\numberwithin{equation}{section}
\newtheorem{theorem}{Theorem}[section]
\newtheorem{lemma}[theorem]{Lemma}
\newtheorem{remark}{Remark}[section]
\newtheorem{corollary}[theorem]{Corollary}
\newcommand{\be}{\begin{equation}}
	\newcommand{\ee}{\end{equation}}
\newcommand\bes{\begin{eqnarray}}
	\newcommand\ees{\end{eqnarray}}
\newcommand{\bess}{\begin{eqnarray*}}
	\newcommand{\eess}{\end{eqnarray*}}
\newcommand\inv{\frak i}
\DeclareMathOperator*{\einf}{ess\,inf}
\DeclareMathOperator{\sgn}{sgn}
\DeclareMathOperator{\cK}{\mathcal K}
\DeclareMathOperator{\cF}{\mathcal F}
\newcommand{\id}{\textnormal{id}}
\newcommand\R{\mathbb{R}}
\author{Omar Cabrera Chavez\footnote{O. Cabrera Chavez was fully supported by DAAD Research Grants - Doctoral Programmes in Germany, 2020/21 (57507871)}}
\title{Chain-structure solutions to a Schrödinger-Poisson system in $\R^3$}
\date{\today}
\begin{document}
	\maketitle
\begin{abstract}
	We prove the existence of ground states and high-energy solutions to the following Schrödinger-Poisson system
	\begin{align*}
		\begin{cases}
			- \Delta u + a(x) u + u v = 0 \\
			\Delta v = u^2
		\end{cases} 
		\quad \text{in } \R^3,
	\end{align*}
	where $a \in L^\infty(\R^3)$ is nonnegative and radially symmetric in the first two variables. Differing from the standard approach, our framework yields \textit{chain-structure solutions}, i.e. solutions periodic in the third variable. A central part of this work is the construction of the Green function of a Poisson problem subject to periodic boundary conditions and we show that its asymptotic profile is tightly related to both the two and three dimensional Poisson problems in the entire space. If the potential $a$ is constant along the third variable, we apply symmetry techniques to construct solutions that have nonvanishing derivative in the third variable.

	\noindent\textbf{Keywords:} Schrödinger-Poisson system, Elliptic partial differential equation, logarithmic Choquard equation, Green function, ground states
	
	\noindent\textbf{Mathematics Subject Classification:} 35J47, 35J20, 35A15, 35J08.
\end{abstract}
	
\section{Introduction}
We consider the following \textit{Schrödinger-Poisson system}
\begin{align}\label{eq:SrPssystem}
\begin{cases}
	- \Delta u + a(x) u + u v = 0 ,\\
	\Delta v = u^2.
\end{cases} 
\text{in } \R^N,
\end{align}
where $a \in L^\infty(\R^N)$. Using the fundamental solution of the Laplace operator $-\Delta$, we may invert the second equation in \eqref{eq:SrPssystem} up to harmonic functions, so we may set
\begin{equation}\label{eq:fundamental_laplace}
	v = (\Psi_N \ast u^2)(x) = \int_{\R^N} \Psi_N(x-y) u^2(y) dy \qquad
	\Psi_N(x) := 
	\begin{aligned}
		\begin{cases}
			\frac{-1}{N(2-N)\omega_N} |x|^{2-N} \quad &\text{for } N \geq 3,\\
			\frac{1}{2\pi} \log|x| \quad &\text{for } N = 2,
		\end{cases}
	\end{aligned}
\end{equation}
which yields the \text{Choquard equation}
\begin{equation}\label{eq:choquard_general}
	- \Delta u + a(x) u + (\Psi_N \ast u^2)u = 0 \quad \text{in } \R^N.
\end{equation}

The equation with $N = 3$ and for $a(x) \equiv a >0$ was introduced by Pekar \cite{p} in 1954 to describe the behavior of polaron at rest, and we find this equation in a number of other applications in Physics, e.g. \cite{lieb,penrose}. We will focus on the variational aspect of this equation in the following.

Our aim is to find \textit{chain-structure} solutions to the system \eqref{eq:SrPssystem} in $\R^3$, i.e. solutions which are periodic in the third variable. More precisely, let $\ell > 0$ and define $\Omega := \R^2 \times (-\ell, \ell)$, then consider the following system subject to periodic boundary conditions.
\begin{align}\label{eq:SP_system_periodic}
	\begin{cases}
		- \Delta u + a(x) u + u v = 0  \quad &\text{in } \Omega,\\
		\Delta v = u^2 \quad &\text{in } \Omega,\\
		u(\cdot,-\ell) = u(\cdot,\ell), \quad \partial_3 u(\cdot,-\ell) = \partial_3 u(\cdot,\ell) \quad &\text{in } \R^2,
	\end{cases} 
\end{align}
where $a \in L^\infty(\Omega)$ and $\einf_{x \in \Omega}a(x) > 0$.

The equation \eqref{eq:choquard_general} has been studied extensively under variant conditions, so we refer the reader to the survey \cite{mvs} and the references therein for detailed overview on the subject. Our focus will be to contrast the differences in the structure of \eqref{eq:SrPssystem} in the logarithmic potential case $N=2$ and the inverse power case $N =3$, and how the periodic system \eqref{eq:SP_system_periodic} shares some qualitative properties with both of them.

The literature for the planar case $N = 2$ is comparatively smaller that the higher dimensional case $N \geq 3$ due to the logarithmic kernel in \eqref{eq:choquard_general} which changes sign and is unbounded. This is a feature that will also be present in this work. In \cite{stubbe} Stubbe works with \eqref{eq:choquard_general} with a constant potential $a(x) \equiv a \in \R$. He overcomes the difficulties involving the logarithmic internal potential by working in a smaller space Hilbert space 
$$X := \{ u \in H^1(\R^2) : \log(1 + | \cdot |) u^2 \in L^1(\R^2) \}.$$
In this space the logarithmic convolution is well-defined. Within this framework he shows that there is a unique positive and radially symmetric ground state for $a > 0$ using symmetric rearrangement inequalities and describes a bifurcation phenomenon for the ground states for negative but bounded values of $a$. Inspired by this variational framework, other results have been produced under different assumptions, e.g. \cite{c,cw,dw,gw,ccw}.

Now, for the higher dimensional case $N \geq 3$, we have the work of Lieb \cite{lieb}, where he proved the existence of a unique ground state to \eqref{eq:choquard_general} using minimization arguments. Lions then showed in \cite{lions} the existence of infinitely many distinct spherically symmetric solutions when $a(x) \geq 0$ and radially symmetric. 
We refer the interested reader to \cite{ak,ccs,fl,mz,mvs2} for more results under different hypotheses.

In these cases, the Newton, Riesz or related positive potentials appear \eqref{eq:fundamental_laplace} in the variational framework but recent works have analyzed the higher dimensional equation with more general unbounded sign-changing potentials. In \cite{gw} we see the equation \eqref{eq:choquard_general} in the case $N \geq 3$, where the fundamental solution $\Psi_N$ is substituted for $\log|\cdot|$. Here the existence of a ground state and a mountain-pass solution is proved. In \cite{bvs} the authors work with general unbounded potentials by working on a relaxed problem arising from a truncation of the positive part of the potential and then passing to the limit in the truncation parameter.

We would like to approach our problem using a similar strategy as the one described above for the system in the full space. Then we need to understand the corresponding problem for the fundamental solution of the periodic Poisson equation
\begin{align}\label{eq:poisson_per}
	\begin{cases}
		\Delta w(x) = \delta(x-y) \quad &x \in \Omega,\\
			w(\cdot,-\ell) = w(\cdot,\ell) \quad &\text{in } \R^2, \\
			\partial_3 w(\cdot,-\ell) = \partial_3 w(\cdot,\ell) \quad &\text{in } \R^2,
	\end{cases}
\end{align}
where $y \in \Omega$. 
Our first important result regards the existence and an asymptotic description of the Green function of \eqref{eq:poisson_per}. Before the statement we introduce some notation.

We will say that two functions $f,g: \R^N \to \R$ are \textit{asymtotically equivalent at infinity}, if
$$ 0 < \lim_{|x| \to \infty} \frac{f(x)}{g(x)} < \infty.$$
For short, we write this as
$$ f(x) \sim g(x) \qquad \text{as } |x| \to \infty.$$
Now, for any function $f : \Omega \to \R$, we denote the periodic extension of $f$ onto $\R^3$ by $\overline{f} : \R^3 \to \R$.
\begin{theorem}\label{thm:asymtoticsGreen}
  There is a function $K$ of class $C^\infty(\Omega_*^2)$, where $\Omega_*^2 =\overline{\Omega} \times \overline{\Omega} \setminus \{ (x,y) \in \Omega :  x = y\}$, with the following properties
  \begin{enumerate}
  \item $K = K_1+ K_2$, where $K_1(x,y) = -\frac{1}{4 \pi |x-y|}$ and $K_2 \in C^{\infty}(\overline{\Omega} \times \overline{\Omega})$ is symmetric in $x$ and $y$, and satisfies
    $$
    K_2(x,y) \sim  \log(1+|x-y|) \quad  \text{as } |x-y| \to \infty.
    $$
    Moreover, the periodic extension $\overline{K} : \R^3 \to \R$ satisfies $\overline{K} \in C^\infty(\R^6_*)$, where $\R^6_* := (\R^3 \times \R^3 )\setminus \{(x,y) \in \R^3 : x = y\}$.

  \item For $f \in C^\infty_c(\Omega)$, the function
    $$
    \cK f : \overline{\Omega} \to \R, \qquad [\cK f](x)= \int_{\Omega} K(x,y)f(y) dy
    $$
    defines a classical solution of
    	\begin{equation*}
    	\begin{cases}
    		\begin{alignedat}{3}
    			\Delta w(x)            &= f(x)             &\quad  &\text{in } \Omega,\\
    			w(\cdot,-\ell)            &= w(\cdot,\ell)             &\quad  &\text{in } \R^2,\\
    			\partial_{3}w(\cdot,-\ell)&= \partial_{3}w(\cdot,\ell) &\quad  &\text{in } \R^2,
    		\end{alignedat}
    	\end{cases}
    \end{equation*}
    and the periodic extension satisfies $\overline{\cK f} \in C^\infty(\R^3)$.

\end{enumerate}
  \end{theorem}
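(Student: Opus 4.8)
The plan is to build $K$ by separating variables in the periodic third coordinate. Let $\phi_k(t):=(2\ell)^{-1/2}e^{i\pi k t/\ell}$, $k\in\mathbb{Z}$, be the orthonormal basis of $L^2(-\ell,\ell)$ consisting of eigenfunctions of $-\partial_t^2$ under periodic boundary conditions, with eigenvalues $\mu_k=(\pi k/\ell)^2$, and let $g_k(x',y')$ be the fundamental solution on $\R^2$ of the screened Laplacian $\Delta_{x'}-\mu_k$: explicitly $g_0(x',y')=\tfrac1{2\pi}\log|x'-y'|$ and, for $k\neq0$, $g_k(x',y')=-\tfrac1{2\pi}K_0\!\bigl(\tfrac{\pi|k|}{\ell}|x'-y'|\bigr)$, where $K_0$ is the order‑zero modified Bessel function of the second kind. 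I put $K_1(x,y):=-\tfrac1{4\pi|x-y|}$, $K_2:=K-K_1$, where
\[
K(x,y):=\sum_{k\in\mathbb{Z}}g_k(x',y')\,\phi_k(x_3)\overline{\phi_k(y_3)}=\frac{\log|x'-y'|}{4\pi\ell}-\frac1{2\pi\ell}\sum_{k=1}^{\infty}K_0\!\Bigl(\tfrac{\pi k}{\ell}|x'-y'|\Bigr)\cos\!\Bigl(\tfrac{\pi k}{\ell}(x_3-y_3)\Bigr).
\]
Formally $K$ is the renormalised periodization $\sum_{n\in\mathbb{Z}}\Psi_3(\cdot-y-2\ell n e_3)$ — the raw sum diverges only through an additive constant, and Poisson summation in the dual variables $n\leftrightarrow k$ turns it into the series above; this is also the structural reason why $K$ inherits the $|x-y|^{-1}$ profile of $\Psi_3$ near the diagonal and, with the $k=0$ mode dominating at infinity, the logarithmic profile of $\Psi_2$.

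First I would read off the regularity of $K$ from the series. From $K_0(r)=-\log r+O(1)$ as $r\to0^+$ and the exponential bounds $|K_0^{(j)}(r)|\le C_j e^{-r}$ for $r\ge1$ and all $j$, the series and all its termwise derivatives converge absolutely and locally uniformly on $\{x'\ne y'\}$, while on $\{x'=y',\ x_3\ne y_3\}$ the tails are controlled by Abel summation using the boundedness of $\sum_{j\le k}\cos\!\bigl(\tfrac{\pi j}{\ell}(x_3-y_3)\bigr)$; hence $K$, hence also its periodic extension $\overline K$, is $C^\infty$ off $\{x'=y'\}$ (each term being $2\ell$‑periodic in $x_3$). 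The crux is the distributional identity
\[
\Delta_x\overline K(\cdot,y)=\sum_{n\in\mathbb{Z}}\delta_{y+2\ell n e_3}\qquad\text{in }\mathcal D'(\R^3),\qquad y\in\Omega,
\]
which I would establish by testing against $\varphi\in C_c^\infty(\R^3)$, splitting $\Delta_x=\Delta_{x'}+\partial_{x_3}^2$, integrating the $x_3$‑part twice by parts against $\phi_k$, using $\phi_k''=-\mu_k\phi_k$ together with $(\Delta_{x'}-\mu_k)g_k=\delta_{y'}$, and closing with Poisson's summation formula $\sum_k\phi_k(x_3)\overline{\phi_k(y_3)}=\sum_n\delta(x_3-y_3-2\ell n)$; the interchange of $\sum_k$ with the integral is justified by the $L^1_{\mathrm{loc}}$ bounds above. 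Since $\Delta_xK_1(\cdot,y)=\delta_y$ on all of $\R^3$, Weyl's lemma then forces $\overline{K_2}(\cdot,y)=\overline K(\cdot,y)-K_1(\cdot,y)$ to be harmonic near $y$ — pinning down the leading singularity of $K$ as exactly $-\tfrac1{4\pi|x-y|}$ — and in fact harmonic, hence smooth, on all of $\R^3\setminus\{y+2\ell n e_3:n\ne0\}$. The same argument in $y$ (equivalently, $(\Delta_x+\Delta_y)\overline{K_2}=0$ off $\bigcup_{n\ne0}\{x-y=2\ell n e_3\}$) upgrades this to joint smoothness, giving $\overline{K_2}\in C^\infty$ off the periodic diagonal and, restricting $x_3,y_3$ to $[-\ell,\ell]$, the asserted $K_2\in C^\infty(\overline\Omega\times\overline\Omega)$. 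Symmetry of $K$, and hence of $K_2$, is immediate from $g_k(x',y')=g_k(y',x')$.

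For the asymptotics: if $|x-y|\to\infty$ with $x,y\in\overline\Omega$ then $|x_3-y_3|\le2\ell$, so $|x'-y'|\to\infty$; the exponential decay of $K_0$ gives $K(x,y)=\tfrac1{4\pi\ell}\log|x'-y'|+O(e^{-c|x'-y'|})$ while $K_1(x,y)\to0$, and since $\log|x'-y'|=\log(1+|x-y|)+O(|x-y|^{-1})$ we get $K_2(x,y)/\log(1+|x-y|)\to\tfrac1{4\pi\ell}\in(0,\infty)$, i.e.\ $K_2(x,y)\sim\log(1+|x-y|)$. For part (2), split $\cK f=\cK_1f+\cK_2f$ with $\cK_if(x)=\int_\Omega K_i(x,y)f(y)\,dy$: $\cK_1f=\Psi_3*(f\mathbf 1_\Omega)$ is the Newtonian potential, so $\cK_1f\in C^\infty(\R^3)$ with $\Delta(\cK_1f)=f$ on $\R^3$, while $\cK_2f$ is $C^\infty$ by differentiation under the integral (smoothness and at‑most‑logarithmic growth of $K_2$, compact support of $f$) with $\Delta_x(\cK_2f)(x)=\int_\Omega\Delta_xK_2(x,y)f(y)\,dy=0$ since $K_2(\cdot,y)$ is harmonic on $\Omega$; hence $\Delta(\cK f)=f$ in $\Omega$. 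Finally, as $y$ ranges over $\operatorname{supp}f\Subset\Omega$ the singular locus $\{x:x-y\in2\ell\mathbb{Z}e_3\}$ stays uniformly away from the hyperplanes $\{x_3\in\ell+2\ell\mathbb{Z}\}$, so $x\mapsto\overline K(x,y)$ is smooth across them and therefore $\overline{\cK f}\in C^\infty(\R^3)$, which is exactly the periodic boundary conditions.

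The main obstacle I anticipate is the identity $\Delta_x\overline K(\cdot,y)=\sum_n\delta_{y+2\ell n e_3}$ and the regularity work around it: the series representing $K$ degenerates along the whole of $\{x'=y'\}$ — in particular on the diagonal — where individual terms blow up, so the singular part cannot be extracted termwise, and one must instead run the distributional computation carefully (justifying the sum–integral interchange and the use of Poisson summation) and then appeal to elliptic regularity; the same delicacy is what makes the borderline convergence estimates of the first step nontrivial. Granting this, the symmetry, the asymptotic equivalence, and part (2) are comparatively routine.
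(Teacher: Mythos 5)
Your proposal is correct in substance, but it follows a genuinely different route from the paper. The paper takes the partial Fourier transform in the planar variables $x'$, solves the resulting periodic ODE in $x_3$ explicitly (the kernel $h_r$), and then analyzes the symbol $-\frac{e^{|\xi'||t|}+e^{|\xi'|(2\ell-|t|)}}{4\pi|\xi'|(e^{2\ell|\xi'|}-1)}$ directly: the exact Newtonian part $-\frac1{4\pi|x-y|}$ is split off via the Hankel identity $\cF_{x'}\bigl(\tfrac1{4\pi|(\cdot,t)|}\bigr)=\tfrac{e^{-|\xi'||t|}}{4\pi|\xi'|}$, and the logarithmic growth comes from the $|\xi'|^{-2}$ singularity of the remaining symbol at $\xi'=0$, interpreted as a renormalized tempered distribution (the $|z|^{-2}_m$ construction) and localized with a cutoff, with all remainders handled by Riemann--Lebesgue and dominated convergence. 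You dualize this: Fourier series in the periodic variable $x_3$, reducing to the two--dimensional screened kernels $g_k$ (a $\log$ for $k=0$, Bessel $K_0$'s for $k\neq0$), and you identify the $-\frac{1}{4\pi|x-y|}$ singularity not by an explicit split of the symbol but a posteriori, from the distributional identity $\Delta_x\overline K(\cdot,y)=\sum_n\delta_{y+2\ell ne_3}$ together with Weyl's lemma. Both routes work. Yours buys an explicit, exponentially convergent representation away from $\{x'=y'\}$, a transparent asymptotic constant $\tfrac1{4\pi\ell}$ in the equivalence $K_2\sim\log(1+|x-y|)$, and it avoids the renormalization of $|\xi'|^{-2}$ entirely; the price is that the regularity and the extraction of $K_1$ rest on justifying distributional/$L^1_{loc}$ convergence of the Bessel series across $\{x'=y'\}$ (which does hold: by scaling, $\int_{|x'-y'|<\varepsilon}K_0(\pi k|x'-y'|/\ell)\,dx'=O(k^{-2})$, so the series converges absolutely in $L^1_{loc}$ and termwise differentiation in $\mathcal D'$ plus Poisson summation is legitimate) and on elliptic regularity rather than an explicit formula for $K_2$; joint smoothness in $(x,y)$ is cleanest if you simply note that your $K$ depends only on $x-y$. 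The paper's route, in exchange, produces the precise Fourier-symbol representation of $K$ that is reused later (Theorems \ref{thm:green2d} and \ref{thm:Green3dim}), which your $K$ would only recover after checking it agrees with the paper's up to the additive normalization.

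One caveat, which your periodization viewpoint actually makes visible rather than hides: $\overline K$ is singular on the whole periodic diagonal $x-y\in 2\ell\mathbb{Z}e_3$, $x'=y'$, not just at $x=y$, and this set meets $\overline\Omega\times\overline\Omega$ at boundary configurations $x'=y'$, $\{x_3,y_3\}=\{-\ell,\ell\}$; there $K_2=K-K_1$ blows up, so smoothness of $K_2$ "on $\overline\Omega\times\overline\Omega$" holds only away from these points. This is a defect of the statement shared by the paper's own argument (which analyzes $\tilde K_2(x-y)$ only for third components in $[-\ell,\ell]$ before translating), so it does not count against you; for part (2) it is harmless, exactly as you argue, because for $y$ in the compact support of $f$ the image singularities stay at uniform distance from $\overline\Omega$.
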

Thus, we see that the Green function $K$ for the periodic Poisson problem behaves asymptotically like the fundamental solution of the Poisson equation in the $2$-dimensional case at infinity and like its $3$-dimensional counterpart at $0$. Moreover, to contrast the asymptotic logarithmic profile of $K$ at infinity, we note that the Green function corresponding to the Dirichlet version of \eqref{eq:poisson_per} decays to $0$ at infinity as a consequence of the maximum principle.

With this description of the Green function at hand, we prove the following result
\begin{theorem}\label{thm:wsolution}
	Let $u \in H^1(\Omega)$ satisfy
	$$ \int_\Omega \log(1+|x|)u^2(x) dx < \infty, $$
	then the function
	$$ w(x): = \cK [u^2](x) = \int_{\Omega} K(x,y)u^2(y) dy, $$
	is a distributional solution (see \eqref{eq:dist_solution} below) to 
	$$  \Delta w(x) = u^2(x) \qquad \text{in } \Omega,$$
	belonging to $W^{3,3/2}_{loc}(\Omega) \cap C_{loc}^{0,\alpha}(\Omega)$ for all $\alpha \in (0,1)$. Moreover it satisfies
	 $$ w(x) \sim \log(1+|x|) \quad \text{ as } |x| \to \infty, \, x \in \Omega.$$
	and the periodic extension $\overline{w} : \R^3 \to \R$ belongs to the space $C^{0,\alpha}_{loc}(\R^3)$ for all $\alpha \in (0,1)$.
\end{theorem}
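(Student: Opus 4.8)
The plan is to use the splitting $K=K_1+K_2$ from Theorem~\ref{thm:asymtoticsGreen} and to analyse the two pieces $w_i(x):=\int_\Omega K_i(x,y)u^2(y)\,dy$ of $w=w_1+w_2$ separately. The integrability we may rely on is: since $\Omega=\R^2\times(-\ell,\ell)$ supports the Sobolev embedding $H^1(\Omega)\hookrightarrow L^6(\Omega)$, we have $u^2\in L^1(\Omega)\cap L^3(\Omega)$ and, from $\nabla(u^2)=2u\nabla u\in L^{3/2}_{loc}(\Omega)$, also $u^2\in W^{1,3/2}_{loc}(\Omega)$; by hypothesis $\int_\Omega\log(1+|y|)u^2(y)\,dy<\infty$. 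Beyond Theorem~\ref{thm:asymtoticsGreen} the one estimate I need is the global bound $|K_2(x,y)|\le C\log(2+|x-y|)$ on $\overline\Omega\times\overline\Omega$: for $|x-y|$ large this is the stated asymptotic equivalence, and for $|x-y|$ bounded it follows from the continuity of $K_2$ together with the invariance of \eqref{eq:poisson_per} under translations in the first two coordinates, which forces $K_2(x+z,y+z)=K_2(x,y)$ for $z\in\R^2\times\{0\}$ and hence reduces $\{|x-y|\le R\}$ to a compact subset of $\overline\Omega\times\overline\Omega$. With this, splitting the $K_1$--kernel according to whether $|x-y|<1$ or $|x-y|\ge1$, and using $\log(2+|x-y|)\le\log(2+|x|)+\log(1+|y|)$ for $K_2$, one checks that $\int_\Omega|K(x,y)|u^2(y)\,dy$ is finite and locally bounded in $x\in\overline\Omega$, so $w\in L^\infty_{loc}(\overline\Omega)$ is well defined.

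For the distributional equation and interior regularity I would approximate: choose $f_n\in C^\infty_c(\Omega)$ with $f_n\to u^2$ in $L^3_{loc}(\Omega)\cap L^1(\Omega)$ and in $L^1(\Omega;\log(1+|y|)\,dy)$ (truncation plus mollification). By Theorem~\ref{thm:asymtoticsGreen}(2), $\cK f_n$ is a classical solution of the periodic Poisson problem with datum $f_n$, and the very estimates used above give $\cK f_n\to w$ uniformly on compact subsets of $\overline\Omega$. Passing to the limit in $\int_\Omega(\cK f_n)\,\Delta\varphi\,dx=\int_\Omega f_n\varphi\,dx$ --- where any boundary contributions at $x_3=\pm\ell$ vanish, either because the test function has compact support in $\Omega$ or, for periodic test functions, by the cancellation coming from the periodic boundary conditions satisfied by $\cK f_n$ --- yields $\int_\Omega w\,\Delta\varphi\,dx=\int_\Omega u^2\varphi\,dx$, i.e.\ $w$ solves $\Delta w=u^2$ distributionally in $\Omega$. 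Since $u^2\in W^{1,3/2}_{loc}(\Omega)$, the Calder\'on--Zygmund estimates upgrade this to $w\in W^{3,3/2}_{loc}(\Omega)$, and the Morrey embedding in dimension $3$ gives $w\in C^{0,\alpha}_{loc}(\Omega)$ for every $\alpha\in(0,1)$.

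For the asymptotic profile: on $\{|y|\le|x|/2\}$ one has $|x-y|\ge|x|/2$, so that part of $|w_1(x)|$ is controlled by $\tfrac{2}{|x|}\|u^2\|_{L^1(\Omega)}$, while on $\{|y|>|x|/2\}$ the local $L^{3/2}$--bound on $|x-y|^{-1}$ together with the vanishing of the $L^3$-- and $L^1$--tails of $u^2$ forces $w_1(x)\to0$ as $|x|\to\infty$, $x\in\Omega$; in particular $w_1(x)/\log(1+|x|)\to0$. For $w_2$ I would write $w_2(x)/\log(1+|x|)=\int_\Omega\big(K_2(x,y)/\log(1+|x|)\big)u^2(y)\,dy$ and invoke dominated convergence: for fixed $y$, $|x-y|\to\infty$, so $K_2(x,y)/\log(1+|x-y|)\to c_\infty:=\lim_{|x-y|\to\infty}K_2(x,y)/\log(1+|x-y|)\in(0,\infty)$, while $\log(1+|x-y|)/\log(1+|x|)\to1$ because $\big||x-y|-|x|\big|\le|y|$, so the integrand tends pointwise to $c_\infty u^2(y)$; and for $|x|\ge2$ the integrand is dominated by $C(1+\log(1+|y|))u^2(y)$ by the global bound on $K_2$, which lies in $L^1(\Omega)$ by the weighted hypothesis. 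Hence $w_2(x)/\log(1+|x|)\to c_\infty\|u\|_{L^2(\Omega)}^2$, a finite positive number whenever $u\not\equiv0$, and therefore $w(x)\sim\log(1+|x|)$ as $|x|\to\infty$, $x\in\Omega$.

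Finally, for the periodic extension: since $f_n\in C^\infty_c(\Omega)$, Theorem~\ref{thm:asymtoticsGreen}(2) gives $\overline{\cK f_n}\in C^\infty(\R^3)$ with $\Delta\overline{\cK f_n}=\overline{f_n}$ classically on $\R^3$. The locally uniform convergence $\cK f_n\to w$ on $\overline\Omega$ periodizes to $\overline{\cK f_n}\to\overline w$ in $L^1_{loc}(\R^3)$, and $\overline{f_n}\to\overline{u^2}$ in $L^1_{loc}(\R^3)$ since $f_n\to u^2$ in $L^1(\Omega)$; passing to the limit in the distributional identity over $\R^3$ gives $\Delta\overline w=\overline{u^2}\in L^3_{loc}(\R^3)$ in $\mathcal D'(\R^3)$. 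As this equation now holds across the former boundary on all of $\R^3$, interior elliptic regularity gives $\overline w\in W^{2,3}_{loc}(\R^3)\hookrightarrow C^{0,\alpha}_{loc}(\R^3)$ for every $\alpha\in(0,1)$. I expect the genuine difficulty to be concentrated in the asymptotic step --- specifically in establishing the uniform majorant for $K_2$ on all of $\overline\Omega\times\overline\Omega$, which the bare asymptotic equivalence does not by itself supply and which must draw either on the translation invariance of \eqref{eq:poisson_per} or on a sharper bound extracted in the proof of Theorem~\ref{thm:asymtoticsGreen}; a secondary, more bookkeeping-heavy point is the careful treatment of the periodic boundary contributions when passing from the classical approximants $\cK f_n$ to $w$.
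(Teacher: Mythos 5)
Your estimates, asymptotics and regularity steps essentially reproduce the paper's own: the global bound $|K_2(x,y)|\le C\log(2+|x-y|)$ you single out as the key input is exactly what the paper records in \eqref{eq:Gasymptotic} combined with the translation identity $K(x,y)=K(x-y,0)$ of Corollary \ref{corollary:translation_K} (so the uniformity worry disappears, as you anticipated); your splitting of $w_1$, the dominated convergence argument for $w_2(x)/\log(1+|x|)$, and the chain $u^2\in W^{1,3/2}_{loc}(\Omega)$, $w\in W^{3,3/2}_{loc}(\Omega)\hookrightarrow C^{0,\alpha}_{loc}(\Omega)$ coincide with Lemma \ref{lemma:convolution} and the paper's proof. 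Your passage to $\Delta\overline w=\overline{u^2}$ in $\mathcal{D}'(\R^3)$ followed by interior regularity is a clean alternative to the paper's translation argument based on $T_\tau$ and Corollary \ref{corollary:translation_K}.

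The genuine divergence, and the genuine gap, is in how you obtain the distributional identity. You derive $\Delta w=u^2$ by approximating $u^2$ with $f_n\in C^\infty_c(\Omega)$ and invoking item 2 of Theorem \ref{thm:asymtoticsGreen}. But in the paper item 2 is not available at this stage: it is proved only after Theorem \ref{thm:wsolution}, as a corollary, precisely by running the argument of Theorem \ref{thm:wsolution} with $f$ in place of $u^2$. So your route is circular relative to the paper's logical structure; to make it self-supporting you would need an independent proof that $\cK f$ solves the periodic problem for smooth compactly supported $f$, and the natural such proof is the direct Fourier-analytic verification the paper performs (using the distributional transform \eqref{eq:distributiondefinition}, the ODE Green function of Lemma \ref{lemma:greenode}, and Remark \ref{remark:test}) --- at which point that computation applies verbatim to $u^2$ and your approximation layer becomes superfluous. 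A secondary, fixable point: the test functions in \eqref{eq:dist_solution} belong to $\mathcal{S}_p(\Omega)$ and are not compactly supported in $x'$, so passing to the limit in $\int_\Omega(\cK f_n)\,\Delta\varphi\,dx$ requires a growth bound $|\cK f_n(x)|\le C\bigl(1+\log(1+|x|)\bigr)$ uniform in $n$ (available from your estimates provided the truncation--mollification keeps the weighted and Lebesgue norms of $f_n$ bounded), not merely locally uniform convergence, and the same global control is what justifies the integration by parts for the approximants against such non-compactly supported periodic test functions.
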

Now, we may continue our initial idea and plug 
$$w(x) = \cK [u^2](x) = \int_\Omega K(x,y)u^2(y) dy,$$ 
back in \eqref{eq:SP_system_periodic} which yields the following Choquard-type equation subject to periodic boundary conditions
\begin{equation}\label{eq:periodic_choquard}
\begin{cases}
		- \Delta u + a(x) u + \cK [u^2]u = 0 \quad &x \in \Omega,\\
	u(\cdot,-\ell) = u(\cdot,\ell), \quad \partial_3 u(\cdot,-\ell) = \partial_3 u(\cdot,\ell) \quad &\text{in } \R^2.
\end{cases}
\end{equation}
Then, at least formally, we associate to this equation the energy functional
\begin{equation}\label{eq:energy_functional}
	\Phi(u) := \frac{1}{2} \int_{\Omega} |\nabla u|^2 + a(x) u^2 dx +  \frac{1}{4} \int_{\Omega} \int_{\Omega} K(x,y) u^2(x) u^2(y) dx dy. 
\end{equation}
From the asymptotic description $\cK [u^2](x) \sim \log(1 +|x|)$ as $|x| \to \infty$ we can see that the double integral in \eqref{eq:energy_functional} is in general not well defined in the usual Sobolev space $H^1(\Omega)$, as was the case in the planar Choquard equation. Then, any variational framework has to accommodate to the sign-changing internal potential $K$ which is unbounded on both sides as well. To overcome this, we follow Stubbe's approach, see \cite{stubbe}, and carry out our analysis in the subspace\footnote{Here $H^1_p(\Omega)$ represents the Sobolev space subject to periodic boundary conditions $u(\cdot,-\ell) = u(\cdot,\ell)$ in the sense of traces, see \eqref{eq:Sobolev_periodic} below.}
$$ X := \left\{ u \in H^1_p(\Omega) : \int_{\Omega} \log(1+|x|) u^2(x) dx < \infty \right\}. $$
Proceeding in this way, we will see that $\Phi$ is well-defined in $X$ and, as expected, critical points of $\Phi$ correspond to weak solutions to \eqref{eq:periodic_choquard}. Within this framework we will obtain the existence of ground states and high-energy solutions based on Ljusternik-Schnirelmann theory applied to the restriction of $\Phi$ to the Nehari manifold
$$ \mathcal{N} := \{ u \in X \setminus \{0\} : \Phi'(u)u = 0\}.$$

For our existence results we focus on the radially symmetric case. We introduce the following subspace
$$ X_r := \left\{u \in X : u(Ax',x_3) = u(x',x_3) \quad \text{ a.e. in } \Omega \text{ for all } A \in O(2)\right\}. $$
Its elements are radially symmetric functions in the $x'$-variable. Further, we denote $\mathcal{N}_r := \mathcal{N} \cap X_r$, which contains all critical points that are radially symmetric in the $x'$-variable.

Now, we introduce the notion of \textit{radial ground state}, i.e. a function $u_r \in X_r$ satisfying $\Phi'(u_r) = 0$ and 
$$ \Phi(u_r) = \inf \{ \Phi(u) : u \in X_r \setminus \{0\}, \, \Phi'(u) = 0\} =: c_r $$

Our first result regards the existence of radial ground states and high energy solutions to \eqref{eq:periodic_choquard}
\begin{theorem}\label{thm:high_energy_radial}
	Let $a \in L^{\infty}(\Omega)$ satisfy $\einf_{\Omega}a > 0$ and
	$$	a(Ax',x_3) = a(x', x_3) \quad \text{ a.e. in } \Omega \text{ for all } A \in O(2).$$
	Then \eqref{eq:periodic_choquard} admits a sequence of solution pairs $\{\pm u_n\}_n \subset X_r$ such that $\Phi(\pm u_n) \to \infty$ as $n \to \infty$ and $\pm u_1$ satisfies
	$$ \Phi(\pm u_1) = c_r.$$ 
	Moreover, if the periodic extension satisfies $\overline{a} \in C^{0,\alpha}_{loc}(\R^3)$, then the periodic extensions $\{\pm \overline{u}_n\}_n$ are of class $C^{2,\alpha}_{loc}(\R^3)$.
\end{theorem}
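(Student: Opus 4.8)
The plan is to realize the solutions as constrained critical points of $\Phi$ on the Nehari manifold $\mathcal N_r = \mathcal N \cap X_r$, and then apply the symmetric Ljusternik--Schnirelmann machinery together with the principle of symmetric criticality. First I would record the analytic setup: one checks that $\Phi \in C^1(X,\R)$, using Theorem \ref{thm:wsolution} (more precisely the bound $\cK[u^2](x) \sim \log(1+|x|)$ together with the defining weight of $X$) to see that the quartic term $\int_\Omega\int_\Omega K(x,y)u^2(x)u^2(y)\,dx\,dy$ is finite and differentiable on $X$. Splitting $K = K_1 + K_2$ as in Theorem \ref{thm:asymtoticsGreen}, the $K_1$-part is the (negative) $3$-dimensional Riesz term, which is controlled by Hardy--Littlewood--Sobolev on bounded pieces and is bounded below modulo the $H^1$-norm; the $K_2$-part, since $K_2(x,y)\lesssim \log(1+|x-y|)\lesssim \log(1+|x|)+\log(1+|y|)$, is estimated by the $X$-weight. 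This gives coercivity of $\Phi$ on $\mathcal N_r$ and shows $c_r := \inf_{\mathcal N_r}\Phi > 0$. Since $\einf_\Omega a > 0$, the quadratic form $\|u\|^2 := \int_\Omega |\nabla u|^2 + a u^2$ is an equivalent norm on $H^1_p(\Omega)$, so for every $u \in X_r\setminus\{0\}$ there is a unique $t(u) > 0$ with $t(u)u \in \mathcal N_r$ (the map $t \mapsto \Phi(tu)$ is strictly increasing then strictly decreasing after the fibering analysis), and $\mathcal N_r$ is a $C^1$ manifold on which critical points of $\Phi|_{\mathcal N_r}$ are genuine critical points of $\Phi|_{X_r}$.

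The next step is compactness. Here I would invoke the compact embedding of the radial-in-$x'$ Sobolev space: functions in $X_r$ are radially symmetric in $x' \in \R^2$, hence a Strauss-type lemma gives $X_r \hookrightarrow L^p(\Omega)$ compactly for the relevant subcritical exponents $p$ (this is where the $O(2)$-symmetry is essential — it kills the translation invariance in $x'$ that otherwise destroys compactness; translations in $x'$ are the only noncompact direction since $x_3$ lives on a bounded interval). Combined with the $\log(1+|x|)$-weight controlling the tails, one obtains that Palais--Smale sequences for $\Phi|_{\mathcal N_r}$ are bounded and converge strongly along a subsequence, i.e. $\Phi|_{\mathcal N_r}$ satisfies the Palais--Smale condition. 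The functional is even, $\mathcal N_r$ is symmetric, and $c_r$ is attained, giving the ground-state pair $\pm u_1$ with $\Phi(\pm u_1) = c_r$. For the high-energy solutions, I would use the genus (or an equivalent $\Z/2$-index): since $X_r$ is infinite-dimensional one constructs, for each $n$, a compact symmetric subset of $\mathcal N_r$ of genus $\geq n$ (e.g. by intersecting $\mathcal N_r$ with an $n$-dimensional subspace of $X_r$ via the fibering map $t(\cdot)$), and defines the minimax values $c_n := \inf_{\gamma(B)\geq n}\sup_{u\in B}\Phi(u)$. Standard deformation arguments under Palais--Smale yield critical values $c_1 = c_r \leq c_2 \leq \cdots$, and coercivity forces $c_n \to \infty$; a multiplicity argument (distinct critical values, or the standard trick when values coincide) produces the sequence of solution pairs $\{\pm u_n\}$ with $\Phi(\pm u_n) \to \infty$. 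Throughout, the principle of symmetric criticality (the $O(2)$-action on $X$ is isometric and $\Phi$ is invariant) guarantees these critical points of $\Phi|_{X_r}$ solve \eqref{eq:periodic_choquard} in the full space $X$.

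Finally, for the regularity statement, suppose $\overline a \in C^{0,\alpha}_{loc}(\R^3)$. A weak solution $u \in X$ of \eqref{eq:periodic_choquard} lies in $H^1_p(\Omega)$, hence in $L^p_{loc}$ for all $p<\infty$ by Sobolev; Theorem \ref{thm:wsolution} gives $w = \cK[u^2] \in C^{0,\alpha}_{loc}(\Omega)$ with periodic extension in $C^{0,\alpha}_{loc}(\R^3)$. Then $-\Delta u = -(a + w)u$ with right-hand side in $L^p_{loc}$, so elliptic $L^p$-regularity bootstraps $u$ to $W^{2,p}_{loc}$, hence (Sobolev embedding) to $C^{1,\beta}_{loc}$ for all $\beta\in(0,1)$; consequently $(a+w)u \in C^{0,\alpha}_{loc}$ and Schauder estimates upgrade $u$ to $C^{2,\alpha}_{loc}$. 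The periodicity and the matching of $u$ and $\partial_3 u$ at $x_3 = \pm\ell$ ensure that the periodic extension $\overline u$ solves the equation across the gluing planes, so the interior Schauder estimate applies on all of $\R^3$ and $\overline u \in C^{2,\alpha}_{loc}(\R^3)$; the evenness then transfers to $\pm\overline u_n$. The main obstacle I anticipate is the compactness step: one must verify carefully that the radial-in-$x'$ symmetry genuinely restores a compact embedding on the unbounded cylinder $\Omega = \R^2\times(-\ell,\ell)$ and that the logarithmic weight is compatible with this — in particular that Palais--Smale sequences cannot leak mass to spatial infinity in $x'$ despite the sign-changing, unbounded kernel $K$; controlling the $K_1$ (Riesz, negative) and $K_2$ (logarithmic, positive) contributions simultaneously in the energy is the delicate point.
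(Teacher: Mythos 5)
Your overall strategy (Nehari constraint, genus-based Ljusternik--Schnirelmann minimax, symmetric criticality, bootstrap regularity) is the same as the paper's, but two steps as you state them contain genuine gaps. First, the claim that every $u\in X_r\setminus\{0\}$ admits a unique $t(u)>0$ with $t(u)u\in\mathcal{N}_r$ (``increasing then decreasing'' fiber maps) is false here: the kernel is sign-changing, with $K_1<0$ near the diagonal but $K_2\sim\log(1+|x-y|)>0$ at infinity, so $V_0(u)$ may be nonnegative, in which case $f_u(t)=\tfrac{t^2}{2}\|u\|_a^2+\tfrac{t^4}{4}V_0(u)$ is strictly increasing and the ray $\{tu\}$ never meets $\mathcal{N}$; the projection exists only when $V_0(u)<0$ (Lemma \ref{lemma:Nehariproperties}(iv)). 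Consequently your construction of compact symmetric sets of genus $\geq n$ by projecting an arbitrary $n$-dimensional subspace of $X_r$ breaks down: nothing guarantees $V_0<0$ on such a subspace, so the admissible minimax classes could be empty or the values infinite. The missing idea is the paper's: choose $n+1$ test functions with pairwise disjoint supports inside a ball of radius so small that $K<0$ there (possible by Theorem \ref{thm:asymtoticsGreen}); then every nontrivial linear combination has $V_0<0$ and can be projected onto $\mathcal{N}_r$, which is what makes the classes nonempty and $c_k<\infty$. Relatedly, ``coercivity forces $c_n\to\infty$'' is not an argument; the paper obtains $c_k\to\infty$ by a deformation--plus--genus-subadditivity contradiction that uses the $(PS)_c$ condition and compactness of the critical set.

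Second, the compactness step is not resolved by a Strauss-type embedding. The compact embedding into $L^s(\Omega)$, $s\in[2,6)$, already holds for the full weighted space $X$ (Lemma \ref{lemma:propertiesI}(i)), radial symmetry or not; the real difficulty, which you flag but do not close, is that a Palais--Smale sequence on $\mathcal{N}_r$ is a priori bounded only in $H^1$ (since $\Phi=\tfrac14\|\cdot\|_a^2$ on the constraint), not in $X$, so no compact embedding applies directly and the convergence of the $|\cdot|_*$-part must be produced by hand. The paper's mechanism is Lemma \ref{lemma:B2lemma}: boundedness (resp.\ vanishing) of $B_2(u_n^2,u_n^2)$ together with a pointwise nonvanishing limit yields boundedness (resp.\ convergence) of $|u_n|_*$; and the $O(2)$-symmetry enters not through a compact radial embedding but through Lemma \ref{lemma:H1boundedsequences}, where the rotation trick (producing $m$ disjoint copies of a concentration ball, each carrying mass $\mu$) rules out escape of the profile by $x'$-translations. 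These ingredients, combined with Lemma \ref{lemma:weakconvergence} and the lower bound \eqref{eq:B2_lower_bound}, give strong $X$-convergence of PS sequences. Your regularity bootstrap coincides with the paper's Theorem \ref{thm:solsystem} and is fine.
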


Now we consider the particular case $ \partial_3 a \equiv 0$. Here, we write $a(x',x_3) = a(x')$ for simplicity. In this case, we have the solutions to \eqref{eq:SPsystem} arising from the planar Choquard equation
$$ -\Delta u + a(x') u + (\log|\cdot| \ast u^2) u = 0 \qquad x' \in \R^2, $$
by extending as a constant function of $x_3$. Conversely, a solution to \eqref{eq:periodic_choquard} with vanishing $x_3$-derivative yields a solution to the planar equation by fixing any value $x_3 \in [-\ell, \ell]$, see Section \ref{constant_potentials} below. Therefore we are interested in finding solutions for \eqref{eq:periodic_choquard} such that $\partial_3 u \not \equiv 0$ a.e. in $\Omega$. We will call such a solution a \textit{fully nontrivial}. Then, we have the following
\begin{theorem}\label{thm:fully_nontrivial}
	Let $\partial_3 a \equiv 0$, $\einf_{\Omega} a > 0$ and
	$$ a(Ax',x_3) = a(x',x_3) \quad \text{a.e. in } \Omega \text{ for all } A \in O(2). $$
	Then, there is $\ell_* > 0$ such that, for all $\ell > \ell_*$, if $u_g \in X \setminus \{0\}$ satisfies
	$$ \Phi'(u_g) = 0 \quad \text{and} \quad \Phi(u_g) = c_r,$$
	then $u_g$ is fully nontrivial.
\end{theorem}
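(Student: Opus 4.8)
The strategy is a comparison of energy levels. Let $c$ denote the *full* ground-state level over $X_r$ (which by Theorem \ref{thm:high_energy_radial} is attained by $u_g$, with $\Phi(u_g)=c_r$), and let $c^{\mathrm{2D}}$ be the ground-state level of the planar Choquard functional
$$ \Phi_{\mathrm{2D}}(v) := \frac{1}{2} \int_{\R^2} |\nabla v|^2 + a(x') v^2\, dx' + \frac{1}{4} \int_{\R^2}\int_{\R^2} \log|x'-y'|\, v^2(x')v^2(y')\, dx'\, dy' $$
on its own Stubbe-type space of radial functions, which is attained by some $v_0$ (Stubbe's result, \cite{stubbe}). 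The key point is: any solution of \eqref{eq:periodic_choquard} with $\partial_3 u \equiv 0$ restricts to a planar solution, so \emph{if $u_g$ were not fully nontrivial}, then $\Phi(u_g)$ would have to equal $2\ell$ times a planar critical value, hence $\Phi(u_g) \geq 2\ell\, c^{\mathrm{2D}}$. I would prove the contradiction by showing that for $\ell$ large one has $c_r = \Phi(u_g) < 2\ell\, c^{\mathrm{2D}}$.

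To get the upper bound on $c_r$, I would construct a competitor in $X_r$ that genuinely depends on $x_3$ and has energy strictly below $2\ell\, c^{\mathrm{2D}}$. A natural Ansatz is a single-bump function concentrated near $x_3 = 0$, e.g. $u(x',x_3) = \chi(x_3)\, v_0(x')$ with $\chi$ a fixed smooth cutoff supported in $(-1,1)$ (independent of $\ell$), suitably rescaled on the Nehari manifold $\mathcal N_r$. For such a $u$ the local (gradient plus $a(x)u^2$) part of $\Phi$ is $O(1)$ in $\ell$, and using Theorem \ref{thm:asymtoticsGreen} — namely $K = K_1 + K_2$ with $K_1 = -\tfrac{1}{4\pi|x-y|}$ and $K_2(x,y)\sim\log(1+|x-y|)$ — the double integral picks up a dominant contribution of order $\log \ell$ from the $K_2$ part when the two arguments are separated by distances up to $\ell$ (the periodic geometry makes the effective separation $O(\ell)$), while the $K_1$ part contributes only $O(1)$. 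After projecting onto $\mathcal N_r$ (which costs a bounded multiplicative factor depending only on the sign and size of these quantities), this yields $c_r \leq C_1 + C_2 \log \ell$ for constants independent of $\ell$. Since $2\ell\, c^{\mathrm{2D}}$ grows linearly in $\ell$ with $c^{\mathrm{2D}} > 0$ fixed, there is $\ell_*$ with $C_1 + C_2\log\ell < 2\ell\, c^{\mathrm{2D}}$ for all $\ell > \ell_*$; choosing this $\ell_*$ finishes the argument. I would make the converse direction (a solution with $\partial_3 u \equiv 0$ descends to a planar solution, and its energy is $2\ell$ times the planar energy) precise using the discussion promised for Section \ref{constant_potentials}; the only subtlety is checking that the planar critical value of the restriction is no smaller than $c^{\mathrm{2D}}$, which is immediate since $c^{\mathrm{2D}}$ is the \emph{infimum} over all nonzero planar critical points.

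The main obstacle I anticipate is the careful bookkeeping of the double-integral term for the competitor: one must confirm that, despite $K_2$ being sign-changing and unbounded, the leading behavior in $\ell$ is genuinely $+C\log\ell$ with $C$ strictly positive and that no cancellation with the negative $K_1$ piece destroys this — this requires the precise asymptotics from Theorem \ref{thm:asymtoticsGreen} rather than mere qualitative bounds. A secondary technical point is that the Nehari projection is well-defined and continuous for the competitor uniformly in $\ell$; this should follow from the structure of $\Phi$ on $\mathcal N_r$ established en route to Theorem \ref{thm:high_energy_radial}, but it needs to be stated. Once these two estimates are in place, the comparison $c_r < 2\ell\, c^{\mathrm{2D}}$ and hence the full nontriviality of $u_g$ follow directly.
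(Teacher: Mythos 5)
Your overall skeleton is the same as the paper's: if $u_g$ had $\partial_3 u_g\equiv 0$ it would restrict to a planar radial critical point (Lemma \ref{lemma:solconstant}), so $\Phi(u_g)=2\ell\,\Psi(u_g)\ge 2\ell\kappa$ with $\kappa>0$ the planar radial ground level, and one wins by showing that $c_r$ grows more slowly than linearly in $\ell$. The lower-bound half of your argument is essentially the paper's. The gap is in your mechanism for the upper bound on $c_r$. For a competitor of the form $\chi(x_3)v_0(x')$ (or any fixed function concentrated near $x_3=0$), the two arguments of the double integral range over a region of diameter $O(1)$, \emph{not} $O(\ell)$: the integral is taken over a single period $\Omega_\ell\times\Omega_\ell$, so no periodic images enter, and the asymptotics $K_2(x,y)\sim\log(1+|x-y|)$ is a statement as $|x-y|\to\infty$ at \emph{fixed} $\ell$, with a normalization that degenerates like $1/\ell$ (compare Theorem \ref{thm:green2d}, where the planar kernel appears with the factor $\tfrac{1}{2\ell}$). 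Consequently there is no $+C\log\ell$ contribution; in fact Theorem \ref{thm:Green3dim} shows that the whole double integral converges, as $\ell\to\infty$, to the strictly \emph{negative} Newtonian energy $-\tfrac{1}{4\pi}\int_{\R^3}\int_{\R^3}\frac{\varphi^2(x)\varphi^2(y)}{|x-y|}\,dx\,dy$.

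This is not merely a quantitative slip: your bookkeeping of the Nehari projection breaks down if your claim were true. If $V_0(u)>0$ (as a dominant positive $\log\ell$ term would force), the fiber map $t\mapsto\frac{t^2}{2}\|u\|_a^2+\frac{t^4}{4}V_0(u)$ has no nonzero critical point, so the competitor cannot be projected onto $\mathcal N_r$ at all; and when $V_0(u)<0$ the projected energy is exactly $\|u\|_a^4/\bigl(4|V_0(u)|\bigr)$ (Lemma \ref{lemma:Nehariproperties}(iv)), not the original energy times a bounded factor. The correct route — and the one the paper takes — is to fix $\varphi\in C^\infty_c(\R^3)\cap X_r$ independent of $\ell$, invoke Theorem \ref{thm:Green3dim} to get $V_0(\varphi)\to-\mu/(4\pi)<0$, conclude that $t_\ell$ stays bounded and $\Phi_\ell(t_\ell\varphi)\to\pi\|\varphi\|_a^4/\mu$, hence $c_r\le 2\pi\|\varphi\|_a^4/\mu$ uniformly in large $\ell$; this $\ell$-independent bound beats $2\ell\kappa$ once $\ell$ is large. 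If you replace your $\log\ell$ estimate by this convergence statement, the rest of your argument goes through.
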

In this result, the minimizing property of the radial ground state is key to show that $u_g$ is fully nontrivial and solutions at higher energy levels might not have this property anymore.

To overcome this issue and produce fully nontrivial solutions for all $\ell > 0$ we introduce further symmetries to our setting. We consider the following isometric involution in the space $X$
$$ \inv(u)(x) := -\overline{u}(x',x_3-\ell) \quad x \in \Omega, $$
where $\overline{u}:\R^3 \to \R$ denotes the periodic extension of $u$.

Now, we introduce the following subspace of functions invariant under the action of $G := O(2) \times \{id,\inv\}$
$$ X_G := \{ u \in X : \inv(u) = u  \text{ and } u(Ax',x_3) = u(x) \text{ a.e. in } \Omega \text{ for all } A \in O(2)\}. $$
We have the corresponding notion of \textit{$G$-invariant ground state}, i.e. a function $u_g \in X_g$ such that $\Phi'(u_g) = 0$ and
$$ \Phi(u_g) = \inf \{ \Phi(u) : u \in X_G \setminus \{0\}, \, \Phi'(u) = 0\} =: c_G.$$
The advantage of working in this setting, is that a nontrivial critical point $u \in X_G$ cannot be constant along the $x_3$-variable, since $\overline{u}(x',x_3 - \ell) = -u(x',x_3)$ for all $x \in \Omega$. Thereby this issue is taken care of by the functional setting. Proceeding in this way, we obtain the following

\begin{theorem}\label{thm:high_energy_nontrivial}
Let $\partial_3 a \equiv 0$, $\einf_{\Omega} a > 0$ and
$$ a(Ax',x_3) = a(x',x_3) \quad \text{a.e. in } \Omega \text{ for all } A \in O(2). $$
Then \eqref{eq:periodic_choquard} admits a sequence of fully nontrivial solution pairs $\{\pm u_n\}_n \subset X_G$ such that $\Phi(\pm u_n) \to \infty$ as $n \to \infty$ and $\pm u_1$ satisfies
	$$ \Phi(\pm u_1) = c_G.$$ 
Moreover, if the periodic extension satisfies $\overline{a} \in C^{0,\alpha}_{loc}(\R^3)$, then the periodic extensions $\{\pm \overline{u}_n\}_n$ are of class $C^{2,\alpha}_{loc}(\R^3)$.
\end{theorem}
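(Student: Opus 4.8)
The argument runs parallel to the proof of Theorem~\ref{thm:high_energy_radial}, so I would only spell out the points that change when the symmetry subspace $X_r$ is replaced by $X_G$, the essentially new ingredient being the involution $\inv$. First I would verify that $G = O(2)\times\{\id,\inv\}$ is a compact Lie group acting on the Hilbert space $X$ by linear isometries and that $\Phi$ is $G$-invariant. The $O(2)$-action preserves the $H^1_p$-norm and the weight $\log(1+|x|)$, and it fixes the kernel $K$ because the periodic Poisson problem \eqref{eq:poisson_per} is invariant under rotations of the $x'$-variable, so that $K(Ax',x_3,Ay',y_3) = K(x',x_3,y',y_3)$; this may also be read off from the decomposition $K = K_1 + K_2$ of Theorem~\ref{thm:asymtoticsGreen}. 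The involution $\inv$ is an isometry since the periodic extension preserves the Dirichlet and $L^2$ energies over one period while the sign change is irrelevant for squared quantities. For the nonlocal term one uses that the periodic extension $\overline K$ is invariant under the simultaneous translation $(x,y)\mapsto (x+\ell e_3,\,y+\ell e_3)$ --- a consequence of the translation invariance of $\Delta$ together with the $2\ell$-periodicity of the problem --- so that, after the change of variables $x_3\mapsto x_3+\ell$, $y_3\mapsto y_3+\ell$ and using periodicity to restore the fundamental domain,
\[
\int_\Omega\int_\Omega K(x,y)\,\inv(u)^2(x)\,\inv(u)^2(y)\,dx\,dy = \int_\Omega\int_\Omega K(x,y)\,u^2(x)\,u^2(y)\,dx\,dy ,
\]
the additive-constant ambiguity of $K$ being harmless here because $\|\inv(u)\|_{L^2}=\|u\|_{L^2}$. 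Hence $\Phi\circ g=\Phi$ for all $g\in G$, and by the principle of symmetric criticality every critical point of $\Phi|_{X_G}$ is a critical point of $\Phi$ on $X$, i.e.\ a weak solution of \eqref{eq:periodic_choquard}.

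Next I would record the functional-analytic ingredients on $X_G$: that $\Phi|_{X_G}$ is of class $C^1$, that $\mathcal{N}_G := \mathcal{N}\cap X_G$ is a natural constraint on which $\Phi$ is bounded below by a positive constant, and that $\Phi|_{X_G}$ satisfies the Palais--Smale condition. The only item specific to the new space is the compact embedding $X_G\hookrightarrow L^p(\Omega)$ for $p\in(2,6)$: since elements of $X_G$ are radial in $x'\in\R^2$ and $\Omega$ is bounded in the $x_3$-direction, a Strauss-type decay estimate in $x'$ prevents the escape of mass to infinity. Granted this, the $K_1$-part of the nonlocal term is controlled by the Hardy--Littlewood--Sobolev inequality and the $K_2$-part by the $\log$-weighted $L^1$-bound built into $X$ (using the asymptotics of Theorem~\ref{thm:asymtoticsGreen}), exactly as in the proof of Theorem~\ref{thm:high_energy_radial}.

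Since $\Phi$ is even and $X_G$ is infinite-dimensional, I would then run the symmetric Ljusternik--Schnirelmann scheme on $\mathcal{N}_G$ --- equivalently, a genus-based minimax on the unit sphere of $X_G$ transported to $\mathcal{N}_G$ via the Nehari projection $u\mapsto t(u)u$ --- to produce pairs $\{\pm u_n\}_n\subset X_G$ of critical points of $\Phi|_{X_G}$ with $\Phi(\pm u_n)\to\infty$, the lowest level being $c_G=\inf_{\mathcal{N}_G}\Phi$, attained because the associated minimizing Palais--Smale sequence converges. Full nontriviality is now automatic and requires no restriction on $\ell$: for any $0\neq u\in X_G$ one has $\overline u(x',x_3-\ell)=-u(x',x_3)$, so $\partial_3 u\equiv 0$ a.e.\ would force $u(x')=-u(x')$, hence $u\equiv 0$, a contradiction. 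Finally, for the regularity claim, Theorem~\ref{thm:wsolution} gives $\overline{\cK[u_n^2]}\in C^{0,\alpha}_{loc}(\R^3)$ for all $\alpha\in(0,1)$; writing $-\Delta\overline u_n = -\big(\overline a+\overline{\cK[u_n^2]}\big)\overline u_n$ and using $\overline a\in C^{0,\alpha}_{loc}(\R^3)$, the $C^1$-matching across $x_3=\pm\ell$ built into the periodic boundary conditions makes $\overline u_n$ a genuine solution on all of $\R^3$, so interior Schauder estimates bootstrap $\overline u_n\in C^{2,\alpha}_{loc}(\R^3)$.

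I expect the main obstacle to be the combination of the $G$-invariance verification with the $C^1$ and Palais--Smale statements for $\Phi|_{X_G}$: the sign-changing, two-sided unbounded kernel $K$ must be handled with the $K=K_1+K_2$ splitting throughout, and in particular one must justify the translation invariance of $\overline K$ under shifts by half the period that turns $\inv$ into a symmetry of the functional. Once the symmetry and the compactness on $X_G$ are secured, the minimax argument and the elliptic bootstrap are routine and follow the template of Theorem~\ref{thm:high_energy_radial}.
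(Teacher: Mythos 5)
Your proposal is correct and follows essentially the same route as the paper: verify that $\Phi$ is $G$-invariant (using the translation identity of Corollary~\ref{corollary:translation_K} and evenness in $u$), invoke the principle of symmetric criticality for the $G$-action, rerun the Nehari/genus minimax and Palais--Smale arguments of Section~\ref{multiplicity} in $X_G$, observe that full nontriviality is automatic since $\overline u(x',x_3-\ell)=-u(x',x_3)$ forces $\partial_3 u\not\equiv 0$ for $u\neq 0$, and obtain $C^{2,\alpha}_{loc}$ regularity from Theorem~\ref{thm:solsystem}. The only cosmetic difference is that you credit compactness to a Strauss-type radial decay, whereas the paper gets it from the log-weighted embedding and the $O(2)$-invariant Lions-type lemma already established for $X_r$, which applies verbatim since $X_G\subset X_r$ in the $x'$-variable.
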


This paper is organized as follows. In Section \ref{section:green_function} we carry out a detailed analysis of Green function corresponding to the problem Poisson problem \eqref{eq:poisson_per}. We then dedicate Section \ref{sec:poisson} to the formulation and analysis of the boundary value problem \eqref{eq:poisson_per}. In Section \ref{sec:variational_setting} we present our variational setting and investigate the structure of the Nehari manifold. Finally in Sections \ref{multiplicity} and \ref{constant_potentials} we will present our existence results for the general radially symmetric case and in the particular case for constant potentials.

Throughout this work we make use of generic labels $C,C_1,C_2$, etc. to name constants whose exact value is not relevant to the argument. These may change from line to line when there is no risk of confusion.

\section{The Green function for the periodic problem}\label{section:green_function}
Let $B$ be a Banach space and $B^*$ be its dual space. We will denote the action of the dual space by
$$ \langle f , u \rangle_B := f(u) \quad f\in B^* , u \in B. $$
We recall the definition of the planar Fourier transform. Formally, it is defined for $u:\R^2 \to \R$ by
$$ \cF[u](\xi') := \frac{1}{2\pi} \int_{\R^2} u(x')e^{-i x' \cdot \xi'} dx' \quad \xi' \in \R^2.$$
More generally, for functions $w:\Omega \to \R$, $(x',x_3) \mapsto w(x',x_3)$, we denote the planar Fourier transform with respect to $x'$ by
$$ \cF_{x'}[w(\cdot,x_3)] (\xi') := \frac{1}{2\pi} \int_{\R^2} w(x',x_3)e^{-i x' \xi'} dx' \qquad \xi' \in \R^2, \, x_3 \in [-\ell,\ell].$$ 
To keep the notation as consistent as possible, we use the symbol $\cF$ exclusively to denote the planar Fourier transform of a function $\R^2 \to \R$ and $\cF_{x'}$ exclusively for the planar Fourier transform of a function $\R^3 \to \R$.

In order to simplify the notation, we formally define the operator $\cF_*$ which transforms functions on $\overline{\Omega}$ to functions on $\overline{\Omega}$ by 
$$
[\cF_* u](\xi) = \mathcal{F}_{x'}u(\cdot,\xi_3)(\xi') \qquad \text{for $\xi= (\xi',\xi_3) \in \overline{\Omega}$.}
$$
Our starting point is to compute the Fourier transform of
\begin{align*}
	\begin{cases}
		\Delta w(x) = \delta(x-y) \quad &x \in \Omega,\\
		w(\cdot,-\ell) = w(\cdot,\ell) \quad &\text{in } \R^2, \\
		\partial_3 w(\cdot,-\ell) = \partial_3 w(\cdot,\ell) \quad &\text{in } \R^2,
	\end{cases}
\end{align*}
in the variable $x'$. For any fixed $y \in \Omega$ this yields the family of one-dimensional ODE problems
\begin{align}\label{eq:ode}
	\begin{cases}
		\partial_{3}^2 w(\xi',x_3) - |\xi'|^2 w(\xi',x_3) = \frac{e^{-i\xi'y'}}{2 \pi}\delta(x_3-y_3) &\qquad x_3 \in (-\ell,\ell),\\
		w(\xi',0) = w(\xi',1), \\
		\partial_{3} w(\xi',0) = \partial_3 w(\xi',1),
	\end{cases}	
\end{align}
for the transformed function 
$$
w(\xi',x_3)= \mathcal{F}_{x'}u(\cdot,x_3)(\xi') \qquad \text{for $\xi' \in \R^2, x_3 \in [-\ell,\ell].$}
$$

Let us analyze the one-dimensional problem
\begin{equation}\label{eq:greenode}
	\begin{cases}
		u''(t)- r^2 u(t)  = f(t)/2 \pi \qquad t \in (-\ell,\ell),\\
		u(0) = u(1),\\
		u'(0) = u'(1),
	\end{cases}	
\end{equation}
where $r \in \R \setminus\{0\}$ and $f \in H^1_{p}(-\ell,\ell) := \{ u \in H^1(-\ell,\ell) : u(-\ell)=u(\ell) \}$. Since we have the embedding $W^{1,2}(-\ell,\ell) \hookrightarrow C^{0,\frac{1}{2}}(-\ell,\ell)$\footnote{see e.g. \cite[Theorem 7.13]{leo}}, the equality $u(-\ell)=u(\ell)$ is meant in the normal sense for the continuous representative of $u$. We define the function
\begin{equation*}
	h_r(t,s)  = -\frac{e^{r|t-s|} + e^{r(2\ell-|t-s|)}}{4 \pi r (e^{2\ell r}-1)},
\end{equation*}
which is the Green function of the problem \eqref{eq:greenode}, as we will see below. Observe that $h_r$ is symmetric in $s$ and $t$, and for all $s \in [-\ell,\ell]$ the map $t \mapsto h_r(t,s)$ is absolutely continuous in $[-\ell,\ell]$ with derivative 
$$\frac{d}{dt}h_r(t,s) = \begin{cases}
	\text{sgn}(s-t) \frac{e^{r|t-s|}-e^{r(2\ell-|t-s|)}}{4 \pi (e^{2\ell r}-1)} \quad &t \neq s\\
	0\quad &t = s.
\end{cases}$$
Then $\partial_t h_r(\cdot,s) \in L^2(-\ell,\ell)$ for all $s \in [-\ell,\ell]$ and it follows that $h_r(\cdot,s) \in H^1(-\ell,\ell)$ for all $s \in [-\ell,\ell]$. A simple computation shows that $h_r(t,\cdot)$ satisfies the periodic boundary conditions in \eqref{eq:greenode} for all $t \in [-\ell,\ell] \to \R$, so $h_r(t,\cdot) \in H^1_{p}(-\ell,\ell)$ for all $t \in [-\ell,\ell]$.

We introduce, for any measurable function $f :[-\ell,\ell] \to \R$, the integral transform
\begin{equation}\label{eq:operatorK}
	\mathcal{H}_rf(t) := \int_{-\ell}^\ell h_r(t,s)f(s)ds \in [-\infty,\infty] \qquad \text{for all } t \in [-\ell,\ell]
\end{equation}
\begin{lemma}\label{lemma:greenode}
	For all $r \in \R \setminus \{0\}$ and $f \in H^1_{p}(-\ell,\ell)$ the boundary value problem \eqref{eq:greenode}
	has a unique solution in $H^1_p(-\ell,\ell)$ given by $u:= \mathcal{H}_rf$. Moreover, $u \in C^{2,\frac{1}{2}}([-\ell,\ell])$ and it is a classical solution to \eqref{eq:greenode}.
	
\end{lemma}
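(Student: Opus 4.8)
The plan is to verify directly that $u:=\mathcal{H}_r f$ is a classical solution with the stated Hölder regularity, and then obtain uniqueness in $H^1_p(-\ell,\ell)$ from an elementary energy identity; the derivation of the formula for $h_r$ is not needed, since we only have to check its consequences. First, since $f\in H^1_p(-\ell,\ell)\hookrightarrow C^{0,1/2}([-\ell,\ell])$ is bounded and $h_r(t,\cdot)\in L^\infty(-\ell,\ell)$ for each $t$, the integral $\mathcal{H}_r f(t)=\int_{-\ell}^\ell h_r(t,s)f(s)\,ds$ is finite for every $t$. I would then split
$$ u(t)=\int_{-\ell}^{t}h_r(t,s)f(s)\,ds+\int_{t}^{\ell}h_r(t,s)f(s)\,ds $$
and observe that on each of the triangles $\{s\le t\}$ and $\{s\ge t\}$ the kernel $h_r$ agrees with a function that is smooth (indeed entire) in $(t,s)$, namely the branch obtained by replacing $|t-s|$ by $t-s$ or by $s-t$. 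Applying the Leibniz rule to each integral, the boundary terms at $s=t$ cancel because $h_r$ is continuous across the diagonal, which gives $u'(t)=\int_{-\ell}^\ell\partial_t h_r(t,s)f(s)\,ds$, again a sum of two integrals of smooth integrands over the two triangles; hence $u\in C^1([-\ell,\ell])$, and differentiating once more by the same device shows $u\in C^2((-\ell,\ell))$.

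In this second differentiation the boundary terms no longer cancel, because $\partial_t h_r(t,\cdot)$ jumps across the diagonal: from the explicit formula for $\partial_t h_r$ recorded above, $\lim_{s\to t^-}\partial_t h_r(t,s)=\tfrac{1}{4\pi}$ while $\lim_{s\to t^+}\partial_t h_r(t,s)=-\tfrac{1}{4\pi}$, so the Leibniz boundary contribution equals $\tfrac{1}{2\pi}f(t)$. Since each branch of $h_r$ solves the homogeneous equation in $t$, i.e. $\partial_t^2 h_r(t,s)=r^2 h_r(t,s)$ for $s\neq t$, the remaining integral is $r^2 u(t)$, and therefore
$$ u''(t)=\frac{1}{2\pi}f(t)+r^2 u(t)\qquad\text{in }(-\ell,\ell). $$
For the boundary conditions I would use the symmetry $h_r(t,s)=h_r(s,t)$ together with the already-noted fact that $h_r(t,\cdot)\in H^1_p(-\ell,\ell)$ for every $t$: this yields $h_r(-\ell,s)=h_r(\ell,s)$ and $\partial_t h_r(-\ell,s)=\partial_t h_r(\ell,s)$ for all $s$, and integrating these against $f$ in the representations of $u$ and $u'$ gives $u(-\ell)=u(\ell)$ and $u'(-\ell)=u'(\ell)$. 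Thus $u$ is a classical solution of \eqref{eq:greenode}, and in particular $u\in H^1_p(-\ell,\ell)$. The Hölder estimate is then a one-line bootstrap: $u\in C^1([-\ell,\ell])\hookrightarrow C^{0,1/2}$ and $f\in C^{0,1/2}$, so $u''=r^2u+f/2\pi\in C^{0,1/2}([-\ell,\ell])$ and hence $u\in C^{2,1/2}([-\ell,\ell])$.

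For uniqueness, let $\tilde u\in H^1_p(-\ell,\ell)$ be any solution, understood in the weak sense $\int_{-\ell}^\ell\big(\tilde u'\psi'+r^2\tilde u\psi\big)\,dt=-\tfrac{1}{2\pi}\int_{-\ell}^\ell f\psi\,dt$ for all $\psi\in H^1_p(-\ell,\ell)$. Testing first with $\psi\in C_c^\infty(-\ell,\ell)$ gives $\tilde u''=r^2\tilde u+f/2\pi$ in the distributional sense, whence $\tilde u\in H^3(-\ell,\ell)\hookrightarrow C^2([-\ell,\ell])$ and, inserting this back into the weak formulation for general $\psi$, also $\tilde u'(-\ell)=\tilde u'(\ell)$. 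The difference $\phi$ of two such solutions then satisfies the homogeneous equation with periodic boundary conditions, and testing with $\psi=\phi$ yields $\int_{-\ell}^\ell\big(|\phi'|^2+r^2|\phi|^2\big)\,dt=0$; since $r\neq 0$ this forces $\phi\equiv0$. Hence the solution is unique and coincides with $\mathcal{H}_r f$.

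The step I expect to require the most care is the double differentiation under the integral sign, namely correctly accounting for the jump of $\partial_t h_r$ across $\{t=s\}$ so that the inhomogeneity $f/2\pi$ — with exactly this constant — is recovered; the verification of the periodic boundary conditions via symmetry, the Hölder bootstrap, and the energy argument for uniqueness are all routine.
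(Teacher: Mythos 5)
Your proposal is correct, and it reaches the conclusion by a genuinely different route than the paper. The paper first moves the derivatives onto $f$ (writing $u'(t)=\int h_r(t,s)f'(s)\,ds$, $u''(t)=\int \partial_t h_r(t,s)f'(s)\,ds$ via a periodic change of variables, which is where the hypothesis $f\in H^1_p$ is used in an essential way), and then identifies the equation only in weak form, by a double integration by parts against periodic $C^2$ test functions in which the diagonal jump of $\partial_t h_r$ produces the source term; the $C^{2,1/2}$ regularity then comes from the same bootstrap you use. You instead split the integral at the diagonal and apply the Leibniz rule twice: the first boundary terms cancel by continuity of $h_r$ across $\{s=t\}$, the second ones produce exactly $\tfrac{1}{2\pi}f(t)$ from the jump of $\partial_t h_r$, and the fact that each branch solves the homogeneous equation gives $r^2u(t)$, so you obtain the classical ODE pointwise without ever invoking a weak formulation, and in fact using only continuity of $f$ (its $H^1$ regularity enters only through the embedding into $C^{0,1/2}$ in the final bootstrap). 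Your jump values $\lim_{s\to t^\mp}\partial_t h_r(t,s)=\pm\tfrac{1}{4\pi}$ are the correct ones (they differ in sign from the paper's because you take limits in $s$ rather than $t$), and your constant $f/2\pi$ matches \eqref{eq:greenode}. You also supply an explicit uniqueness argument via the periodic weak formulation and the energy identity, which the paper's proof leaves implicit; this is a welcome addition. The only cosmetic imprecision is the justification of $u'(-\ell)=u'(\ell)$: membership of $h_r(t,\cdot)$ in $H^1_p$ records only equality of values at $\pm\ell$, so the identity $\partial_t h_r(-\ell,s)=\partial_t h_r(\ell,s)$ does not follow from that alone; it does follow from the explicit formula, since $h_r$ depends on $(t,s)$ through an expression in $|t-s|$ that is invariant under $|t-s|\mapsto 2\ell-|t-s|$, so this is a one-line verification rather than a gap.
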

\begin{proof}
	For the remaining of the proof we fix $r \neq 0$ and write $h(t,s) := h_r(t,s)$ for short. Since $f \in H^1(-\ell,\ell)$ and both $h$ and $\partial_t h$ are bounded in $[-\ell,\ell]^2$, it is easy to see from Lebesgue's theorem that $w \in W^{1,2}(-\ell,\ell)$ and
	\begin{align*}
		\partial_t u(t) &= -\partial_t \int_{-\ell}^{\ell} \frac{e^{r|t-s|} + e^{r(2\ell-|t-s|)}}{4 \pi r (e^{2\ell r}-1)} f(s) ds = -\partial_t \int_{-\ell}^{\ell} \frac{e^{r|s|} + e^{r(2\ell-|s|)}}{4 \pi r (e^{2\ell r}-1)} f(t-s) ds \\
		&= \int_{-\ell}^\ell h(t,s)f'(s) ds
	\end{align*}
	and similarly, we have
	$$ \partial_t^2 u(t) = \int_{-\ell}^\ell \partial_t h(t,s)f'(s) ds. $$
	Thus $u \in W^{2,2}(-\ell,\ell)$ and from the Sobolev embeddings it follows that $u \in C^{1,\frac{1}{2}}(-\ell,\ell)$.
	
	We proceed to show that $u$ solves the equation in the weak sense. A simple computation shows that leaving $s \in [-\ell,\ell]$ fixed, the map $t \mapsto h(t,s)$ solves the homogeneous equation 
	$$ u''(t) - r^2 u(t)=0,$$
	in the subintervals $(-\ell,s)$ and $(s,\ell)$. Further, fixing $s \in (-\ell,\ell)$ the right and left side derivatives of the map $t \mapsto h(t,s)$ exist at $s$ and are equal to 
	$$\partial_t h(s^-,s) =-1/4 \pi \quad \text{and} \quad \partial_t h(s^+,s) =1/4 \pi.$$
	Now, take $\varphi \in C^2([-\ell,\ell])$ with $\varphi(-\ell)=\varphi(\ell)$ and integrate by parts
	\begin{align*}
		&\int_{-\ell}^\ell u'(t) \varphi'(t) dt = \int_{-\ell}^\ell \int_{-\ell}^\ell h(t,s)f'(s)\varphi'(t) ds dt \\
		&= \int_{-\ell}^\ell ( h(t,t)f(t)-h(t,-\ell)f(-\ell)+h(t,\ell)f(\ell)-h(t,t)f(t))\varphi'(t)dt\\
		&\qquad + \int_{-\ell}^\ell \int_{-\ell}^\ell \partial_s h(t,s)f(s)\varphi'(t) dt ds = \int_{-\ell}^\ell \int_{-\ell}^\ell \partial_t h(t,s)f(s)\varphi'(t) dt ds\\
		&= \int_{-\ell}^\ell ( \partial_t h(s^-,s)\varphi(s)-\partial_th(-\ell,s)\varphi(-\ell)+ \partial_t h(\ell,s)\varphi(\ell)- \partial_t h(s^+,s)\varphi(s))f(s)ds\\
		&\qquad  - \int_{-\ell}^\ell \int_{-\ell}^\ell \partial_t^2 h (t,s)f(s)\varphi(t) ds dt \\
		&= -\int_{-\ell}^\ell \frac{\varphi(s)f(s)}{2 \pi} ds - \int_{-\ell}^\ell \int_{-\ell}^\ell r^2 h(t,s)f(s)\varphi(t) dt ds \\
		& = \int_{-\ell}^\ell \left( -\frac{f(t)}{2 \pi} - r^2 w(t) \right) \varphi(t) dt 
	\end{align*}
	Thus, we have $u''(t) - r^2 u(t) = f(t)/4 \pi$ a.e. in $[-\ell,\ell]$. Moreover, since $f \in C^{0,\frac{1}{2}}(-\ell,\ell)$ it follows that $u'' \in C^{0,\frac{1}{2}}(-\ell,\ell)$, i.e. $u \in C^{2,\frac{1}{2}}(-\ell,\ell)$ and $u$ is the classical solution to \eqref{eq:greenode} and, since $k$ satisfies $h(-\ell,s) = h(\ell,s)$ for all $s \in [-\ell,\ell]$, we see that $u$ satisfies the periodic conditions \eqref{eq:greenode}.
\end{proof}

Thus, for all $\xi' \in \Omega$ the solution to \eqref{eq:ode} is given by
\begin{equation}\label{eq:w_function}
	w(\xi',x_3) = -e^{-i \xi' y'}\frac{e^{|\xi'|x_3-y_3|} + e^{|\xi'|(2\ell-|x_3-y_3|)}}{4 \pi |\xi'|(e^{2\ell|\xi'|}-1)}
\end{equation}

Now, we start with the proof of Theorem \ref{thm:asymtoticsGreen}

\begin{proof} [Proof of Theorem \ref{thm:asymtoticsGreen} (Item 1)]
In what follows we will set $y = 0$, i.e. we find $K(\cdot,0)$, and we obtain the general expression using the translation rules of the Fourier transform. 

Observe that
\begin{align}\label{eq:split}
	\frac{e^{|\xi'|t} + e^{|\xi'|(2\ell-t)}}{4 \pi |\xi'|(e^{2\ell|\xi'|}-1)} = \frac{e^{-|\xi'|t}}{4\pi|\xi'|} + \frac{\cosh(|\xi'|t)}{4\pi|\xi'|(e^{2\ell|\xi'|}-1)} \quad \forall t \in\R,\; \xi' \in \R^2.
\end{align}

We continue the analysis of this function in parts:\\

\textit{Part 1. The Fourier transform of $\xi' \mapsto e^{-|\xi'|t}/|\xi'|$\footnote{Here, we note that for an even real-valued function the Fourier transform is the same as the inverse Fourier transform.}\\
}
The inverse Fourier transform of the first term on the RHS of \eqref{eq:split} can be computed fixing $t \in \R$ and using the Hankel transform of order 0 of the map $r \mapsto \frac{e^{-r|t|}}{r}$
\begin{equation}\label{eq:hankel}
	\mathcal{F}_{x'}\left(\frac{1}{4 \pi |(\cdot,t)|}\right)(\xi') = \frac{e^{-|\xi'|t}}{4 \pi |\xi'|} \qquad \text{for $\xi' \in \R^2,\;t \in \R$.}
\end{equation}
Thus, we have found the singular part of $K(\cdot,0)$. We set $\tilde{K}_1(x) = -(4\pi |x|)^{-1}$, which then satisfies
\begin{equation*}
	\mathcal{F}_{*}\tilde{K}_1(\xi) = -\frac{e^{-|\xi'|\xi_3}}{4 \pi |\xi'|} \qquad \text{for $\xi \in \R^3$.}
\end{equation*}
\\

\textit{Part 2. The Fourier transform of $\cosh(|\xi'|t)/4 \pi |\xi'|(e^{2\ell|\xi'|}-1)$}

Fix a nonnegative cut-off function $\chi \in C^\infty_c(\mathbb{R}^2)$ with $|\chi| \leq 1$ which equals $1$ in a neighborhood of $0$ and we extend it as a constant function to $\Omega$
$$ \xi \mapsto \chi(\xi') \qquad \xi = (\xi',\xi_3) \in \Omega .$$
To split the behavior of this function at $0$ and infinity, we write
\begin{align*} w(\xi',t) &:= \frac{\cosh(|\xi'|t)}{4 \pi |\xi'|(e^{2 \ell |\xi'|}-1)} = w(\xi',t) \chi(\xi') + w(\xi',t)(1-\chi(\xi))\\
	&=:w_1(\xi',t) + w_2(\xi',t),
\end{align*}
and now we analyze each term individually.

\textit{Part 2.1 The $w_1$ term}

We use the Laurent series expansion of the map $z \mapsto \frac{ (\cosh(t z))}{z(e^{2\ell z}-1)}$ at zero to express the $\hat{w}_1$ term as
\begin{equation}\label{eq:eq5}
	w_1(\xi',t) = \chi(\xi') \left( \frac{1}{8 \ell \pi |\xi'|^2} -\frac{ 1}{8 \pi |\xi'|} +\frac{2 \ell^2+3t^2}{24 \ell \pi} - r(\xi',t) \right).
\end{equation}
Now, we will show that the term
$$ \frac{\chi(\xi')}{8 \ell \pi |\xi'|^2}, $$
will produce the logarithmic behavior of the Fourier transform at infinity, whereas the term
$$ \chi(\xi') \left( \frac{2 \ell^2 + 3t^2}{24 \ell \pi} - \frac{1}{8 \pi |\xi'|} - r(\xi',t) \right),$$
converges to $0$ as $|x| \to \infty$ for $x \in \Omega$.

Observe that the remainder term $r(\xi',t)=\hat{w}(\xi) -\frac{1}{8 \ell \pi |\xi'|^2}+\frac{1}{8 \pi |\xi'|}-\frac{2\ell ^2+3t^2}{24 \ell \pi}$ is continuous in $(\xi',t)$. Then, for fixed $t$, the map $\xi' \mapsto r(\xi',t)\chi(\xi',t)$ is an element of $C_c(\mathbb{R}^2)$. Thus, its Fourier transform belongs to $C^\infty(\mathbb{R}^2)$ and by the Riemann-Lebesgue lemma we have, for $t \in [-\ell,\ell]$ fixed
\begin{equation}\label{eq:fresidue0} \mathcal{F}_*[\chi r ](x',t) \to 0 \quad \text{as } |x'|\to \infty, \, x' \in \R^2.
\end{equation}
Now, we will show below that this convergence is locally uniform on $t \in [-\ell,\ell]$ and thus, by compactness, uniform in $t \in [-\ell,\ell]$. Indeed,
we observe that the map $\xi \mapsto \chi(\xi)r(\xi)$ is continuous and is compactly supported in $\mathbb{R}^2 \times [-\ell,\ell]$, thereby it is bounded by a constant $C > 0$. Thus
$$ |\chi(\xi)[r(\xi',t) - r(\xi',t^*)]| \leq 2C 1_{\{\chi > 0\}}(\xi') \in L^1(\R^2) \quad \text{for all } t,t^* \in [-\ell,\ell]. $$
Then, from the dominated convergence theorem we conclude that
\begin{align*}
	|\mathcal{F}[\chi r](x',t)-\mathcal{F}[\chi r](x',t^*)| \leq \frac{1}{2 \pi} \int_{\mathbb{R}^2} \chi(\xi')|r(\xi',t)-r(\xi',t^*)| d\xi' = o(1),
\end{align*}
uniformly on $x' \in \R^2$ as $|t-t^*| \to 0$.
Thus we have shown that
$$  \mathcal{F}_*[ \chi r](x) \to 0 \quad \text{as }|x| \to \infty, \, \text{ uniformly for } x \in \Omega. $$

We carry out a similar argument for
$$ \chi(\xi) \left(\frac{2\ell^2+3|\xi_3|^2}{24 \ell \pi} - \frac{1}{8 \pi |\xi'|}  \right), $$
this time using the fact that this function lies in $L^1(\R^2)$ as a function of $\xi'$ and is compactly supported in $\R^2 \times [-\ell,\ell]$. Thus, its Fourier transform is a smooth function that converges to $0$ uniformly as $|x| \to \infty$ for $x \in \Omega$.

To find the transform of the first term in \eqref{eq:eq5}, we interpret the map
$$ \R^2 \to \R : \xi' \mapsto 1/|\xi'|^2,$$
as the tempered distribution
$$ \langle |z|^{-2}_m, \varphi(z) \rangle_{\mathcal{S}(\R^2)} := \int_{|z|<m}  \frac{\varphi(z) - \varphi(0)}{|z|^2}dx + \int_{|z|>m}\frac{\varphi(z)}{|z|^2}dz \quad \forall \varphi \in \mathcal{S}(\R^2), \, m>0$$
where $\mathcal{S}(\R^2)$ denotes the set of Schwartz functions in $\R^2$, see \cite{ss}, and $m$ is to be set appropriately. These distributions are related to each other in function of $m$ through the identity 
\begin{equation}\label{eq:identity_m}
	|z|_{m_2}^{-2} =  |z|_{m_1}^{-2} -c_m \delta \quad \text{for } m_2 > m_1, \quad \text{where } c_m := \int_{m_1 < |z| < m_2}\frac{dz}{|z|^2}.
\end{equation}
In this way, we have the following expression for the Fourier transform of the map \mbox{$\R^2 \to \R : z \mapsto |z|^{-2}$} in the distributional sense for $m = 1$
$$ \mathcal{F}\left( {\frac{|z|^{-2}_1}{8 \ell \pi}} \right) =  -\frac{\log|z|+\log \pi + \gamma}{4 \ell }, $$
where $\gamma$ is the Euler-Mascheroni constant, see \cite[Chapter 32]{d}. To simplify the expressions, using \eqref{eq:identity_m}, we may choose $m > 0$ in such a way that
$$ \mathcal{F}\left( {\frac{|z|^{-2}_m}{8 \pi}} \right) =  -\frac{\log|z|}{4}, $$
and we will interpret the function $1/|z|^2$ as this particular distribution, unless otherwise stated.

Now, since $\chi \in \mathcal{S}(\R^2)$, we have $\cF_* [\chi](\cdot,t) \in \mathcal{S}(\R^2)$ for all $t \in [-\ell,\ell]$, so that the convolution 
$$ (\cF_*[\chi] \ast \log|\cdot|) (x',t) := \int_\Omega \log|x'-y'|\cF_*[\chi](y',t) dy' \qquad \text{ for all } (x',t) \in \Omega. $$
is well-defined and constant on the third variable. Then, the properties of the Fourier transform imply that the planar transform of this convolution is $-\frac{\chi(\xi',t)}{8 \ell \pi |\xi'|^2}$ for all $t \in \R$.\footnote{Here, we may assume without loss of generality that $\chi$ is an even function so that $\cF \cF \chi(\xi') = \chi(-\xi')=\chi(\xi')$.}

In summary, we have shown that
$$ x= (x',x_3) \mapsto \cF_* w_1(x) = -(\cF_*[\chi] \ast \log|\cdot|)(x) + o(1) $$
as $|x| \to \infty$ for $x \in \Omega$.

\textit{Part 2.2. The asymptotics of the logarithmic convolution}

We have the following identities
\begin{equation}\label{eq:log}
	\begin{gathered}
		\log|x-z| = \log(1+|x-z|) - \log(1 + \frac{1}{|x-z|})\\
		\log(1 + |x-z|) \leq \log(1+|x|) + \log(1+|z|) \quad \text{and} \quad \log\left(1+\frac{1}{|x-z|}\right) \leq \frac{1}{|x-z|}.
	\end{gathered}
\end{equation}
In this way, recalling that both $\chi$ and $\cF \chi$ are constant on the third variable, it follows that
\begin{align}\label{eq:eq2}
	\begin{split}
		&(\cF_* [\chi]\ast \log|\cdot|)(x',t) =  \int_{\Omega}\cF_* [\chi](y',t)\log|x'-y'|dy  \\
		&= 2\ell \int_{\R^2} \log(1+|x'-y'|) \cF [\chi](y')dy' - 2 \ell \int_{\R^2} \log\left(1+ \frac{1}{|x'-y'|}\right)\cF [\chi](y') dy.
	\end{split}
\end{align}
Now, we estimate the last integral
\begin{align}\label{eq:eq3}
	\begin{split}
		&\int_{\R^2} \log \left(1+ \frac{1}{|x'-y'|} \right) \cF [\chi](y') dy' \leq \int_{\R^2}\frac{|\cF [\chi](y')|}{|x'-y'|}dy' = \int_{|x'-y'|<1}\frac{|\cF [\chi](y')|}{|x'-y'|}dy'  \\
		&\quad+ \int_{|x'-y'|>1}\frac{|\cF [\chi](y')|}{|x'-y'|}dy' \leq |\cF [\chi]|_{\infty} \int_{|y'|<1}\frac{1}{|y'|}dy' + \int_{\R^2}|\cF [\chi](y')|dy'.
	\end{split}
\end{align}
Observe that the right-hand side of \eqref{eq:eq3} is a constant that depends only on $\cF [\chi]$. Then, for the remaining integral in \eqref{eq:eq2} we use the inequalities \eqref{eq:log} to estimate as follows
\begin{align*}
	\frac{\log(1+|x'-y'|)}{\log(1+|x|)}\cF [\chi](y') &\leq \left( \frac{\log(1+|x'|)}{\log(1+|x|)} + \frac{\log(1+|y'|)}{\log(1+|x|)} \right) \cF [\chi](y') \\
	&\leq C(1 + \log(1+|y'|))\cF [\chi](y') \in L^1(\R^2) 
\end{align*}
uniformly on $|x| \geq 1 $ and a constant $C>0$. Now, we divide in \eqref{eq:eq2} by $\log(1+|x|)$ and pass to the limit using the dominated convergence theorem 
\begin{align*}
	\frac{(\hat{\chi} \ast \log|\cdot|)(x)}{\log(1+|x|)} \to 2 \ell \int_{\R^2} \cF [\chi](y)dy = 2 \ell \mathcal{F} [\mathcal{F} [\chi]](0) = 2 \ell \chi(0) = 2 \ell >0 \quad \text{ as } |x|\to \infty, x \in \Omega.
\end{align*}
Altogether this means that $x \mapsto (\cF_* [\chi] \ast \log|\cdot|) (x)$ is a smooth function with 
$$(\cF_* [\chi]\ast\log|\cdot|)(x) \sim \log(1+|x|) \quad \text{as }|x| \to \infty, x \in \Omega.$$

Thus far we have found the planar Fourier transform of $w_1$ and it satisfies $\cF_* w_1 \in C^\infty(\overline{\Omega})$ with the asymptotic profile 
$$ \cF_* [w_1](x) \sim -\log(1+|x|) \quad \text{as } |x| \to \infty,x\in \Omega.$$

\textit{Part 2.3. The $w_2$ term}

We have the estimate
\begin{align}\label{eq:coshdecay} \left|  (1-\chi(\xi',t)) \frac{\cosh(|\xi'|t)}{4\pi|\xi'|(e^{2\ell|\xi'|}-1)} \right| \leq \left| \frac{\cosh(\ell|\xi'|)}{4 \pi |\xi'|(e^{2 \ell |\xi'|}-1)} \right|, \quad (\xi',t) \in \Omega
\end{align}
where the RHS of \eqref{eq:coshdecay} and its derivatives decay exponentially on $|\xi'|$. Then it is easy to see that $w_2 \in \mathcal{S}(\R^2)$ as a function of $\xi'$ for all $t \in [-\ell,\ell]$. Moreover, we can adapt the arguments above using the bound \eqref{eq:coshdecay} to show that $\cF_* [w_2](x) \to 0$ uniformly as $|x| \to \infty$.

Now, we set $\tilde{K}_2(x) = -\mathcal{F}_*(w_1 + w_2)(x)$. Then, we have $\tilde{K}_2 \in C^\infty(\Omega)$ and
\begin{equation}\label{eq:K2tilde}
	\tilde{K}_2(x) \sim \log(1+|x|) \quad \text{as } |x|\to \infty.
\end{equation}
\newline

Now, we set $\tilde{K} := \tilde{K}_1 + \tilde{K}_2$. To obtain the $K(x,y)$ for general $y \in \overline{\Omega}$, we observe from the properties of the Fourier transform, that for all $y \in \Omega$
$$ \mathcal{F}_*[ \tilde{K}(\cdot-y)](\xi) = e^{-i \xi' y'} \mathcal{F}_* [\tilde{K}](\xi',\xi_3 - y_3) = -e^{-iy'\xi'}\frac{  e^{|\xi'||\xi_3-y_3|} + e^{|\xi'|(2\ell-|\xi_3-y_3|)} }{4 \pi |\xi'|(e^{2\ell|\xi'|}-1)} \quad \xi \in \Omega. $$
Thus 
\begin{equation}\label{eq:translation_K}
	K(x,y) = K(x-y,0) = \tilde{K}(x-y) \quad \forall x,y \in \overline{\Omega},
\end{equation}
and similarly we have $K_1(x,y) = K_1(x-y,0) = \tilde{K}_1(x-y)$ and $K_2(x,y) = K_2(x-y,0) = \tilde{K}_2(x-y)$ for all $x,y \in \overline{\Omega}$. As a consequence, we have $K_1(x,y) = -(4 \pi |x-y|)^{-1}$, $K_2 \in C^{\infty}(\overline{\Omega}^2)$ and, since the asymptotics \eqref{eq:K2tilde} hold uniformly as $|x|\to \infty$, we have
$$ K_2(x,y) \sim \log(1+|x-y|) \qquad \text{uniformly as } |x-y| \to \infty, \, x,y \in \Omega.$$ 
In conclusion, we have shown that there is a function $K \in C^\infty(\overline{\Omega}^2 \setminus \{(x,y) \in \Omega^2 : x=y\})$ such that, for all $y \in \Omega$ the following identity holds
$$ \mathcal{F}_*[K(\cdot,y)](\xi) = \frac{-e^{-iy'\xi'}}{4 \pi |\xi'|(e^{2\ell|\xi'|}-1)} ( e^{|\xi'||\xi_3-y_3|} + e^{|\xi'|(2\ell-|\xi_3-y_3|)}) \quad \xi \in \Omega. $$
in the sense that for all $y \in \Omega, t \in [-\ell,\ell]$ and $\varphi \in \mathcal{S}(\R^2)$ the following holds
\begin{equation}\label{eq:distributiondefinition}
	\langle \mathcal{F}_*[K((\cdot,t),y)] , \varphi \rangle_{\mathcal{S}(\R^2)} = -\int_{\R^2} \frac{-e^{-iy'\xi'} ( e^{|\xi'||t-y_3|} + e^{|\xi'|(2\ell-|t-y_3|)}) }{4 \pi |\xi'|(e^{2\ell|\xi'|}-1)} (\varphi(\xi') - \varphi(0)1_{|\xi'|\leq m}) d\xi',
\end{equation}
where $m$ is chosen so that the planar Fourier transform of the distribution $z \mapsto |z|_m^{-2}$ satisfies
$$ \mathcal{F}\left( {\frac{|z|^{-2}_m}{8 \pi}} \right) =  -\frac{\log|z|}{4 }. $$

Finally, since
$$\xi \mapsto \frac{-e^{-iy'\xi'}}{4 \pi |\xi'|(e^{2\ell|\xi'|}-1)} ( e^{|\xi'||\xi_3-y_3|} + e^{|\xi'|(2\ell-|\xi_3-y_3|)}) \qquad y \in \Omega,$$
satisfies the periodic boundary conditions and is symmetric in $\xi$ and $y$, then the Fourier transform $K$ satisfies $K(\cdot,-\ell,y) = K(\cdot,\ell,y)$ for all $y \in \Omega$, and $K(x,y) = K(y,x)$ for all $x,y \in \overline{\Omega}$, $x \neq y$. This concludes with points $1$ and $2$ of Theorem \ref{thm:asymtoticsGreen}. We will complete the proof of item $2$ below.
\end{proof}
From the identity \eqref{eq:translation_K} we obtain
\begin{corollary}\label{corollary:translation_K}
	For all $x,y,z \in \overline{\Omega}$ the following identity holds
	$$ K(x,y+z) = K(x-z,y). $$ 
\end{corollary}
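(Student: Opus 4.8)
The plan is to read the identity off directly from the translation formula \eqref{eq:translation_K}, so that essentially no new analysis is needed. Recall that \eqref{eq:translation_K} asserts $K(x,y)=\tilde K(x-y)$ for all $x,y\in\overline\Omega$, where $\tilde K=\tilde K_1+\tilde K_2$; since the third coordinate of $x-y$ need not lie in $[-\ell,\ell]$, the middle and right-hand expressions there are already understood through the periodic extension of $\tilde K$, after reducing the third component modulo $2\ell$. Phrased differently, the (doubly) periodic extension of $K$ depends only on the single variable $x-y\in\R^3$.

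First I would fix $x,y,z\in\overline\Omega$ and apply \eqref{eq:translation_K} to the pair $(x,y+z)$ — reading the second argument through the periodic extension, since $y+z$ need not lie in $\overline\Omega$ — to obtain $K(x,y+z)=\tilde K\big(x-(y+z)\big)$ (periodic extension understood). Applying \eqref{eq:translation_K} a second time, now to the pair $(x-z,y)$, gives $K(x-z,y)=\tilde K\big((x-z)-y\big)$. Since $x-(y+z)=(x-z)-y$ as elements of $\R^3$, the two right-hand sides are literally the same number, which is exactly the asserted identity.

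The only point that requires a word of care — and the closest thing to an obstacle here — is the bookkeeping of domains: one must check that $K(x,y+z)$ and $K(x-z,y)$ are defined for the same triples $(x,y,z)$. But the excluded set for the first is $\{x\equiv y+z\}$ and for the second is $\{x-z\equiv y\}$, each taken modulo $2\ell\mathbb Z$ in the third coordinate, and these two conditions describe the same set, so there is no mismatch. All of the genuine content is contained in \eqref{eq:translation_K}, which was established in the proof of Theorem \ref{thm:asymtoticsGreen}, and the corollary follows by this two-line substitution.
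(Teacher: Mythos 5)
Your proposal is correct and is essentially the paper's own argument: the corollary is stated as an immediate consequence of the translation identity \eqref{eq:translation_K}, i.e. $K(x,y)=\tilde K(x-y)$ (with the periodic extension understood in the third coordinate), and substituting the pairs $(x,y+z)$ and $(x-z,y)$ gives the same value since $x-(y+z)=(x-z)-y$. Your extra remark on the domain bookkeeping is consistent with how the paper later uses the corollary (via the periodic extension $\overline K$), so nothing further is needed.
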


\begin{remark}\label{remark:derivativeaG}
With a similar analysis we obtain
$$\frac{d}{d x_i} K_2(x,y) = \frac{d}{d y_i}K_2(x,y) \to 0 \quad \text{as } |x-y| \to \infty,$$
uniformly on $x,y \in \Omega$, $i=1,2,3$. This will be useful to prove that $\mathcal{K}[u^2]$ belongs to an appropriate local Sobolev space.

This can be verified by subtracting from $K$ the singular part $K_1$ and apply the Fourier transform to obtain
$$ \mathcal{F}_*[\tilde{K} - \tilde{K}_1] (\xi) = - \frac{\cosh(|\xi'||\xi_3|)}{4 \pi |\xi'|(e^{2 \ell |\xi'|}-1)} . $$
Thus
$$  \mathcal{F}_*[\partial_i \tilde{K}_2] (\xi) = -\frac{i \xi_j \cosh(|\xi'||\xi_3|)}{4 \pi |\xi'| (e^{2 \ell |\xi'|}-1)} \quad \text{for } j =1,2$$
and
$$ \mathcal{F}_*[\partial_3 \tilde{K}_2] (\xi) = -\sgn(\xi_3) \frac{ \sinh(|\xi'||\xi_3|)}{4 \pi (e^{2 \ell |\xi'|}-1)},
$$
and we may proceed as above.
\end{remark}

\section{The Poisson problem}\label{sec:poisson}
In this section we will prove some results regarding Poisson part of the system \eqref{eq:SP_system_periodic}. We introduce the space
\begin{equation}\label{eq:Sobolev_periodic}
	H^1_{p}(\Omega):=\{ u \in H^1(\Omega) : u(\cdot,-\ell) = u(\cdot,\ell) \},
\end{equation}
here the equality $u(\cdot,-\ell) = u(\cdot,\ell)$ should be understood in the sense of traces, see e.g. \cite[Chapter 15]{leo}. Observe that the periodic condition $u(\cdot,-\ell) = u(\cdot,\ell)$ implies that the periodic extension of $u$ to $\R^3$ satisfies $\overline{u} \in H^1_{loc}(\R^3)$. Moreover, for all $\tau \in \R^3$ we can define an isometric translation operator $T_\tau : H^1_p(\Omega) \to H_p^{1}(\Omega)$ by setting
\begin{equation}\label{eq:translation_op}
	T_\tau w(x) := \overline{w}(x-\tau) \qquad x \in \Omega, w \in H^1_p(\Omega),
\end{equation}
where $\overline{w}:\R^3 \to \R$ is the periodic extension of $w$.
Then we consider the subspace 
$$ X := \{ u \in H^1_{p}(\Omega) : |u|_* < \infty \},$$
equipped with the norm
$$ \| u \|_X^2 := \| u \|_{H^1(\R^2)}^2 + |u|_*^2, $$
where
$$ |u|_*^2 := \int_\Omega \log(1+|x|)u^2(x) dx. $$
For any  measurable function $u : \Omega \to \R$ we define the convolution type operator
$$ \cK [u](x) := \int_{\Omega} K(x,y) u(y) dy \in [-\infty,\infty] \qquad x \in \Omega,$$
and if $u \in X$ we have the following
\begin{lemma}\label{lemma:convolution}
For all $u \in X$ we have $\cK [u^2] \in L^{\infty}_{loc}(\Omega)$, $\nabla (\cK [u^2]) \in L^2_{loc}(\Omega)$. Moreover, if $u \neq 0$ we have the asymptotic description 
$$\cK [u^2](x) \sim \log(1+|x|) \quad \text{as } |x| \to \infty.$$
As a consequence $\cK [u^2] \in W^{1,2}_{loc}(\Omega)$.
\end{lemma}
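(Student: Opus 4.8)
The plan is to decompose $K = K_1 + K_2$ as in Theorem \ref{thm:asymtoticsGreen} and treat each piece separately, exploiting that $K_1(x,y) = -\frac{1}{4\pi|x-y|}$ is (up to a constant) the $3$-dimensional Newtonian kernel while $K_2$ is smooth on $\overline{\Omega}\times\overline{\Omega}$ with the logarithmic growth $K_2(x,y)\sim\log(1+|x-y|)$. For the $K_1$-part, on any ball $B_R \Subset \Omega$ one writes $\int_\Omega K_1(x,y)u^2(y)\,dy = \int_{|x-y|<1} + \int_{|x-y|\ge 1}$; the near-diagonal integral is controlled by $\||x-\cdot|^{-1}\|_{L^q(B_1)}\|u^2\|_{L^{q'}}$ for a suitable $q<3$ together with the Sobolev embedding $H^1(\Omega)\hookrightarrow L^p(\Omega)$ for $p\le 6$ (so $u^2\in L^3$), and the far part is bounded by $\|u\|_{L^2}^2$. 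This gives $\cK[u^2]\in L^\infty_{loc}(\Omega)$ for the $K_1$ contribution, and differentiating under the integral sign produces the Riesz kernel $|x-y|^{-2}$, which lies in $L^q_{loc}$ for $q<3/2$, so by the same Hölder/Young argument $\nabla(\cK[u^2])\in L^2_{loc}$ locally (indeed $L^r_{loc}$ for $r<3$).

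For the $K_2$-part, since $K_2\in C^\infty(\overline{\Omega}\times\overline{\Omega})$ the only issue is integrability at infinity, not local regularity; here I would use the growth bounds from \eqref{eq:log}, namely $0\le \log(1+|x-y|)\le \log(1+|x|)+\log(1+|y|)$, to estimate
\[
\Bigl|\int_\Omega K_2(x,y)u^2(y)\,dy\Bigr| \le C\bigl(\log(1+|x|)\,\|u\|_{L^2(\Omega)}^2 + |u|_*^2\bigr),
\]
which is finite for $u\in X$ and locally bounded in $x$, and similarly for $\nabla_x K_2$ using Remark \ref{remark:derivativeaG}, where $\partial_i K_2(x,y)\to 0$ uniformly as $|x-y|\to\infty$, so $\partial_i K_2$ is globally bounded on $\overline{\Omega}\times\overline{\Omega}$ and the corresponding integral is dominated by $\|u\|_{L^2}^2$. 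Combining the two parts yields $\cK[u^2]\in L^\infty_{loc}(\Omega)$ and $\nabla(\cK[u^2])\in L^2_{loc}(\Omega)$, hence $\cK[u^2]\in W^{1,2}_{loc}(\Omega)$.

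For the asymptotic statement when $u\not\equiv 0$, I would again split $\cK[u^2] = \cK_1[u^2] + \cK_2[u^2]$. The $K_1$-term $-\frac{1}{4\pi}\int_\Omega \frac{u^2(y)}{|x-y|}\,dy$ tends to $0$ as $|x|\to\infty$ (dominate by $\|u\|_{L^2}^2$ for the tail and note the near-diagonal mass vanishes since $u^2\in L^1$), so it is $o(1)$ and in particular $o(\log(1+|x|))$. For the $K_2$-term, divide by $\log(1+|x|)$; the ratio $\frac{K_2(x,y)}{\log(1+|x|)}$ is bounded by $C(1+\log(1+|y|))$ uniformly for $|x|$ large by \eqref{eq:log}, which is $u^2(y)$-integrable since $u\in X$, and pointwise the uniform asymptotic equivalence $K_2(x,y)\sim\log(1+|x-y|)$ together with $\frac{\log(1+|x-y|)}{\log(1+|x|)}\to 1$ (for fixed $y$) lets me pass to the limit by dominated convergence, obtaining
\[
\frac{\cK[u^2](x)}{\log(1+|x|)} \longrightarrow c\int_\Omega u^2(y)\,dy \in (0,\infty)
\]
for the constant $c>0$ from the equivalence in Theorem \ref{thm:asymtoticsGreen}. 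The main obstacle is making the last limit rigorous: the equivalence $K_2(x,y)\sim\log(1+|x-y|)$ is an asymptotic-at-infinity statement in the combined variable $|x-y|$, so one must be careful that it holds \emph{uniformly} in $y$ (which the proof of Theorem \ref{thm:asymtoticsGreen} in fact gives) in order to dominate and pass to the limit under the integral; the local-regularity assertions, by contrast, are routine Young/Hölder estimates.
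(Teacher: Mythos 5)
Your proposal is correct and follows essentially the same route as the paper: split the kernel near/far from the diagonal, use the logarithm inequalities \eqref{eq:log} to get the bound $C(1+\log(1+|x|))\|u\|_X^2$ and the dominating function $C(1+\log(1+|y|))u^2(y)$ for the dominated-convergence limit, and treat the gradient through the decomposition $K=K_1+K_2$ with a Riesz-kernel estimate for $\cK_1[u^2]$ and the boundedness of $\partial_i K_2$ from Remark \ref{remark:derivativeaG} for $\cK_2[u^2]$. The only cosmetic differences are that you decompose $K=K_1+K_2$ from the outset (the paper bounds the full $K$ near the diagonal via $|x-y||K(x,y)|\leq C$) and that you invoke local Young/Hölder estimates where the paper uses the Hardy--Littlewood--Sobolev inequality, which are interchangeable here.
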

\begin{proof}
	From the asymptotic description of $K$ in Theorem \ref{thm:asymtoticsGreen} we know the limit 
	$$ l := \lim_{|x-y| \to \infty} \frac{K(x,y)}{\log(1+|x-y|)} > 0, $$
	exists and there are $ R>0 $ and a constant $C>0$ such that for all $x,y \in \Omega$
	$$ |x-y||K(x,y)| < C_1 \quad \text{for } |x-y|<R \quad  \text{and} \quad \frac{|K(x,y)|}{\log(1+|x-y|)} < C_1 \quad \text{for } |x-y| \geq R. $$
	Moreover, we choose $R>0$ in such a way that
	$$ \int_{|x|>R} u^2(x)dx \geq \frac{|u|_2^2}{2} .$$
	On the other hand we use \eqref{eq:log} to derive, for $x \in \Omega$
	\begin{align}\label{eq:boundconv}
		\begin{split}
			&|( \cK [u^2] )(x)| = \left| \int_{\Omega} K(x,y)u^2(y)dy \right| = \left| \left(\int\displaylimits_{|x-y| < R}  +\int\displaylimits_{R<|x-y|} \right)  K(x,y) u^2(y) dy \right| \\
			&\quad \leq C_1 \int_{|x-y|<R} \frac{u^2(y)}{|x-y|}dy + C_1 \int_{R<|x-y|}\log(1+|x-y|)u^2(y) dy \\
			&\quad \leq C_1 \int_{|x-y|<R}\frac{u^2(y)}{|x-y|}dy + C_1 \int_{R<|x-y|} \left[ \log(1+|x|) + \log(1+|y|) \right] u^2(y)dy \\
			&\quad \leq  C_1\int_{|x-y|<R}\frac{u^2(y)}{|x-y|}dy +C_1\log(1+|x|)|u|_2^2 + C_1 |u|_*^2  \\
			&\quad \leq C_1 \left( \int_{|x-y|<R}\frac{dy}{|x-y|^2} \right)^{\frac{1}{2}} |u|_4^2 + C_1\log(1+|x|)|u|_2^2 + C_1 |u|_*^2 \\
			&\quad \leq C_2(1+\log(1+|x|))\|u\|_X^2
		\end{split}
	\end{align}
	This shows that $\cK u^2 \in L_{loc}^\infty(\Omega)$. We also extract the inequality
	\begin{equation}\label{eq:int_less_R}
	\left| \int_{|x-y|<R}K(x,y)u^2(y) dy \right| \leq C |u|_4^2.
	\end{equation}
	Now, we prove the asymptotics. Assume that $u \neq 0$. Then we observe that, for all $y \in \Omega$ and $|x|\geq 1$ we have
	\begin{align*}
		&\left|\frac{K(x,y)}{\log(1+|x|)} \right|1_{\{|x-y|>R\}} = \left|\frac{K(x,y)}{\log(1+|x-y|)} \frac{\log(1+|x-y|)}{\log(1+|x|)} \right| 1_{\{|x-y|>R\}} \\
		&\qquad \leq C_1 \frac{\log(1+|x|) + \log(1+|y|)}{\log(1+|x|)} 1_{\{|x-y|>R\}} \leq C_2(1 + \log(1+|y|))1_{\{|x-y|>R\}},
	\end{align*}
	then, we multiply by $u^2$ to obtain
	$$ \left|\frac{K(x,y)}{\log(1+|x|)} u^2(y) \right|1_{\{|x-y|>R\}} \leq C_2(1+\log(1+|y|))u^2(y) \in L^1(\Omega).$$
	Thereby, by the dominated convergence theorem, we see that
	\begin{align*}
		\lim_{|x| \to \infty} \int_{R < |x-y|} \frac{ K(x,y)u^2(y)}{\log(1+|x|)} dy &= \lim_{|x| \to \infty} \int_{R < |x-y|} \frac{K(x,y)}{\log|x-y|}\frac{\log|x-y|}{\log(1+|x|)} u^2(y) dy\\
		 &= l \int_{R <|x-y|} u^2(y) dy.
	\end{align*}
	Then, together with \eqref{eq:int_less_R} this means that
	$$ \lim_{|x| \to \infty} \frac{\cK [u^2](x)}{\log(1+|x|)} = \lim_{|x| \to \infty} \int_{R < |x-y|} \frac{ K(x,y)u^2(y)}{\log(1+|x|)} dy \geq \frac{l |u|_2^2}{2} >0.$$
	
	Now, we show that $\cK [u^2]$ is weakly differentiable and $\nabla(\cK[ u^2]) \in L^2_{loc}(\Omega)$. First we recall the split $K = K_1 + K_2$, where $K_2 \in C^\infty(\overline{\Omega}^2)$ and $K_1(x,y) = -(4 \pi |x-y|)^{-1}$ .
	
	First, we observe that
	$$ (\cK_1 [u^2]) (x) := \int_\Omega K_1(x,y)u^2(y) dy = -\frac{1}{4 \pi }\int_\Omega \frac{u^2(y)}{|x-y|} dy, $$
	so that $\cK_1 [u^2](x)$ is weakly differentiable and
	$$ \partial_i (\cK_1 [u^2])(x) = \frac{1}{4 \pi }\int_\Omega \frac{(x_i-y_i) u^2(y)}{|x-y|^3} dy, $$
	and thus, by the Hardy-Littlewood-Sobolev inequality
	\begin{align*}
		\left| \int_\Omega \left(\int_\Omega \frac{(x_i-y_i)u^2(y)}{|x-y|^3} dy\right)^2 dx \right| &\leq \int_\Omega \left(\int_\Omega \frac{u^2(y)}{|x-y|^2}dy\right)^2dx = \int_\Omega (I_1 \ast u^2|_\Omega)^2(x) dx \\
		&\leq \int_{\R^3}(I_1 \ast u^2|_\Omega)^2(x) dx \leq C |u|_{12/5}^4,
	\end{align*}
	where 
	$$u^2|_\Omega(x) := \begin{cases}
		u^2(x) \quad \text{if } x \in \Omega,\\
		0 \quad \text{otherwise,}
	\end{cases}
	$$
	and $I_1$ denotes the Riesz potential $I_\alpha$ for $\alpha = 1$, see \cite{hlp}.
	Thus $\partial_i (\cK_1 [u^2]) \in L^2(\Omega)$. Similarly, we define $\cK_2 [u^2]$. Now, we recall from Remark \ref{remark:derivativeaG} that 
	$$\frac{d}{dx_i} K_2(x,y) \to 0 \quad \text{as } |x-y| \to \infty,$$
	and since $K_2 \in C^\infty(\overline{\Omega}^2)$, it follows that
	$$ \partial_i (\cK_2 [u^2])(x)= \int_\Omega \frac{d}{dx_{i}} K_2(x,y)u^2(y) dy.$$
	Moreover, since $\frac{d}{d x_i} K_2(x,y)$ is continuous over $\overline{\Omega}^2$ and $\frac{d}{d x_i} K_2(x,y) \to 0$ as $|x-y| \to \infty$ from Remark \ref{remark:derivativeaG}, we have the estimate
	$$ \left| \int_\Omega \frac{d}{dx_i} K_2(x,y)u^2(y) dy \right| \leq \int_\Omega \left| \frac{d}{dx_i} K_2(x,y)u^2(y) \right| \leq C \int_\Omega u^2(y) dy = C'|u|_2^2.$$
	This implies that $\partial_i(\cK_2[ u^2]) \in L^2_{loc}(\Omega)$ and putting $K$ back together we conclude that $\partial_i(\cK [u^2]) \in L^2_{loc}(\Omega) $.
\end{proof}
As a consequence of \eqref{eq:boundconv} we have
\begin{corollary}
	For all $u \in X$, we have the following estimate
	$$ |(\cK [u^2])(x)| \leq [C + \log(1+|x|)]\| u\|_X^2, $$
	for some constant $C>0$.
\end{corollary}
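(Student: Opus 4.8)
The estimate is precisely the last line of the chain of inequalities \eqref{eq:boundconv}, which is established in the proof of Lemma \ref{lemma:convolution} and holds for every $u \in X$ and every $x \in \Omega$; so the plan is essentially to invoke it. For orientation, I would recall that the bound is assembled from two regimes. First I would split $\cK[u^2](x) = \int_\Omega K(x,y)u^2(y)\,dy$ over $\{|x-y| < R\}$ and $\{|x-y| \ge R\}$, with $R$ chosen as in Theorem \ref{thm:asymtoticsGreen} so that $|x-y|\,|K(x,y)| \le C$ for $|x-y| < R$ and $|K(x,y)| \le C\log(1+|x-y|)$ for $|x-y| \ge R$ (the former coming from the $|x-y|^{-1}$ singularity of $K_1$ together with the boundedness of $K_2$ on $\overline{\Omega}^2$, the latter from the asymptotics $K_2(x,y)\sim\log(1+|x-y|)$). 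On the near-diagonal piece I would use Hölder's inequality, the integrability of $|z|^{-2}$ on $B_R \subset \R^3$, and the Sobolev embedding $H^1_p(\Omega)\hookrightarrow L^4(\Omega)$ to bound the contribution by $C|u|_4^2 \le C\|u\|_X^2$; on the far piece I would use the subadditivity $\log(1+|x-y|)\le \log(1+|x|)+\log(1+|y|)$ from \eqref{eq:log} to bound it by $C\log(1+|x|)\,|u|_2^2 + C|u|_*^2 \le C\bigl(1+\log(1+|x|)\bigr)\|u\|_X^2$. Summing the two contributions gives $|\cK[u^2](x)| \le C\bigl(1+\log(1+|x|)\bigr)\|u\|_X^2$, which is the asserted bound.

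There is no genuine obstacle here, since every inequality required has already been carried out in \eqref{eq:boundconv}; the only points worth a remark are that the control $|u|_4 \le C\|u\|_X$ relies on the three-dimensional-type Sobolev embedding for the slab $\Omega = \R^2 \times (-\ell,\ell)$ (e.g.\ via interpolation between $L^2$ and $L^6$) rather than a planar one, and that the constant multiplying $\log(1+|x|)$ is a generic constant in the sense of the convention announced in the introduction.
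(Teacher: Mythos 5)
Your proposal is correct and coincides with the paper's argument: the corollary is stated there precisely as a consequence of the chain of estimates \eqref{eq:boundconv} in the proof of Lemma \ref{lemma:convolution}, which you reproduce faithfully (near-diagonal part via Hölder, the integrability of $|x-y|^{-2}$ on a ball in $\R^3$ and $|u|_4\le C\|u\|_X$; far part via $\log(1+|x-y|)\le\log(1+|x|)+\log(1+|y|)$). Your closing remark about the generic constant in front of $\log(1+|x|)$ is also the right reading of the statement.
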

We now move on to the full problem
\begin{equation}\label{eq:pde}
	\begin{cases}
	\begin{alignedat}{3}
	\Delta w(x)            &= u^2(x)                 &\quad  &\text{in } \Omega,\\
	w(\cdot,-\ell)            &= w(\cdot,\ell)             &\quad  &\text{in } \R^2,\\
	\partial_3 w(\cdot,-\ell)            &= \partial_3 w(\cdot,\ell)             &\quad  &\text{in } \R^2,
	\end{alignedat}
	\end{cases}
\end{equation}
First, we define the periodic Schwartz space
\begin{align*}
	\mathcal{S}_p(\Omega) &:= \left\{ \varphi \in C^\infty(\R^3) : \varphi(\cdot,x_3) = \varphi(\cdot,x_3 + \ell) \, \forall x_3 \in \R \right.\\ 
	&\quad \left. \sup_{x \in \Omega} |x^\alpha||\partial_\beta \varphi(x)| < \infty, \, \alpha \in \mathbb{Z}_2^m, \beta \in \mathbb{Z}_3^n, m,n \in \mathbb{N} \right\},
\end{align*}
here we use the following multiindex notation: for $x \in \R^3$ and $\alpha = (\alpha_1, \ldots, \alpha_m) \in \mathbb{Z}_2^m$, $\mathbb{Z}_2 = \{1,2\}$, we define $x^\alpha = x_{\alpha_1} \cdots x_{\alpha_m}$. And for $\varphi \in \mathcal{S}_p(\Omega)$ and $\beta = (\beta_1,\ldots,\beta_n) \in \mathbb{Z}_3^n$, $\mathbb{Z}_3 := \{ 1,2,3\}$ we define $\partial_\beta \varphi := \partial_{\beta_1}\cdots \partial_{\beta_n} \varphi$.

We say that $w \in \mathcal{S}_p(\Omega)^*$ is a solution to \eqref{eq:pde} in the sense of distributions if
\begin{align}\label{eq:dist_solution}
	\langle w, \Delta \varphi \rangle_{\mathcal{S}_p(\Omega)} = \int_{\Omega} u^2(y)\varphi(y) dy \qquad \forall \varphi \in \mathcal{S}_p(\Omega).
\end{align}
In particular, if $w:\overline{\Omega} \to \R$ is a measurable function, we say that it is a solution to \eqref{eq:pde} if
$$ \int_\Omega w(y) \Delta \varphi(y) dt = \int_\Omega u^2(y) \varphi(y) dy \qquad \forall \varphi \in \mathcal{S}_p(\Omega). $$
Observe that, if either $w \in C^2(\overline{\Omega})$ or more generally if $w \in W^{2,p}_{loc}(\Omega)$ for some $p \in [1,\infty)$, and it satisfies \eqref{eq:dist_solution}, then, integrating by parts we see that $w$ satisfies the periodic boundary conditions in \eqref{eq:pde} as well; in the classical sense for $w \in C^2(\overline{\Omega})$ and in the sense of traces if $w \in W^{2,p}_{loc}(\Omega)$.
\begin{remark}\label{remark:test}
Let $\varphi \in \mathcal{S}_p(\Omega)$, then we have $\partial_{x_3} \varphi(x',-\ell) = \partial_{x_3}\varphi(x',\ell)$, so that 
$$ \int_{-\ell}^{\ell} \partial_{x_3}^2 \varphi(x',x_3) d x_3 = 0 \quad \forall x' \in \R^2 \quad \text{thus} \quad \int_{\Omega} \partial_{x_3}^2 \varphi(x',x_3) dx = 0,$$
from this it follows that for all $x_3 \in [-\ell,\ell]$
$$ \int_{\R^2} \partial_{x_3}^2 \varphi(x',x_3) dx = 0,$$
which yields $\partial_{x_3}^2 \mathcal{F}_*[\varphi](0,x_3) = \mathcal{F}_*[\partial_{x_3}^2 \varphi](0,x_3) = 0$ for all $x_3 \in [-\ell,\ell]$. Moreover, we have $\cF_*[\varphi] \in S_p(\Omega)$.
\end{remark}

We proceed to the main result of this section
\begin{proof}[Proof of Theorem \ref{thm:wsolution}]
We recall that for all $\nu \in \Omega$, $K(\xi,\nu)$ has the distributional Fourier transform given by
$$ \mathcal{F}_*[K(\cdot,\nu)](\xi) = - e^{-i \nu' \xi'} \frac{e^{|\xi'||\xi_3-\nu_3|} + e^{|\xi'|(2\ell-|\xi_3-\nu_3|)}}{4 \pi |\xi'|(e^{2\ell|\xi'|}-1)}, $$
in the sense of \eqref{eq:distributiondefinition}.
Now, take a test function $\varphi \in S_p(\Omega)$ and write $\varphi =  \cF_*[\phi]$ for some $\phi \in S_p(\Omega)$ with $\partial_{3}^2 \phi(0,\xi_3) = 0$ for all $\xi_3 \in [-\ell,\ell]$, as explained in Remark \ref{remark:test}. In this way, using the properties of the Fourier transform, we have
\begin{align*}
	\langle \Delta w, \varphi \rangle_{S_p(\Omega)} = \langle \Delta w, \cF_*[\phi] \rangle_{S_p(\Omega)} = \langle \partial_{3}^2 \cF_*[w](\xi) - |\xi'|^2 \cF_*[w](\xi), \phi(\xi) \rangle_{S_p(\Omega)}.
\end{align*}
We will work with the right-hand side of this equation. We start computing
\begin{align}
	\begin{split}\label{eq:eq1}
&-\langle |\xi'|^2\cF_*[w](\xi) , \phi(\xi) \rangle_{\mathcal{S}(\Omega)} = -\langle w(\xi),\mathcal{F}_*[|\cdot|^2\phi(\cdot)](\xi) \rangle_{\mathcal{S}(\Omega)} \\
&\quad =- \int_\Omega\int_\Omega  K(\xi,\nu)u^2(\nu)\mathcal{F}_*[|\cdot|^2\phi(\cdot)](\xi) d\xi d\nu.
\end{split}
\end{align}
Now, from $|x'|\phi(0)=0$ and \eqref{eq:distributiondefinition} we see that 
\begin{align*}
	\int_{\Omega} K(\xi,\nu) \mathcal{F}_*[|\cdot|^2\phi(\cdot)](\xi) d\xi &= -\int_\Omega e^{-i \nu' \xi'} \frac{e^{|\xi'||\xi_3-\nu_3|} + e^{|\xi'|(2\ell-|\xi_3-\nu_3|)}}{4 \pi |\xi'|(e^{2\ell|\xi'|}-1)} |\xi'|^2 \phi(\xi)d\xi \\
	&= \int_{\R^2} e^{-i \nu' \xi'} |\xi'|^2 \mathcal{H}_{|\xi'|}[\phi(\xi',\cdot)](\nu_3) \,d\xi',
\end{align*}
where $\mathcal{H}_{|\xi'|}$ is defined in \eqref{eq:operatorK}. With the aid of Lemma \ref{lemma:greenode} we may rewrite the right-hand side as follows
\begin{align*}
	&\int_{\R^2} e^{-i \nu' \xi'} |\xi'|^2 \mathcal{H}_{|\xi'|}[\phi(\xi',\cdot)](\nu_3) \, d\xi' = \int_{\R^2} e^{-i \nu' \xi'} \left[\partial_{\nu_3}^2 \mathcal{H}_{|\xi'|}[\phi(\xi',\cdot)](\nu_3) - \frac{\phi(\xi',\nu_3)}{2\pi}\right] \, d\xi'\\
&\quad =-\int_{\R^2} e^{-i \xi' \nu'}   \partial_{\nu_3}^2 \left( \int_{-\ell}^\ell \frac{e^{|\xi'||\nu_3-\xi_3|} + e^{|\xi'|(2 \ell -|\nu_3-\xi_3|)}}{4 \pi |\xi'|(e^{2 \ell |\xi'|}-1)} \phi(\xi)d\xi_3\right) d\xi' - \cF_*[\phi](\nu) \\
&\quad=- \int_{\R^2} e^{-i \xi' \nu'}  \partial_{\nu_3}^2 \left( \int_{-\ell}^\ell \frac{e^{|\xi'||\xi_3|} + e^{|\xi'|(2\ell -|\xi_3|)}}{4 \pi |\xi'|(e^{2 \ell |\xi'|}-1)} \phi(\xi',\nu_3-\xi_3)d\xi_3\right)d\xi' -  \cF_*[\phi](\nu)\\
&\quad = -\int_{\R^2}\int_{-\ell}^\ell e^{-i \xi' \nu'}  \frac{e^{|\xi'||\xi_3|} + e^{|\xi'|(2 \ell -|\xi_3|)}}{4 \pi |\xi'|(e^{2 \ell |\xi'|}-1)} \partial_{3}^2 \phi(\xi,\nu_3-\xi_3)d\xi_3 d\xi' - \cF_*[\phi](\nu) \\
&\quad =  -\int_{\R^2}\int_{-\ell}^\ell e^{-i \xi' \nu'}  \frac{e^{|\xi'||\nu_3-\xi_3|} + e^{|\xi'|(2 \ell -|\nu_3-\xi_3|)}}{4 \pi |\xi'|(e^{2 \ell |\xi'|}-1)} \partial_{3}^2 \phi(\xi)d\xi_3 d\xi' - \cF_*[\phi](\nu) 
\end{align*}

Here we observe that, since $\partial_{3}^2 \phi(0,\xi_3) = 0$, the map $e^{-i \xi' \nu'}  \frac{e^{|\xi'||\nu_3-\xi_3|} + e^{|\xi'|(2 \ell -|\nu_3-\xi_3|)}}{4 \pi |\xi'|(e^{2 \ell |\xi'|}-1)} \partial_{3}^2 \phi(\xi)$ is integrable in $\Omega$, so we may switch the integration order to obtain
$$ -\int_{\R^2}\int_{-\ell}^\ell e^{-i \xi' \nu'}  \frac{e^{|\xi'||\nu_3-\xi_3|} + e^{|\xi'|(2 \ell -|\nu_3-\xi_3|)}}{4 \pi |\xi'|(e^{2 \ell |\xi'|}-1)} \partial_{3}^2 \phi(\xi)d\xi_3 d\xi' =  \int_{\Omega} K(\xi, \nu) \mathcal{F}_*[\partial_{3}^2 \phi] (\xi) d \xi. $$
We plug this back in \eqref{eq:eq1}, which shows that
\begin{align*}
&-\langle |\xi'|^2\cF_*[w](\xi) , \phi(\xi) \rangle_{S_p(\Omega)} = -\int_\Omega\int_\Omega K(\xi, \nu)u^2(\nu)\mathcal{F}_*[\partial_{3}^2\phi](\xi) d\xi d\nu + \int_\Omega u^2(\nu) \cF_*[\phi](\nu) d\nu \\
&\quad = -\langle w,\mathcal{F}_*[\partial_3^2\phi] \rangle_{S_p(\Omega)} + \langle u^2,\cF_*[\phi] \rangle_{S_p(\Omega)}
\end{align*}
Thus we have
$$  \langle \partial_3^2 \cF_*[w](\xi) -|\xi'|^2 \cF_*[w](\xi) ,\phi \rangle_{S_p(\Omega)}  = \langle  u^2(\xi), \cF_*[\phi] \rangle_{S_p(\Omega)} = \langle  u^2(\xi), \varphi \rangle_{S_p(\Omega)},$$
which is equivalent to
$$ \langle \Delta w,\varphi \rangle_{S_p(\Omega)}  = \langle u^2,\varphi \rangle_{S_p(\Omega)},$$
as we observed at the start of the proof. This shows that $w$ is a solution to \eqref{eq:pde} in the sense of distributions.

Now, since $\cK [u^2] \in W^{1,2}_{loc}(\Omega)$ it follows that $w$ solves the equation $ \Delta w = u^2$ in the weak sense of $ W^{1,2}_{loc}(\Omega)$. Here, we observe that $u^2 \in W^{1,3/2}(\Omega)$, since from the Sobolev embedding $u \in L^s(\Omega)$ with $s \in [2,6]$, then $\partial_i(u^2) = 2 u \partial_i u \in L^{3/2}(\Omega)$ using the Hölder inequality for $u \in L^6(\Omega)$ and $\partial_i u \in L^2(\Omega)$. Elliptic regularity theory implies that $w \in W^{3,3/2}_{loc}(\Omega)$ and finally, we use again the the Sobolev embedding $W^{3,3/2}_{loc}(\Omega) \hookrightarrow C^{0,\alpha}_{loc}(\Omega)$ for all $\alpha \in (0,1)$.

Now, we conclude by showing that the periodic extension $\overline{w}$ is $C^{0,\alpha}_{loc}$ in the domain $\R^2 \times [0,2\ell]$, which covers the entire space $\R^3$ by periodicity. Indeed, we repeat the argument above for the function $u_\tau := T_\tau u \in X$ where $\tau := (0,-\ell)$ and $T_\tau$ is the translation operator in \eqref{eq:translation_op}. By the argument above, we see that $\cK [u_\tau^2] \in C^{0,\alpha}_{loc}(\Omega)$. Now, we we recall from Theorem \ref{thm:asymtoticsGreen} part 1, that the periodical extension $\overline{K} : \R^6_* \to \R$ satisfies $\overline{K} \in C^\infty(\R^3_*)$. Then, we derive the following
$$ \overline{w}(x-\tau) = \int_\Omega \overline{K}(x-\tau,y) u^2(y) dy = \int_\Omega \overline{K}(x,y+\tau) u^2(y) dy = \int_\Omega K(x,y)u_\tau^2(y) dy,$$
and in the second equality we used Corollary \ref{corollary:translation_K}. Thus we have shown, that $T_\tau w = \cK [u_\tau^2]$ thus $\overline{w}$ has the desired property in $\R^2 \times [0,2\ell]$. 
\end{proof}
As a corollary we obtain Item 3 in Theorem \ref{thm:asymtoticsGreen}
\begin{proof}[Proof of Theorem \ref{thm:asymtoticsGreen} continued (Item 2)]
	Now, we conclude the proof of Theorem \ref{thm:asymtoticsGreen}. Let $f \in C^\infty_{c}(\Omega)$. Following the proofs of Lemma \ref{lemma:convolution} and Theorem \ref{thm:wsolution} to $f$ in place of $u^2$ we see that $w := \cK[f]$ is a distributional solution to
	    	\begin{equation*}
		\begin{cases}
			\begin{alignedat}{3}
				\Delta w(x)            &= f(x)             &\quad  &\text{in } \Omega,\\
				w(\cdot,-\ell)            &= w(\cdot,\ell)             &\quad  &\text{in } \R^2,\\
				\partial_{3}w(\cdot,-\ell)&= \partial_{3}w(\cdot,\ell) &\quad  &\text{in } \R^2.
			\end{alignedat}
		\end{cases}
	\end{equation*}
	Then, elliptic regularity theory implies that $w \in C^\infty(\Omega)$ and we may extend the regularity to the periodical extension $\overline{w}:\R^3 \to \R$ as in Theorem \ref{thm:wsolution}. This concludes the proof.
\end{proof}

We now make some observations regarding the relationship of $K$ with the Green functions of the 2- and 3-dimensional Poisson equation. We start with the following 

\begin{theorem}\label{thm:green2d}
	Let $u \in X$ be such that $\partial_{3}u \equiv 0$. Then for almost all $x \in \Omega$ it holds
	$$ \cK[ u^2] (x) = \frac{1}{2 \ell} \int_{\Omega} \log|x'-y'|u^2(y)dy$$
\end{theorem}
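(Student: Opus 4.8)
The statement asserts that when $u$ is independent of $x_3$, the periodic convolution operator $\cK$ collapses to the two-dimensional logarithmic convolution (up to the factor $\frac{1}{2\ell}$, accounting for integration over the $x_3$-slab). The natural strategy is to work on the Fourier side in the $x'$-variable, exactly as in the proof of Theorem \ref{thm:asymtoticsGreen}. Since $u$ does not depend on $x_3$, neither does $u^2$, and hence neither does $w := \cK[u^2]$ (this follows from the translation invariance in Corollary \ref{corollary:translation_K} together with the periodicity: translating in $x_3$ by any amount leaves $u^2$ fixed, hence leaves $w$ fixed). So $w(x) = w(x')$ is effectively a function on $\R^2$, and by Theorem \ref{thm:wsolution} it is a distributional solution of $\Delta w = u^2$, which reduces to the planar equation $\Delta_{x'} w = u^2$ since $\partial_3^2 w \equiv 0$.

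\textbf{Key steps.} First I would establish that $w$ is independent of $x_3$: for $\tau = (0,0,s)$ with $s \in \R$, Corollary \ref{corollary:translation_K} gives $\overline{w}(x - \tau) = \int_\Omega K(x,y)\, \overline{u^2}(y+\tau)\,dy = \int_\Omega K(x,y) u^2(y)\, dy = w(x)$, using that $\overline{u^2}$ is $x_3$-translation invariant. Second, I would pass to the planar Fourier transform: writing $w(x) = w(x')$ and $u^2(x) = g(x')$, Theorem \ref{thm:wsolution} (or the computation in its proof) shows $-|\xi'|^2 \cF[w](\xi') = \cF[g](\xi')$ in the distributional sense, since the $\partial_3^2$ term drops out. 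Third, I would identify $\cF\big(\frac{1}{2\ell}\log|\cdot|\big)$: from Part 2.1 of the proof of Theorem \ref{thm:asymtoticsGreen}, $\cF(|z|_m^{-2}/(8\pi)) = -\frac{1}{4}\log|z|$ for the appropriate regularization $|z|_m^{-2}$, so $\cF(\log|\cdot|)= -2\pi\,|z|_m^{-2}$ up to a delta (equivalently $\Delta_{x'}\log|x'| = 2\pi\delta$ in $\R^2$, i.e. $\log|\cdot|/(2\pi)$ is the fundamental solution). Hence $v(x') := \frac{1}{2\ell}(\log|\cdot| \ast g)(x')$ satisfies $\Delta_{x'} v = \frac{1}{2\ell}\cdot 2\ell \cdot g = g$, matching the equation for $w$ after accounting for the normalization — wait, more carefully: $\Delta_{x'}\big(\frac{1}{2\pi}\log|\cdot| \ast g\big) = g$, and $g = u^2|_\Omega$ integrated over the slab contributes the factor $2\ell$, so the correct normalization producing $\Delta_{x'} v = u^2$ is $v = \frac{1}{2\ell}\log|\cdot| \ast u^2$ where the convolution integral is over $\Omega$; I would verify this factor bookkeeping directly from the explicit Fourier formula \eqref{eq:w_function} by taking the limit $|\xi'|\to 0$ is not needed — instead I use \eqref{eq:distributiondefinition} with $u$ independent of $x_3$ and integrate the kernel $\tfrac{e^{|\xi'||\xi_3-y_3|}+e^{|\xi'|(2\ell-|\xi_3-y_3|)}}{4\pi|\xi'|(e^{2\ell|\xi'|}-1)}$ against $u^2(y)\,dy$ over $y_3 \in (-\ell,\ell)$, which after the elementary integral $\int_{-\ell}^{\ell}(e^{|\xi'||\xi_3-y_3|}+e^{|\xi'|(2\ell-|\xi_3-y_3|)})\,dy_3$ produces a multiple of $|\xi'|^{-2}$ with coefficient exactly $\frac{1}{2\ell \cdot 2\pi}\cdot(\text{const})$, identifying the planar transform of $\frac{1}{2\ell}\log$.

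\textbf{Uniqueness / closing the argument.} Once both $w$ and $v := \frac{1}{2\ell}\int_\Omega \log|x'-y'| u^2(y)\,dy$ are shown to solve $\Delta_{x'} h = u^2$ in $\R^2$ in the distributional sense, their difference is harmonic; but I cannot simply invoke Liouville since neither need be bounded. Instead I would argue at the level of Fourier transforms directly: both $\cF[w]$ and $\cF[v]$ are the (unique, given the chosen regularization $|z|_m^{-2}$) tempered-distribution solutions of $-|\xi'|^2 h = \cF[u^2|_{\text{slab}}]$ with no atom at the origin — the regularized distribution $|z|_m^{-2}$ is precisely the canonical choice pinned down in the proof of Theorem \ref{thm:asymtoticsGreen}, and the Green function $K$ was constructed using exactly this choice (see \eqref{eq:distributiondefinition}), so $w$ inherits it. Hence $\cF[w] = \cF[v]$ as tempered distributions and thus $w = v$ a.e.

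\textbf{Main obstacle.} The delicate point is not the PDE but the normalization and the regularization of $|\xi'|^{-2}$: a naive Fourier inversion of $|\xi'|^{-2}$ in $\R^2$ is ambiguous up to an additive constant (reflecting the additive constant in the fundamental solution $\log$), and $\cK[u^2]$ genuinely grows like $\log(1+|x|)$ by Lemma \ref{lemma:convolution}, so one must track the specific additive normalization. The cleanest route is to avoid picking constants entirely: subtract the two candidate solutions, observe the difference $\phi := w - v$ satisfies $\Delta_{x'}\phi = 0$ distributionally AND, by the asymptotics $w(x) \sim \log(1+|x|)$ (Lemma \ref{lemma:convolution}) together with the matching asymptotics of the planar logarithmic potential $v(x') \sim \log(1+|x'|)$ for $u \neq 0$, the difference $\phi$ grows strictly slower than $\log|x'|$; a harmonic function on $\R^2$ with $\phi(x') = o(\log|x'|)$ is constant. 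To pin the constant to zero I would evaluate both sides at a reference point using the explicit Fourier representation, or more elegantly note that by \eqref{eq:distributiondefinition} the choice of $m$ already sets the additive constant of $w$ to agree with that of $\frac{1}{2\ell}\log$, so the constant vanishes. I expect the harmonic-function growth argument plus the explicit slab integral of the kernel to be the two load-bearing computations; everything else is bookkeeping with the Fourier transform already developed in Section \ref{section:green_function}.
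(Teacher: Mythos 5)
Your computational core coincides with the paper's. The paper also works on the Fourier side, and the two load-bearing facts are exactly the ones you isolate: the slab integral of the kernel in $y_3$, i.e. $\int_{-\ell}^{\ell}\bigl(e^{|\xi'||\xi_3-y_3|}+e^{|\xi'|(2\ell-|\xi_3-y_3|)}\bigr)\,dy_3 = 2\,(e^{2\ell|\xi'|}-1)/|\xi'|$ — equivalently, the fact that the one-dimensional Green operator $\mathcal{H}_{|\xi'|}$ from \eqref{eq:operatorK} and Lemma \ref{lemma:greenode} applied to a function constant in $x_3$ returns $-(2\pi|\xi'|^2)^{-1}$ times that constant — and the identification of the regularized $|\xi'|^{-2}$, with the $m$ fixed in \eqref{eq:distributiondefinition}, as the planar transform of the logarithm. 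Where you diverge is the closing step: the paper does not pass through any uniqueness statement or harmonic-function argument. It tests $\cK[u^2]$ against an arbitrary $\varphi=\cF_*[\phi]$, $\varphi\in C^\infty_c(\Omega)$, unwinds \eqref{eq:distributiondefinition}, performs the $y_3$-integration via $\mathcal{H}_{|\xi'|}$ (using that $u^2(\nu',\cdot)$ is constant), converts the resulting regularized $|\xi'|^{-2}$-pairing into the logarithmic convolution, and concludes the a.e. identity because $\varphi$ is arbitrary; the factor $1/(2\ell)$ appears only at the end, when the $\xi_3$-independent integrand is rewritten as an integral over $\Omega$.

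The soft spots are in your proposed ways of finishing. First, the kernel of multiplication by $|\xi'|^2$ on tempered distributions is spanned by $\delta$ \emph{and its first-order derivatives}, so ``no atom at the origin'' must also exclude dipole terms, and determining a priori which representative $\cF[w]$ actually is amounts to the direct test-function computation anyway. Second, in the harmonic-difference route the claim $w-v=o(\log|x'|)$ is not available: Lemma \ref{lemma:convolution} and Theorem \ref{thm:asymtoticsGreen} only give asymptotics up to an unspecified positive constant, so you may only assume $O(\log)$ growth of the difference; this still forces the difference to be constant (a harmonic function on $\R^2$ of logarithmic growth is constant, by interior gradient estimates), but the constant must then be shown to vanish, and ``the choice of $m$ already sets the additive constant'' is an assertion, not an argument — pinning it down requires exactly the explicit evaluation against test functions that the paper performs. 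In short, your plan is correct in substance and its two ``load-bearing computations'' are the paper's, but the uniqueness superstructure buys nothing: once those computations are carried out in duality with $\varphi=\cF_*[\phi]$, the proof is already complete, and that is precisely the paper's proof.
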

\begin{proof}
	Let $\varphi \in C^{\infty}_c(\Omega)$, and set $\varphi(x) = \cF_*[\phi](x)$, where $\phi \in S_p(\Omega)$. Then from Lemma \ref{lemma:greenode}
	\begin{align*}
		&\langle \cK[ u^2], \varphi \rangle_{S_p(\Omega)} = \langle \cK [u^2], \cF_*[\phi] \rangle_{S_p(\Omega)} = \int_{\Omega} \int_{\Omega} K(\xi, \nu) u^2(\nu) \cF_*[\phi](\xi) d \xi d\nu \\
		&\quad = -\int_{\Omega} \int_{\Omega} \frac{e^{-i \xi' \nu'} (e^{|\xi'||\xi_3 - \nu_3|} + e^{|\xi'|(2\ell-|\xi_3-\nu_3|)})}{4 \pi |\xi'|(e^{2\ell|\xi'|}-1)}u^2(\nu)(\phi(\xi)-\phi(0,\xi_3)1_{|\xi'|<m}(\xi)) d\xi d\nu \\
		&\quad = \int_{\Omega} \int_{\R^2}(\phi(\xi)-\phi(0,\xi_3)1_{|\xi'|<m}(\xi))|\xi'|^2e^{-i\nu' \xi'} \mathcal{H}_{|\xi'|}\left[ \frac{u^2(\nu', \cdot)}{|\xi'|^2}\right](\xi_3) d\nu' d\xi \\
		&\quad = - \int_{\Omega} \int_{\R^2} (\phi(\xi)-\phi(0,\xi_3)1_{|\xi'|<m}(\xi)) e^{-i \xi' \nu'} \frac{u^2(\nu',\xi_3)}{2\pi |\xi'|^2} d\nu' d\xi\\
		&\quad = -\int_{\R^2} \int_{-\ell}^{\ell}\int_{\R^2} e^{-i \xi' \nu'} \frac{\phi(\xi)-\phi(0,\xi_3)1_{|\xi'|<m}(\xi)) }{2\pi |\xi'|^2} u^2(\nu',\xi_3) d\xi' d\xi_3 d\nu' \\
		&\quad =  \int_{\R^2} \int_{-\ell}^{\ell} \int_{\R^2} \log|\xi'-\nu'| \cF_*[\phi](\xi) u^2(\nu',\xi_3) d\xi' d\xi_3 d\nu' = \int_{\R^2} \int_{\Omega} \log|\xi'-\nu'| \cF_*[\phi](\xi) u^2(\nu',\xi_3) d\xi d\nu' \\
		&\quad = \int_{\Omega} \int_{\R^2} \log|\xi'-\nu'|\varphi(\xi) u^2(\nu',\xi_3)  d\nu' d\xi = \frac{1}{2 \ell} \int_\Omega \int_\Omega  \log|\xi'-\nu'|\varphi(\xi) u^2(\nu',\xi_3)  d\nu d\xi
	\end{align*}
Since $\varphi \in C^\infty_{c}$ is arbitrary, the identity holds a.e. in $\Omega$.
\end{proof}
Thus the Green function $K$ collapses to the 2-dimensional fundamental solution to the Poisson equation when convoluted with a function independent of $x_3$. Now, we prove that, as $\ell \to \infty$, it converges in an appropriate sense to the $3$-dimensional fundamental solution. 

To make emphasis on the dependence of $K$ on $\ell$, we will write $K(x,y;\ell)$ to denote the Green function from Theorem \ref{thm:asymtoticsGreen} in the domain $\Omega_\ell := \R^2 \times [-\ell,\ell]$ and $ \cK_\ell $ for the corresponding operator
$$ \cK_{\ell}[ f] (x) := \int_\Omega K(x,y;\ell) f(y) dy \in [-\infty,\infty].$$

\begin{theorem}\label{thm:Green3dim}
	Let $\varphi \in C^{\infty}_{c}(\R^3)$. Then we have
	$$ \int_{\Omega_\ell} \int_{\Omega_\ell} K(x,y;\ell)\varphi^2(y) \varphi^2(x) dy dx \to - \frac{1}{4 \pi} \int_{\R^3}\int_{\R^3} \frac{\varphi^2(y) \varphi^2(x)}{|x-y|}dy  \quad \text{as } \ell \to \infty.$$
\end{theorem}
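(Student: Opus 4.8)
The plan is to split $K(\cdot,\cdot;\ell)=K_1+K_2(\cdot,\cdot;\ell)$ as in Theorem~\ref{thm:asymtoticsGreen}, where $K_1(x,y)=-\tfrac{1}{4\pi|x-y|}$ does not depend on $\ell$, and to treat the two contributions separately. Fix $M>0$ with $\operatorname{supp}\varphi\subset\R^2\times(-M,M)$. For every $\ell>M$ the support condition gives
\[
\int_{\Omega_\ell}\!\int_{\Omega_\ell}K_1(x,y)\,\varphi^2(y)\varphi^2(x)\,dy\,dx
=-\frac{1}{4\pi}\int_{\R^3}\!\int_{\R^3}\frac{\varphi^2(y)\varphi^2(x)}{|x-y|}\,dy\,dx,
\]
a finite number independent of $\ell$ (finiteness is immediate since $\varphi^2$ is bounded with compact support and $|x-y|^{-1}\in L^1_{\mathrm{loc}}(\R^3)$; alternatively the Hardy--Littlewood--Sobolev inequality applies). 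It therefore remains to prove $I_\ell:=\int_{\Omega_\ell}\!\int_{\Omega_\ell}K_2(x,y;\ell)\varphi^2(y)\varphi^2(x)\,dy\,dx\to0$ as $\ell\to\infty$.

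For the $K_2$-part I would pass to the planar Fourier transform. By the proof of Theorem~\ref{thm:asymtoticsGreen} and Remark~\ref{remark:derivativeaG} one has $K_2(x,y;\ell)=\widetilde K_2(x-y)$ with
\[
\mathcal{F}_*[\widetilde K_2](\xi)=-\frac{\cosh(|\xi'|\,|\xi_3|)}{4\pi|\xi'|\,(e^{2\ell|\xi'|}-1)},
\]
understood, as in \eqref{eq:distributiondefinition}, through the regularisation that subtracts the value at $\xi'=0$ on the ball $\{|\xi'|\le m\}$ (near the origin the kernel has a non-integrable $|\xi'|^{-2}$ singularity). Writing $g:=\varphi^2$, $g_a:=g(\cdot,a)\in C_c^\infty(\R^2)$, $s:=x_3-y_3$, and $\Psi_{a,b}(z'):=\int_{\R^2}g_a(z'+w')g_b(w')\,dw'\in C_c^\infty(\R^2)$, Plancherel's identity in $x'$ (applied for each fixed pair $x_3=a$, $y_3=b$) turns the inner double integral into a regularised pairing of $\mathcal{F}_*[\widetilde K_2](\cdot,a-b)$ against the Schwartz function $\psi_{a,b}:=2\pi\,\widehat{g_a}(-\cdot)\,\overline{\widehat{g_b}(-\cdot)}$ (the inverse Fourier transform of $\Psi_{a,b}$). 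The decisive structural observation is that, because $\operatorname{supp}\varphi$ has bounded $x_3$-projection, the $x_3,y_3$-integrations in $I_\ell$ range over $[-M,M]$ only, so $|s|\le 2M$ is bounded \emph{uniformly in} $\ell$, and $\{\widehat{g_a}:|a|\le M\}$ is a bounded family in $\mathcal S(\R^2)$.

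I would then send $\ell\to\infty$ by dominated convergence, splitting the $\xi'$-integral over $\{|\xi'|>m\}$ and $\{|\xi'|\le m\}$. On $\{|\xi'|>m\}$ the subtraction is inactive; $\cosh(|\xi'||s|)/\bigl(|\xi'|(e^{2\ell|\xi'|}-1)\bigr)\to0$ pointwise, and for $\ell\ge M$ it is bounded by $\cosh(2M|\xi'|)/\bigl(|\xi'|(e^{2M|\xi'|}-1)\bigr)\le C|\xi'|^{-1}$, so the integrand is dominated by $C|\xi'|^{-1}$ times a fixed Schwartz function, which is integrable on $\{|\xi'|>m\}$ and independent of $\ell,a,b$. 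On $\{|\xi'|\le m\}$ the bracket $\psi_{a,b}(\xi')-\psi_{a,b}(0)$ is $O(|\xi'|)$ uniformly in $a,b$, while $1/\bigl(|\xi'|(e^{2\ell|\xi'|}-1)\bigr)\le 1/(2\ell|\xi'|^{2})$ because $e^{t}\ge1+t$; hence this part is $O(1/\ell)$. Both contributions therefore tend to $0$ uniformly over $(a,b)\in[-M,M]^2$, and integrating over that square gives $I_\ell\to0$. Together with the $K_1$-computation this is the theorem.

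The main obstacle is precisely the low-frequency behaviour of $\mathcal{F}_*[K_2]$ near $\xi'=0$: there the kernel is only a tempered distribution because of the non-integrable $|\xi'|^{-2}$ singularity, so one must work through the regularised pairing of \eqref{eq:distributiondefinition} and make simultaneous use of the two facts that the coefficient of the $|\xi'|^{-2}$ singularity is $O(1/\ell)$ and that $\psi_{a,b}(\xi')-\psi_{a,b}(0)$ vanishes to first order at the origin; this is also where the hypothesis that $\varphi$ has compact support in $\R^3$ (not merely in $\R^2$) enters, via the uniform bound $|s|\le 2M$. Heuristically the whole computation expresses that the Fourier-series-in-$x_3$ (equivalently, method-of-images) representation of $K(\cdot,\cdot;\ell)$ is, as $\ell\to\infty$, a Riemann sum for the integral representation $-\tfrac{1}{4\pi}|x-y|^{-1}=-\tfrac{1}{4\pi^{2}}\int_{\R}K_0(|\mu|\,|x'-y'|)\,e^{i\mu(x_3-y_3)}\,d\mu$ of the Newtonian kernel, $K_0$ being the modified Bessel function; the $x_3$-independent Fourier mode contributes the $O(1/\ell)$ logarithmic term isolated in Theorem~\ref{thm:green2d}, which drops out in the limit.
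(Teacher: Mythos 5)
Your proposal is correct, and its engine is the same as the paper's: both arguments live on the planar Fourier side, use the regularised pairing \eqref{eq:distributiondefinition} to make sense of the $|\xi'|^{-2}$ singularity at $\xi'=0$, and pass to the limit $\ell\to\infty$ by dominated convergence with an $\ell$-uniform majorant that exploits precisely the fact you single out, namely that $\operatorname{supp}\varphi$ has bounded $x_3$-projection (the paper's bound $e^{2\ell|y'|}/(e^{2\ell|y'|}-1)\le c$ for $|y'|>1$, $\ell>k$, plays the role of your $\cosh(2M|\xi'|)/(e^{2M|\xi'|}-1)\le C/|\xi'|$ estimate). The organisation differs: the paper keeps the full kernel $K(\cdot,\cdot;\ell)$, applies \eqref{eq:distributiondefinition} to $\varphi^2=\cF_*[\phi]$, passes to the limit in the whole expression and then identifies the limiting integral with the Newtonian energy through the Hankel identity \eqref{eq:hankel}; you instead split $K=K_1+K_2$, observe that the $K_1$-contribution already equals the claimed limit exactly for $\ell>M$, and reduce the theorem to showing that the $K_2$-contribution vanishes, which you do slice by slice in $(x_3,y_3)$ via Plancherel, a dominated-convergence argument on $\{|\xi'|>m\}$ and an explicit $O(1/\ell)$ bound on $\{|\xi'|\le m\}$ coming from $e^t-1\ge t$ and the first-order vanishing of $\psi_{a,b}(\xi')-\psi_{a,b}(0)$. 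What your reorganisation buys is an explicit treatment of the low-frequency/subtraction term, which in the paper's final limit display is absorbed without comment, and it avoids invoking \eqref{eq:hankel} at the limiting stage since the $K_1$-computation is purely spatial; what it costs is that you must quote the Fourier representation of $K_2$ from Remark \ref{remark:derivativeaG} and be careful that the regularisation convention you use for $\mathcal{F}_*[\tilde K_2]$ (subtraction at the radius $m$ fixed in the construction of $K$) is the one actually inherited from Theorem \ref{thm:asymtoticsGreen}, since changing the renormalisation of the $\xi'=0$ singularity shifts each pairing by a multiple of $\psi_{a,b}(0)$ and hence would shift the limit by an $\ell$-independent constant; this is, however, the same convention and the same level of bookkeeping as in the paper's own proof, so it is a point of care rather than a gap.
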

\begin{proof}
Let $x \in \R^3$, then, for $\ell$ large enough we have $x \in \Omega_\ell$ and $\text{supp }\varphi \subset \Omega_\ell$. Then $\varphi^2 = \cF_*[\phi]$ for some $\phi \in S(\R^3)$ with $\text{supp } \phi \subset \Omega_\ell$. Now, we have the following
\begin{align}\label{eq:Gconvphi}
\begin{split}
	&\cK_\ell [\varphi^2](x) = \cK_\ell[ \cF_*[\phi]] (x)  = \int_{\Omega_\ell} K(x,y;\ell) \cF_*[\phi](y) dy \\ 
	&\quad = -\int_{-\ell}^{\ell} \int_{\R^2} \frac{ e^{-i x' y'} (e^{|y'||y_3-x_3|} +e^{|y'|(2\ell -|y_3- x_3|)}) }{4 \pi |y'|(e^{2\ell |y'|}-1)} (\phi(y) - \phi(0,y_3)1_{\{|y'|<m\}}(y)) d y' dy_3\\ 
	&\quad = -\int_{\R^3}  \frac{ e^{-i x' y'} (e^{|y'||y_3-x_3|} +e^{|y'|(2\ell -|y_3- x_3|)}) }{4 \pi |y'|(e^{2\ell |y'|}-1)}  (\phi(y) - \phi(0,y_3)1_{\{|y'|<m\}}(y)) d y.
\end{split}
\end{align}
Then, since $\cK_\ell[\varphi^2 ]\in L^\infty_{loc}(\R^3)$ we can multiply \eqref{eq:Gconvphi} by $\varphi^2$ and integrate over $\R^3$ to obtain
\begin{align}\label{eq:doubleintegralG}
\begin{split}
		&\int_{\R^3} \cK_\ell [\varphi^2](x) \varphi^2(x) dx  = \int_{\R^3} \int_{\R^3} K(x,y;\ell)\varphi^2(x) \varphi^2(y) dx dy \\
	&\quad =- \int_{\R^3} \int_{\R^3}  \frac{ e^{-i x' y'} (e^{|y'||y_3-x_3|} +e^{|y'|(2\ell -|y_3- x_3|)}) }{4 \pi |y'|(e^{2\ell |y'|}-1)}  (\phi(y) - \phi(0,y_3)1_{\{|y'|<m\}}(y)) \varphi^2(x) dx dy.
\end{split}
\end{align}
Now, there is $k \in \mathbb{N}$ such that $\text{supp}(\phi) \subset \R^2 \times [-k,k] \subset \Omega_\ell$. We also assume that $k \geq 1$ and fix $c >1$ such that $e^{2k} \geq c/(c-1)$. In this way we see that for $|y'| >1$ and $\ell > k$ it holds $ \frac{c}{c-1} \leq e^{2k} \leq e^{2\ell|y'|} $, which after rearranging yields
$$ \frac{e^{2 \ell |y'|}}{e^{2 \ell |y'|}-1} \leq c.$$
Then, let $\chi$ be the indicator function of $\{x \in \R^3 : |x'|< m, \, |x_3| < k\}$. In this way we have the following estimate
\begin{align*}
	&\left|\varphi^2(x) \frac{ e^{-i x' y'} (e^{|y'||y_3-x_3|} +e^{|y'|(2\ell -|y_3- x_3|)}) }{4 \pi |y'|(e^{2\ell |y'|}-1)} \left( \phi(y) - \phi(0,y_3)1_{\{|y'|<m\}}(y) \right) \right| \\
&\quad \leq \varphi^2(x) \left( \frac{ e^{|y'||y_3-x_3|} +e^{|y'|(2\ell -|y_3- x_3|)} }{4 \pi |e^{2\ell |y'|}-1|}  |\partial_{x'}\phi|_{\infty} \chi(y) +   \frac{ e^{|y'||y_3-x_3|} +e^{|y'|(2 \ell -|y_3- x_3|)} }{4 m \pi |e^{2\ell |y'|}-1|}(1-\chi(y))\phi(y)\right) \\
&\quad \leq \varphi^2(x) \frac{ e^{2\ell|y'|} }{2 \pi |e^{2\ell |y'|}-1|} \left(|\partial_{x'}\phi|_{\infty} \chi(y)  +  \frac{ \phi(y) }{ m} \right) \\
&\quad \leq \frac{c \varphi^2(x)}{2\pi}  \left(|\partial_{x'}\phi|_{\infty} \chi(y) + \frac{\phi(y)}{m} \right)  \in L^1(\R^3 \times \R^3).
\end{align*}
Then we may apply the dominated convergence theorem in \eqref{eq:doubleintegralG} together with \eqref{eq:hankel} to obtain
\begin{align*}
	\lim_{\ell \to \infty} \int_{\R^3} \int_{\R^3}  K(x,y;\ell)\varphi^2(x) \varphi^2(y) dx dy &= -\frac{1}{4 \pi} \int_{\R^3} \int_{\R^3} \frac{e^{-i x' y'} e^{-|y'||y_3 - x_3|}}{|y'|}\phi(y) \varphi^2(x)  dx dy \\
	&\quad = - \frac{1}{4 \pi} \int_{\R^3} \int_{\R} \int_{\R^2} \mathcal{F}_*\left( \frac{1}{|\cdot - x|} \right)(y) \phi(y) \varphi^2(x) dy' dy_3 dx\\
	&\quad = -\frac{1}{4 \pi }\int_{\R^3} \int_{\R^3} \frac{\varphi^2(y) \varphi^2(x)}{|y - x|} d y dx.
\end{align*}
\end{proof}

\section{The variational setting for the Choquard equation}\label{sec:variational_setting}
In this section, we set up our variational framework and prove some initial technical results. For the initial part of this section, we only assume $a \in L^{\infty}(\Omega)$ and $\text{ess inf}_{\R^2}a > 0$. Later in this section we will introduce further symmetry conditions. 

In the space $H^1_{p}(\Omega)$ we consider the norm
$$ \|u\|_a^2 := \int_{\Omega} a(x)u^2 + |\nabla u|^2 dx, $$
which is equivalent to the usual Sobolev norm in this space. Then, we introduce the subspace
$$ X := \{u \in H^1_{p}(\Omega) : |u|_* < \infty \} \quad \text{where} \quad \|u\|_X^2 := \|u\|_{a}^2 + |u|_*^2. $$
Then we recall the decomposition of $K$ from Theorem \ref{thm:asymtoticsGreen}: we write $K = K_1 + K_2$, where \mbox{$K_1(x,y) := -(4\pi |x-y|)^{-1}$} and $K_2 \in C^{\infty}(\overline{\Omega}^2)$ satisfies
\begin{align}\label{eq:Gasymptotic}
 \frac{1}{C_K} \leq  \frac{K_2(x,y)}{\log(1+|x-y|)} \leq C_K \quad \text{ for } |x-y| \geq R,
\end{align}
for some constants $C_K > 0$, $R>1$. In particular we see that $K(x,y) > 0$ if $|x-y| \geq R$.

In the same spirit as the variational setting in \cite{cw} we consider the bilinear forms
\begin{align*}
	B_i(u,v) := \int_{\Omega} \int_{\Omega} K_i{}(x,y)u(x)v(y) dx dy  \quad \text{for } i=1,2, \qquad B_0(u,v) := B_1(u,v) + B_2(u,v),
\end{align*}
whenever they are defined.

Now, if $u,v \in L^2(\Omega)$, from Hölder's inequality and the Hardy-Littlewood-Sobolev inequality \cite{mvs2} we see that
\begin{align}\label{eq:B1ineq}
	\begin{split}
	|B_1(u,v)| &= \frac{1}{4 \pi} \int_{\Omega} \int_{\Omega} \frac{|u(x) v(y)|}{|x-y|} dx dy  \leq C |u|_{6/5} |v|_{6/5}
	\end{split}
\end{align}
for some constant $C>0$.
On the other hand, using the Cauchy-Schwarz inequality
\begin{align}\label{eq:B2ineq}
	\begin{split}
	&\left| \int_{\Omega} \int_{\Omega} \frac{K_2(x,y)}{\log(1+|x-y|)} \log(1+|x-y|)u(x)v(x)w(y)z(y) 1_{\{|x-y|>R\}}(x,y) \, dxdy \right| \\
	&\quad \leq C_K \int_{\Omega} \int_{\Omega} \log(1+|x-y|)|u(x)v(x)w(y)z(y)|dx dy \\
	&\quad \leq C_K \int_{\Omega} \int_{\Omega} \left[ \log(1+|x|) + \log(1+|y|) \right] |u(x)v(x)w(y)z(y)|dx dy \\
	&\quad \leq C_K ( |u|_*|v|_*|w|_2|z|_2 + |u|_2|v|_2|w|_*|z|_*).
	\end{split}
\end{align}
Further, with a second application of the Cauchy-Schwarz inequality
\begin{align}\label{eq:B2ineq2}
	\begin{split}
		&\left| \int_\Omega \int_\Omega K_2(x,y)1_{|x-y|<R} u(x)v(x)w(y)z(y)dx dy \right| \\
		&\quad \leq \sup_{|x-y|<R}|K_2(x,y)| \int_\Omega \int_\Omega |u(x)v(x)w(y)z(y)| dx dy \\
		&\quad  \leq C |u|_2|v|_2|w|_2|z|_2
	\end{split}
\end{align}

So, putting \eqref{eq:B1ineq}, \eqref{eq:B2ineq} and \eqref{eq:B2ineq2} together, we see that, there is $C>0$ such that for all $u,v \in X$
\begin{equation}\label{eq:estimatesGi}
	|B_1(u^2,v^2)| \leq C |u|_{12/5}^2 |v|_{12/5}^2 \qquad  |B_2(u^2,v^2)| \leq C (|u|_*^2|v|_2^2 + |u|_2^2|v|_*^2),
\end{equation}
and we set $V_i(u) := B_i(u^2,u^2)$. 

Now, observe, from \eqref{eq:Gasymptotic}, that
\begin{align}\label{eq:B2_lower_bound}
	\begin{split}
	B_2(u^2,v^2) &= \left( \int\displaylimits_{\substack{x,y \in \Omega \\ |x-y|<R}} + \int\displaylimits_{\substack{x,y \in \Omega \\ |x-y|>R}} \right)K_2(x,y)u^2(x)v^2(y) dx dy\\
	&\geq  - \sup\displaylimits_{\substack{x,y \in \Omega \\ |x-y|<R}} |K(x,y)| |u|_2^2 |v|_2^2 + C_K^{-1} \int\displaylimits_{\substack{x,y \in \Omega \\ |x-y|>R}}\log|x-y| u^2(x)v^2(y) dx dy \\
	& \geq -C |u|_2^2|v|_2^2 \quad \forall u,v \in X,
	\end{split}
\end{align}
where we use the fact that $K(x,y) = K(x-y,0)$ from Corollary \ref{corollary:translation_K} and $K \in C^\infty(\overline{ \Omega }^2 )$ to bound $K(x,y)$ in $\{x,y \in \Omega : |x-y| < R\}$.

We have the following property
\begin{lemma}\label{lemma:B2lemma}
	Let $(u_n)_n$ be a sequence in $L^2(\Omega)$ such that $u_n \rightarrow u \in L^2(\Omega) \setminus \{0\}$ pointwise a.e. on $\Omega$. Moreover, let $(v_n)_n$ be a bounded sequence in $L^2(\Omega)$ such that
	\[
	\sup_{n\in\mathbb{N}} B_2(u_n^2, v_n^2) < \infty.
	\]
	Then, there exists $n_0 \in \mathbb{N}$ and $C > 0$ such that $|v_n|_* < C$ for $n \geq n_0$.
	
	If, moreover,
	\[
	B_2( u_n^2, v_n^2) \rightarrow 0 \quad \text{and} \quad |v_n|_2^2 \rightarrow 0 \quad \text{as} \quad n \rightarrow \infty,
	\]
	then
	\[
	|v_n|_* \rightarrow 0 \quad \text{as} \quad n \rightarrow \infty, \quad n \geq n_0.
	\]
\end{lemma}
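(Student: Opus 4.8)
The plan is to exploit the sign structure of $K_2$ recorded in \eqref{eq:Gasymptotic}. Since $K_2(x,y)=\tilde K_2(x-y)$ with $\tilde K_2$ continuous on $\overline{\Omega}$ (Theorem \ref{thm:asymtoticsGreen}), the constant $M_K:=\sup_{|z|\le R}|\tilde K_2(z)|$ is finite; combining $K_2(x,y)\ge C_K^{-1}\log(1+|x-y|)\ge 0$ for $|x-y|\ge R$ with $K_2(x,y)\ge -M_K$ for $|x-y|<R$, one obtains a global lower bound
\[
K_2(x,y)\ \ge\ C_K^{-1}\log(1+|x-y|)-c\qquad\text{for all }x,y\in\overline{\Omega},
\]
with $c:=M_K+C_K^{-1}\log(1+R)$. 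Next I would localize: since $u\in L^2(\Omega)\setminus\{0\}$, there are a bounded measurable set $A\subset\Omega$ and $\delta>0$ with $u^2\ge 2\delta$ on $A$; as $|A|<\infty$, Egorov's theorem provides $A'\subset A$ with $|A'|>0$ on which $u_n\to u$ uniformly, hence an index $n_0$ with $u_n^2\ge\delta$ on $A'$ for all $n\ge n_0$.

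For $n\ge n_0$, inserting the global lower bound into $B_2(u_n^2,v_n^2)=\int_\Omega\int_\Omega K_2(x,y)u_n^2(x)v_n^2(y)\,dx\,dy$ and using $\log(1+|x-y|)\ge 0$ to restrict the resulting positive integral to $x\in A'$,
\[
B_2(u_n^2,v_n^2)\ \ge\ C_K^{-1}\delta\int_\Omega v_n^2(y)\,\psi(y)\,dy\ -\ c\,|u_n|_2^2\,|v_n|_2^2,\qquad \psi(y):=\int_{A'}\log(1+|x-y|)\,dx.
\]
A purely geometric estimate handles $\psi$: if $A'\subset B_\rho(0)$, then for $|y|\ge\rho$ every $x\in A'$ satisfies $|x-y|\ge|y|-\rho$, so $\psi(y)\ge|A'|\log(1+|y|-\rho)\ge|A'|\big(\log(1+|y|)-\log(1+\rho)\big)$, while for $|y|<\rho$ trivially $\psi(y)\ge 0\ge|A'|\log(1+|y|)-|A'|\log(1+\rho)$; hence $\psi(y)\ge|A'|\log(1+|y|)-\mu$ for all $y$, with $\mu:=|A'|\log(1+\rho)$. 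This gives $\int_\Omega v_n^2\psi\ge|A'|\,|v_n|_*^2-\mu|v_n|_2^2$ — in particular $|v_n|_*<\infty$ for $n\ge n_0$, since $B_2(u_n^2,v_n^2)$ is finite by hypothesis — and rearranging produces a constant $C>0$ with
\[
|v_n|_*^2\ \le\ C\Big(B_2(u_n^2,v_n^2)+|v_n|_2^2\big(1+|u_n|_2^2\big)\Big)\qquad\text{for all }n\ge n_0.
\]
Both assertions are now read off this inequality. The first follows because $\sup_n B_2(u_n^2,v_n^2)<\infty$ and the $L^2$-boundedness of $(v_n)$ and of $(u_n)$ bound the right-hand side uniformly for $n\ge n_0$. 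The second follows because, under the extra hypotheses $B_2(u_n^2,v_n^2)\to 0$ and $|v_n|_2^2\to 0$, each term on the right tends to $0$ — the $L^2$-boundedness of $(u_n)$ being what forces $|v_n|_2^2|u_n|_2^2\to 0$ — whence $|v_n|_*\to 0$.

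The only real difficulty lies in the error term $c\,|u_n|_2^2|v_n|_2^2$, which is where the sign-indefinite near-diagonal part of $K_2$ gets absorbed: estimating that part unavoidably uses boundedness of $(u_n)$ in $L^2(\Omega)$. This boundedness is available in every application of the lemma — and I suspect is an implicit hypothesis here — so I would simply use it. Everything else is driven purely by the positivity and logarithmic growth of $K_2$ at infinity, which is exactly the mechanism coupling $B_2$ to the weight $\log(1+|\cdot|)$ defining $|\cdot|_*$.
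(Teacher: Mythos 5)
Your strategy coincides with the paper's in all essentials: Egorov's theorem to produce a set of positive measure on which $u_n^2\ge\delta$ for $n\ge n_0$, the lower bound \eqref{eq:Gasymptotic} on $K_2$, and the comparison of $\log(1+|x-y|)$ with $\log(1+|y|)$ for $y$ far from that set, leading to an estimate of the form $|v_n|_*^2\le C\big(B_2(u_n^2,v_n^2)+|v_n|_2^2\cdot(\dots)\big)$ from which both claims follow. The one substantive difference is the treatment of the near-diagonal region $\{|x-y|<R\}$, where the sign of $K_2$ is not controlled. The paper bounds $B_2(u_n^2,v_n^2)$ from below by the integral restricted to $x\in A\subset U_M$ and $y$ outside $U_{2M}$ (so that only the regime $|x-y|>R$ is ever used), and consequently no quantity involving $|u_n|_2$ appears in its estimate; you instead use the global bound $K_2(x,y)\ge C_K^{-1}\log(1+|x-y|)-c$, which produces the error term $c\,|u_n|_2^2|v_n|_2^2$ and obliges you to assume that $(u_n)$ is bounded in $L^2(\Omega)$. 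That assumption is not among the hypotheses of the lemma, so, as you yourself concede, what you actually prove is a weaker statement; this is the genuine gap relative to the statement, and it should be flagged as such rather than treated as available. It is harmless for the paper's purposes, since in Corollary \ref{cor:weakconvX} and Lemma \ref{lemma:PS} the lemma is only applied to sequences $(u_n)$ that are bounded in $H^1(\Omega)$.

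In fairness, the paper's own first inequality, $B_2(u_n^2,v_n^2)\ge\int_{\Omega\setminus U_{2M}}\int_A K_2\,u_n^2v_n^2\,dx\,dy$, silently discards the complementary part of the double integral, which contains near-diagonal pairs where $K_2$ is not known to be nonnegative; a fully rigorous version of that step needs either a sign property of $K_2$ on $\{|x-y|<R\}$ (not established in the paper) or precisely the error term you introduce, and with it some control of $|u_n|_2$. So your instinct that the sign-indefinite near-diagonal part must be absorbed somewhere is sound; the difference is that you make the required extra hypothesis explicit, while the paper's write-up leaves the corresponding step unjustified. To prove the lemma exactly as stated you would have to either establish that sign information for $K_2$ near the diagonal or add the $L^2$-bound on $(u_n)$ to the hypotheses.
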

\begin{proof}
	Since $u \neq 0$, there is $M>R>0$ such that $u|_{U_{M}} \neq 0$ \footnote{Here $U_M = U_M(0):=\{x \in \Omega : |x| < M \}$.}. Then, by Egorov's theorem, we may find $A \subset U_M$ such that $|u_n(x)| > \delta > 0$ for all $x \in A$. We also observe that if $x \in U_M$ and $y \in \Omega \setminus U_{2M}$ then
	$$ 1 + |x-y| \geq 1 + |y| -|x| \geq 1 + |y| - M \geq \frac{|y|}{2}+1 > \sqrt{1+|y|}.$$
	Further, since $R < M$ we have $|x-y|>R$ for $ x\in A$ and $y \in \Omega \setminus U_{2M}$. In this way, using the inequality above and \eqref{eq:Gasymptotic}, we have
	\begin{align*}
		B_2(u_n^2,v_n^2) &\geq \int_{\Omega \setminus U_{2M}} \int_{A} K_2(x,y)u_n^2(x)v_n^2(y)dxdy \\
		&= \int_{\Omega \setminus U_{2M}} \int_{A} \frac{K_2(x,y)}{\log(1+|x-y|)} \log(1+|x-y|)1_{|x-y|>R} u_n^2(x) v_n^2(y) dx dy, \\
		&\geq \frac{|A|\delta^2}{2C_K} \int_{\Omega \setminus U_{2M}}\log(1 + |y|)v_n^2(y)dy = \frac{|A| \delta^2}{2C_K} (|v_n|_*^2 - \log(1+2M)|v_n|_2^2).
	\end{align*}
	To conclude, we observe that the sequences $B_2(u_n^2,v_n^2)$ and $|v_n|_2^2$ are bounded. Thereby $(u_n)_n$ is bounded in $X$ as well. The second claim follows in a similar way if we assume that both $B_2(u_n^2,v_n^2)$ and $|v_n|_2$ converge to 0.
\end{proof}

Now, we introduce the functional $\Phi:X \to \R$
$$ \Phi(u) := \frac{\|u\|_a^2}{2} + \frac{V_0(u)}{4}, $$
which is well-defined in $X$. The following properties of the functionals defined above can be checked in an analogous way as in \cite{cw}
\begin{lemma}\label{lemma:propertiesI}
\begin{itemize}
	\item[(i)] The space $X$ is compactly embedded in $L^s(\Omega)$ for all $s \in [2, 6)$.
	
	\item[(ii)] The functionals $V_0$, $V_1$, $V_2$, and $\Phi$ are of class $C^2$ on $X$. Moreover, $V_i(u, v) = 4B_i(u^2, uv)$ for $u, v \in X$ and $i = 0, 1, 2$.
	
	\item[(iii)] $V_1$ is of class $C^1$ on $L^4(\Omega)$.
	
	\item[(iv)] $V_2$ is weakly lower semicontinuous on $H^1(\Omega)$, as a consequence $V_0$ is also weakly lower semicontinuous on $H^1(\Omega)$.
	
	\item[(v)] $\Phi$ is weakly lower semicontinuous on $X$.
	
	\item[(vi)] $\Phi$ is lower semicontinuous on $H^1(\Omega)$.
\end{itemize}
\end{lemma}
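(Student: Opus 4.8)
\emph{Plan of proof.} Items (i)--(vi) are the standard functional-analytic groundwork for the variational scheme and, as the paper indicates, follow the pattern of \cite{cw}. I would establish them in the order (i), then (ii)--(iii), then (iv), and finally deduce (v)--(vi); the only step that is not essentially formal is (iv). For (i): let $(u_n)$ be bounded in $X$, hence bounded in $H^1_p(\Omega)$, so after passing to a subsequence $u_n \rightharpoonup u$ weakly in $H^1(\Omega)$ and in $X$. On each bounded cylinder $\Omega_M := \Omega \cap \{|x|<M\}$, a bounded Lipschitz domain, Rellich--Kondrachov gives $u_n \to u$ in $L^2(\Omega_M)$, while the weight controls the tail,
$$ \int_{\Omega\setminus\Omega_M}|u_n-u|^2\,dx \;\le\; \frac{1}{\log(1+M)}\int_\Omega \log(1+|x|)\,|u_n-u|^2\,dx \;\le\; \frac{C}{\log(1+M)}, $$
uniformly in $n$; hence $u_n \to u$ in $L^2(\Omega)$. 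Since $\Omega$ is a slab, i.e.\ a uniformly Lipschitz domain in $\R^3$, the Sobolev embedding $H^1(\Omega)\hookrightarrow L^6(\Omega)$ holds, so $(u_n)$ is bounded in $L^6(\Omega)$; interpolating $L^s$ between $L^2$ and $L^6$ yields $u_n \to u$ in $L^s(\Omega)$ for every $s\in[2,6)$.

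\emph{Differentiability, (ii)--(iii).} For $i\in\{0,1,2\}$ introduce the symmetric $4$-linear form
$$ T_i(f_1,f_2,f_3,f_4) := \int_\Omega\int_\Omega K_i(x,y)\,f_1(x)f_2(x)f_3(y)f_4(y)\,dx\,dy, $$
so that $V_i(u)=T_i(u,u,u,u)$. The estimates \eqref{eq:B1ineq}, \eqref{eq:B2ineq}, \eqref{eq:B2ineq2} show directly that each $T_i$ is a bounded $4$-linear form on $X^4$ (for $T_1$ via the continuous embedding $X\hookrightarrow L^{12/5}(\Omega)$, for $T_2$ via the $|\cdot|_*$-norm). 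Consequently $V_i$ is a bounded homogeneous polynomial of degree $4$ on $X$, hence of class $C^\infty$, and polarization gives $V_i'(u)[v]=4T_i(v,u,u,u)=4B_i(u^2,uv)$, together with the analogous formula for $V_i''$. Because $\|\cdot\|_a^2$ is a bounded, thus $C^\infty$, quadratic form on $H^1_p(\Omega)$ --- equivalent to the usual one since $\einf_\Omega a>0$ --- the functional $\Phi=\tfrac12\|\cdot\|_a^2+\tfrac14V_0$ is $C^2$ on $X$. Item (iii) is the same argument confined to a single Lebesgue space: the Hardy--Littlewood--Sobolev inequality \eqref{eq:B1ineq} bounds $B_1$, hence $V_1$, on the Lebesgue norm in question, and $u\mapsto u^2$ is a smooth Nemytskii map into the corresponding dual exponent, so $V_1$ is $C^1$ (indeed $C^\infty$) there, with the same derivative formula.

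\emph{Weak lower semicontinuity, (iv).} This is the substantive step, since $K_2$ is sign-changing and unbounded, so neither convexity nor a direct Fatou argument applies to $V_2$ as such. I would split $K_2$ along the diagonal using the radius $R$ of \eqref{eq:Gasymptotic}: set $K_2^{\mathrm{far}}:=K_2\,1_{\{|x-y|\ge R\}}\ge0$ and $K_2^{\mathrm{near}}:=K_2\,1_{\{|x-y|<R\}}$, a bounded kernel compactly supported in $x-y$, and correspondingly $V_2=V_2^{\mathrm{far}}+V_2^{\mathrm{near}}$. Given $u_n\rightharpoonup u$ in $H^1(\Omega)$ with $\sup_n\|u_n\|_X<\infty$ (the situation occurring in all applications; then along a subsequence also $u_n\rightharpoonup u$ in $X$), a diagonal argument over the $\Omega_M$ combined with Rellich yields $u_n\to u$ a.e.\ on $\Omega$, so the two-variable Fatou lemma gives $\liminf_nV_2^{\mathrm{far}}(u_n)\ge V_2^{\mathrm{far}}(u)$ from $K_2^{\mathrm{far}}\ge0$. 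For the near term, Young's inequality with the kernel $1_{\{|z|<R\}}\in L^{3/2}(\R^3)$ shows that $V_2^{\mathrm{near}}$ depends continuously on $u^2$ in the norm of $L^{3/2}(\Omega)$; the uniform tail bound $\int_{|x|>M}u_n^2\le(\log(1+M))^{-1}|u_n|_*^2$ together with boundedness in $L^6$ gives, by interpolation, uniform smallness of $\|u_n\|_{L^3(|x|>M)}$, which with $u_n\to u$ in $L^3_{\mathrm{loc}}(\Omega)$ from Rellich upgrades to $u_n^2\to u^2$ strongly in $L^{3/2}(\Omega)$; hence $V_2^{\mathrm{near}}(u_n)\to V_2^{\mathrm{near}}(u)$. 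Adding the two parts proves that $V_2$ is weakly lower semicontinuous; that $V_0=V_1+V_2$ is also weakly lower semicontinuous follows once $V_1$ is seen to be weakly continuous on $X$, as in (v).

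\emph{The corollaries (v)--(vi), and the main obstacle.} By (i) the embedding $X\hookrightarrow L^{12/5}(\Omega)$ is compact and, by \eqref{eq:B1ineq}, $V_1$ is continuous on $L^{12/5}(\Omega)$, so $V_1$ is weakly sequentially continuous on $X$; since $\|\cdot\|_a^2$ is convex and continuous, hence weakly lower semicontinuous, adding these to (iv) gives (v). For (vi) extend $\Phi$ by $+\infty$ on $H^1_p(\Omega)\setminus X$; if $u_n\to u$ strongly in $H^1(\Omega)$, then either $\liminf\Phi(u_n)=+\infty$ and there is nothing to prove, or along a subsequence $\Phi(u_n)$ stays bounded --- and then, treating the trivial case $u=0$ separately and using for $u\ne0$ the coercivity $V_2(u_n)\ge c\,|u_n|_*^2-C(|u_n|_2^2+|u_n|_2^4)$ obtained from \eqref{eq:Gasymptotic} exactly as in Lemma \ref{lemma:B2lemma}, one finds that $\|u_n\|_X$ is bounded, so $u\in X$ by Fatou applied to $|\cdot|_*$, and the localization argument of (iv), now fed with strong $H^1$ (hence $L^{12/5}$) convergence, yields $\liminf\Phi(u_n)\ge\Phi(u)$. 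I expect (iv) to be the only genuine difficulty: because $K_2$ changes sign and is unbounded one must localize --- Fatou for the non-negative far part and \emph{tightness} for the bounded near-diagonal part --- and the tightness is supplied precisely by the weighted seminorm $|\cdot|_*$; translating-bump sequences $u_n=\varphi(\,\cdot-\tau_n)$ with $\tau_n\to\infty$ in the first two coordinates show that $V_2$ is not weakly continuous (nor lower semicontinuous) on $H^1(\Omega)$ without such control, so the uniform $X$-bound is essential, not cosmetic.
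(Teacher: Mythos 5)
Your overall scheme is exactly the one the paper points to: the paper offers no proof of this lemma beyond the remark that the items ``can be checked in an analogous way as in \cite{cw}'', and your treatment of (i), (ii), (v), (vi) is a correct implementation of that scheme (weighted tail bound plus Rellich and interpolation for (i); bounded symmetric $4$-linear forms and polarization, using \eqref{eq:B1ineq}--\eqref{eq:B2ineq2}, for (ii); compact embedding $X\hookrightarrow L^{12/5}(\Omega)$ plus convexity of $\|\cdot\|_a^2$ for (v); and the $u=0$ / $u\neq0$ dichotomy with the Lemma \ref{lemma:B2lemma}-type coercivity for (vi)).

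Where you deviate is in (iii) and (iv), and you should say so explicitly rather than prove silently modified statements. For (iii), the estimate \eqref{eq:B1ineq} controls $V_1(u)$ by $|u|_{12/5}^4$, so your argument yields $C^1$-smoothness of $V_1$ on $L^{12/5}(\Omega)$ (the exact analogue of the $L^{8/3}(\R^2)$ statement in \cite{cw}), not on $L^4(\Omega)$ as asserted; on the unbounded slab $L^4$ does not control $L^{12/5}$, and in fact $V_1$ fails to be finite on all of $L^4(\Omega)$ (take $u^2(x)=(1+|x'|)^{-3/2}$, for which $u\in L^4(\Omega)$ but the Coulomb double integral diverges), so the phrase ``the Lebesgue norm in question'' hides a real discrepancy with the stated exponent. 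For (iv), your Fatou-plus-tightness argument proves lower semicontinuity of $V_2$ only along weakly convergent sequences that are in addition bounded in $X$, and weak continuity of $V_1$ only on $X$; this is weaker than the stated ``on $H^1(\Omega)$'' (though it is what Sections 5--6 actually use). Moreover, your closing assertion that the $H^1$-statement genuinely fails for $V_2$ is not substantiated: for translating bumps $u_n=\varphi(\cdot-(\tau_n',0))$ one has $V_2(u_n)=V_2(\varphi)$, so failure requires a $\varphi$ with $V_2(\varphi)<0$, i.e. knowledge that $K_2$ is sufficiently negative near the diagonal, which neither you nor the paper establishes; by contrast, for the second clause of (iv) the failure on $H^1(\Omega)$ is genuine, since the paper itself produces $\varphi$ with $V_0(\varphi)<0$ in Lemma \ref{lemma:Nehariproperties}(ii) and $V_0$ is invariant under horizontal translations. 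If you want the lemma essentially verbatim and in the spirit of \cite{cw}, the cleaner route is to adjust the splitting once and for all: with $c:=\sup_{|x-y|<R}|K_2(x,y)|$, replace $K_2$ by $K_2\,1_{\{|x-y|\ge R\}}+c\,1_{\{|x-y|<R\}}\ge 0$ and absorb the bounded, compactly supported (in $x-y$) remainder into the $B_1$-part; then the logarithmic part has a nonnegative kernel and plain Fatou gives its weak lower semicontinuity on $H^1(\Omega)$, while the modified $B_1$-part remains $C^1$ with respect to the $L^{12/5}\cap L^2$ norms, and all later uses of the lemma go through unchanged.
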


Now, we say that $u \in X$ is a \textit{weak solution} to the equation
\begin{equation}\label{eq:choquard}
	- \Delta u + a(x) u + (\cK[u^2])u = 0 \quad \text{in } \Omega
\end{equation}
if the following identity holds
\begin{equation}
	\int_\Omega a(x)u(x) \varphi(x) + \nabla u(x) \cdot \nabla \varphi(x) dx + \int_{\Omega} \int_{\Omega} K(x,y)u^2(y)u(y)\varphi(x) dx dy = 0,
\end{equation}
for all $\varphi \in X$. Then, weak solutions to \eqref{eq:choquard} correspond to critical points of $\Phi$.

Moreover, can relate the weak solutions to \eqref{eq:choquard} and the solutions to the Schrödinger-Poisson system
\begin{align}\label{eq:SPsystem}
	\begin{cases}
		- \Delta u + a(x) u + u w = 0 &\text{in } \R^3,\\
		\Delta w = u^2 &\text{in } \R^3.
	\end{cases}
\end{align}
as follows
\begin{theorem}\label{thm:solsystem}
	Let the periodic extension of $a$ satisfy $\overline{a} \in C^{0,\alpha}_{loc}(\R^3)$. Further, let $u \in X$ be a weak solution to \eqref{eq:choquard} and define $w:= \cK [ u^2]$. Then the periodic extension $(\overline{u},\overline{w})$ is a periodic solution to the system \eqref{eq:SPsystem}, with $\overline{u} \in C^{2,\alpha}_{loc}(\R^3)$ and $\overline{w} \in C^{4,\alpha}_{loc}(\R^3)$. Moreover, $w$ satisfies
	$$ w(x) \sim \log(1+|x|) \quad \text{as } |x| \to \infty, x \in \Omega. $$
\end{theorem}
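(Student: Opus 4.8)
The plan is to: (i) identify the periodic extension $\overline{u}$ with a weak solution of an elliptic equation on all of $\R^3$, (ii) bootstrap its regularity up to $C^{2,\alpha}_{loc}$, (iii) deduce $\overline{w}\in C^{4,\alpha}_{loc}$ and conclude that $(\overline u,\overline w)$ solves \eqref{eq:SPsystem} classically, and (iv) read off the asymptotics of $w$ from Section~\ref{sec:poisson}.

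For (i): by Theorem~\ref{thm:wsolution} the function $w=\cK[u^2]$ lies in $C^{0,\beta}_{loc}(\Omega)$ for every $\beta\in(0,1)$, its periodic extension $\overline w$ lies in $C^{0,\beta}_{loc}(\R^3)$, and (by the remark following \eqref{eq:dist_solution}, since $w\in W^{3,3/2}_{loc}(\Omega)$) $w$ satisfies $w(\cdot,-\ell)=w(\cdot,\ell)$ and $\partial_3w(\cdot,-\ell)=\partial_3w(\cdot,\ell)$ in the trace sense; hence $\overline w$ is a distributional solution of $\Delta\overline w=\overline u^2$ on $\R^3$. On the other hand, since $u\in H^1_p(\Omega)$ and the weak identity defining \eqref{eq:choquard} holds against all $\varphi\in X$, in particular against all smooth periodic functions, the condition $u(\cdot,-\ell)=u(\cdot,\ell)$ and the natural condition $\partial_3u(\cdot,-\ell)=\partial_3u(\cdot,\ell)$ that the weak formulation encodes together make $\overline u\in H^1_{loc}(\R^3)$ a weak solution of $-\Delta\overline u+(\overline a+\overline w)\overline u=0$ on $\R^3$, with $\overline a+\overline w\in C^{0,\alpha}_{loc}(\R^3)$ by the hypothesis on $\overline a$ and by Theorem~\ref{thm:wsolution}. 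One may alternatively avoid the gluing hyperplanes and argue with the translate $T_\tau u$, $\tau=(0,-\ell)$, exactly as in the proof of Theorem~\ref{thm:wsolution}.

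For (ii) and (iii): I would run the standard bootstrap on $\R^3$ using only interior estimates. Since $\overline a+\overline w\in L^\infty_{loc}(\R^3)$ and $\overline u\in L^6_{loc}(\R^3)$ by the Sobolev embedding, the right-hand side of $-\Delta\overline u=-(\overline a+\overline w)\overline u$ lies in $L^6_{loc}$; the Calder\'on--Zygmund estimates give $\overline u\in W^{2,6}_{loc}(\R^3)\hookrightarrow C^{1,1/2}_{loc}(\R^3)$, in particular $\overline u\in L^\infty_{loc}$. Then $(\overline a+\overline w)\overline u\in L^p_{loc}$ for every $p<\infty$, so $\overline u\in W^{2,p}_{loc}$ for all $p$ and hence $\overline u\in C^{1,\beta}_{loc}(\R^3)$ for every $\beta\in(0,1)$; now $(\overline a+\overline w)\overline u$ is a product of locally Hölder functions whose least exponent is that of $\overline a$, so it belongs to $C^{0,\alpha}_{loc}(\R^3)$, and the Schauder estimates upgrade $\overline u$ to $C^{2,\alpha}_{loc}(\R^3)$. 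Consequently $\overline u^2\in C^{2,\alpha}_{loc}(\R^3)$, and the distributional identity $\Delta\overline w=\overline u^2$ together with the Schauder estimates yields $\overline w\in C^{4,\alpha}_{loc}(\R^3)$. Both extensions are then classical and periodic, and $(\overline u,\overline w)$ solves \eqref{eq:SPsystem} pointwise. Step (iv) is immediate: $w(x)\sim\log(1+|x|)$ as $|x|\to\infty$ is precisely the conclusion of Lemma~\ref{lemma:convolution} (equivalently of Theorem~\ref{thm:wsolution}) applied to the nontrivial $u$.

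The only step requiring genuine care is (i), namely checking that the natural (Neumann-type) boundary condition contained in the weak formulation tested against periodic functions is exactly what glues the pieces of $\overline u$ across the hyperplanes $x_3\in\ell+2\ell\mathbb Z$ into a single $H^1_{loc}$ weak solution on $\R^3$; once this is established, steps (ii)--(iv) are routine applications of interior $L^p$ and Schauder theory, and the global growth of $w$ plays no role since every regularity statement involved is local.
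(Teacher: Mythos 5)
Your proposal follows essentially the same route as the paper: use Theorem~\ref{thm:wsolution} and Lemma~\ref{lemma:convolution} for $\overline w\in C^{0,\alpha}_{loc}(\R^3)$ and the logarithmic asymptotics, bootstrap $-\Delta u+(a+w)u=0$ with $a+w\in C^{0,\alpha}_{loc}$ via $L^p$ and Schauder estimates to get $u\in C^{2,\alpha}_{loc}$ and then $w\in C^{4,\alpha}_{loc}$ from $\Delta w=u^2$, and transfer everything to $\R^3$ by the periodicity/translation argument of Theorem~\ref{thm:wsolution}. Your version merely spells out the interior bootstrap in more detail and correctly flags the gluing across $x_3\in\ell+2\ell\mathbb{Z}$ (handled in the paper by the translate $T_\tau u$, $\tau=(0,-\ell)$) as the only delicate point, so it is correct.
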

\begin{proof}
	The asymptotic behavior of $w$ at infinity, as well as the $w \in W^{1,2}_{loc}(\Omega)$ was already derived in Lemma \ref{lemma:convolution}. We proceed with the properties of the pair $(u,w)$. Indeed, we recall from Theorem \eqref{thm:wsolution}, that $w$ solves the equation $\Delta w = u^2$ in $W_{loc}^{1,2}(\Omega)$ and $\overline{w} \in C^{0,\alpha}_{loc}(\R^3)$. Going back to the first equation in \eqref{eq:SPsystem} we have
	$$ - \Delta u + (a(x) + w(x))u = 0 \quad \text{in } \Omega, $$
	where $a + w \in C^{0,\alpha}_{loc}(\Omega)$. Now, it follows from elliptic regularity theory that $u \in C^{2,\alpha}_{loc}(\Omega)$. Then, we return back to the linear equation $\Delta w = u^2$, where again from elliptic regularity theory we conclude that $w \in C^{4,\alpha}_{loc}(\Omega)$. Now, the fact that $u \in H^1_{p}(\Omega)$, allows us to extend the regularity to $\R^3$, by considering the periodic extensions of each function to $\R^3$ as we did in the proof of Theorem \ref{thm:wsolution}.
\end{proof}

We have the following bounds

\begin{lemma}\label{lemma:Imountaingeometry}
	There is $\beta > 0$ such that, for all $0 < r \leq \beta$ the inequalities below hold
\begin{align*}
	\inf_{\|u\|_a = r} \Phi(u) &> 0, \\
	\inf_{\|u\|_a = r} \Phi'(u)u &> 0.
\end{align*}
\end{lemma}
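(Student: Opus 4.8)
The plan is to estimate $\Phi(u)$ and $\Phi'(u)u$ from below in terms of $\|u\|_a$ by controlling the quartic potential term $V_0(u) = V_1(u) + V_2(u)$ near the origin. For the first inequality I would write
$$\Phi(u) = \frac{\|u\|_a^2}{2} + \frac{V_1(u)}{4} + \frac{V_2(u)}{4},$$
and bound the two potential contributions separately. For $V_2$, the lower bound \eqref{eq:B2_lower_bound} gives $V_2(u) = B_2(u^2,u^2) \geq -C|u|_2^4$, and since $X \hookrightarrow L^2(\Omega)$ continuously (with norm controlled by $\|\cdot\|_a$), we have $|u|_2^4 \leq C'\|u\|_a^4$. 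For $V_1$, the estimate \eqref{eq:estimatesGi} gives $|V_1(u)| \leq C|u|_{12/5}^4$, and again by the Sobolev embedding $H^1_p(\Omega) \hookrightarrow L^{12/5}(\Omega)$ (valid since $12/5 \in [2,6]$), this is $\leq C''\|u\|_a^4$. Combining,
$$\Phi(u) \geq \frac{\|u\|_a^2}{2} - C_1\|u\|_a^4 = \|u\|_a^2\left(\frac{1}{2} - C_1\|u\|_a^2\right),$$
which is strictly positive as soon as $\|u\|_a^2 < \frac{1}{2C_1}$. So choosing $\beta$ with $\beta^2 < \frac{1}{2C_1}$ handles the first inequality for all $0 < r \leq \beta$.

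For the second inequality, I would use Lemma \ref{lemma:propertiesI}(ii), which gives $V_i'(u)u = V_i(u,u) = 4B_i(u^2,u^2) = 4V_i(u)$, so that
$$\Phi'(u)u = \|u\|_a^2 + V_0(u) = \|u\|_a^2 + V_1(u) + V_2(u).$$
Exactly the same bounds as above yield $V_1(u) + V_2(u) \geq -C_2\|u\|_a^4$, hence
$$\Phi'(u)u \geq \|u\|_a^2 - C_2\|u\|_a^4 = \|u\|_a^2\left(1 - C_2\|u\|_a^2\right) > 0$$
whenever $\|u\|_a^2 < \frac{1}{C_2}$. Taking $\beta = \min\{(2C_1)^{-1/2}, C_2^{-1/2}\}$ (or any smaller positive number) makes both inequalities hold simultaneously on the sphere $\|u\|_a = r$ for every $0 < r \leq \beta$, since on such a sphere the bracketed factors are bounded below by a positive constant.

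The argument is essentially routine once the functional-analytic inputs are in place; there is no serious obstacle. The one point requiring a little care is the sign and homogeneity bookkeeping: $V_2(u)$ can be negative, so one cannot simply discard it, and one must invoke \eqref{eq:B2_lower_bound} rather than the two-sided estimate \eqref{eq:estimatesGi} (which only bounds $|V_2(u)|$ by the mixed quantity $|u|_*^2|u|_2^2 + |u|_2^2|u|_*^2 = 2|u|_*^2|u|_2^2$, a term not obviously controlled by $\|u\|_a$ alone since $|u|_*$ involves the weighted norm). Using the clean lower bound $V_2(u) \geq -C|u|_2^4$ sidesteps this: the $|u|_*$-part of the norm only ever helps, never hurts, in these two inequalities. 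All embeddings used ($X$ or $H^1_p(\Omega)$ into $L^2$ and $L^{12/5}$) are standard and their constants depend only on $\Omega$ and the ellipticity constant $\einf_\Omega a$, so $\beta$ is well-defined.
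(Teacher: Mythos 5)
Your proposal is correct and follows essentially the same route as the paper: split $V_0 = V_1 + V_2$, use \eqref{eq:estimatesGi} for $V_1$, the one-sided lower bound \eqref{eq:B2_lower_bound} for $V_2$, and the Sobolev embeddings to arrive at $\Phi(u) \geq \frac{\|u\|_a^2}{2} - C\|u\|_a^4$, with the same argument giving $\Phi'(u)u \geq \|u\|_a^2 - C\|u\|_a^4$. Your remark about why one must use \eqref{eq:B2_lower_bound} rather than the two-sided bound involving $|u|_*$ is exactly the right point of care, and matches the paper's reasoning.
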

\begin{proof}
Using the estimates \eqref{eq:estimatesGi}, \eqref{eq:B2_lower_bound} and the Sobolev embedding we derive
\begin{align*}
	\Phi(u) &= \frac{\|u\|_a^2}{2} + \frac{V_0(u)}{4} \geq \frac{\|u\|_a^2}{2} + \frac{V_1(u)}{4} - C|u|_2^4 \geq \frac{\|u\|_a^2}{2} - C'|u|_{12/5}^4 - C|u|_2^4\\
	&\geq C'' (\frac{\|u\|_a^2}{2} - \frac{\|u\|_a^4}{4}),
\end{align*}
for some constants $C, C',C'' >0$. The first inequality then follows taking $\beta > 0$ small enough. The second inequality can be obtained in a similar way.
\end{proof}

We define the Nehari manifold
$$ \mathcal{N} := \{u \in X \setminus \{0\} : \Phi'(u)u = 0\} = \{ u \in X \setminus \{0\} : \|u\|_a^2 = -V_0(u) \} $$
which can be equivalently defined through the \textit{fiber maps} $f_u : \R \to \R$ given by 
$$f_u(t) := \Phi(tu) = \frac{\|u\|_a^2}{2}t^2 + \frac{V_0(u) }{4}t^4 \qquad u \in X.$$
In this way, it easy to see that 
$$ \mathcal{N} = \{ u \in X : f_u'(1) = 0\}. $$
Now, we enlist some properties of the Nehari manifold
\begin{lemma}\label{lemma:Nehariproperties}
	\begin{itemize}
		\item[(i)] The set $\mathcal{N}$ is a $C^2$-manifold and it is a natural constraint for $\Phi$, i.e. critical points of the restriction $\Phi|_{\mathcal{N}}$ are critical points of $\Phi:X \to \R$.
		\item[(ii)] $\mathcal{N}$ is nonempty, closed and bounded away from $0$. Moreover, $ \inf_{\mathcal{N}} \Phi > 0$.
		\item[(iii)] $u \in \mathcal{N}$ if and only if $t=1$ is global maximum of $f_u$.
		\item [(iv)] Let $V_0(u) < 0$, then $t u \in \mathcal{N}$ if and only if $t^2 = t_u^2 := -\|u\|_a^2/V_0(u)$.
	\end{itemize}
\end{lemma}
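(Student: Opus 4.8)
The plan is to reduce the whole statement to the $4$-homogeneity of $V_0$ together with the structure of the fiber maps. I would first record the elementary identities
$$ f_u(t)=\tfrac{t^2}{2}\|u\|_a^2+\tfrac{t^4}{4}V_0(u),\qquad f_u'(t)=t\bigl(\|u\|_a^2+t^2V_0(u)\bigr), $$
and, writing $G(u):=\Phi'(u)u=\|u\|_a^2+V_0(u)$ so that $\mathcal{N}=G^{-1}(0)\setminus\{0\}$, deduce from Euler's identity $V_0'(u)u=4V_0(u)$ that
$$ G'(u)u=2\|u\|_a^2+4V_0(u)=-2\|u\|_a^2\qquad\text{for every }u\in\mathcal{N}. $$
I would also note that $V_0$ is the restriction to the diagonal of a bounded symmetric $4$-linear form on $X$ (the $4$-linear refinements of \eqref{eq:B1ineq}, \eqref{eq:B2ineq} and \eqref{eq:B2ineq2}), hence a bounded quartic polynomial and in particular of class $C^\infty$; thus $\Phi$ and $G$ are smooth on $X$, which is what is needed for the $C^2$ manifold structure below (the bare $C^2$ regularity of $\Phi$ from Lemma \ref{lemma:propertiesI} would only give a $C^1$ manifold).

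Parts (i), (iii) and (iv) are then short. For (iv): for $t\neq 0$, $tu\in\mathcal{N}$ iff $t^2\|u\|_a^2=-t^4V_0(u)$ iff $t^2=-\|u\|_a^2/V_0(u)$, which is admissible exactly when $V_0(u)<0$. For (iii): from the factored form of $f_u'$, if $V_0(u)\geq 0$ then $f_u$ is strictly increasing on $[0,\infty)$ and has no global maximiser, whereas if $V_0(u)<0$ its unique positive critical point $t_u=\sqrt{-\|u\|_a^2/V_0(u)}$ is the global maximiser of the even function $f_u$ on $\R$; hence $t=1$ is a global maximum iff $t_u=1$ iff $V_0(u)=-\|u\|_a^2$ iff $u\in\mathcal{N}$. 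For (i): since $G\in C^\infty$ and $G'(u)\neq 0$ on $\mathcal{N}$ by the displayed identity, $0$ is a regular value of $G$ on $X\setminus\{0\}$, so $\mathcal{N}$ is a $C^\infty$ (in particular $C^2$) submanifold of codimension one; and if $u$ is a critical point of $\Phi|_{\mathcal{N}}$ then $\Phi'(u)$ annihilates $T_u\mathcal{N}=\ker G'(u)$, hence $\Phi'(u)=\lambda\,G'(u)$ in $X^*$ for some $\lambda\in\R$, and evaluating at $u$ gives $0=\Phi'(u)u=-2\lambda\|u\|_a^2$, so $\lambda=0$ and $\Phi'(u)=0$.

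The one point that really needs an argument is the nonemptiness in (ii): because $K$ changes sign and $B_2$ is positive and unbounded on spread-out functions, $V_0$ is not sign-definite, so I must exhibit some $u$ with $V_0(u)<0$. I would do this by concentration: fix $\varphi\in C^\infty_c(\Omega)$, $\varphi\neq 0$, with support in a fixed ball contained in the interior of $\Omega$, and set $u_\lambda(x):=\varphi(x/\lambda)$ for small $\lambda>0$ (so $u_\lambda\in X$). Scaling gives $\|u_\lambda\|_a^2=O(\lambda)$, $V_1(u_\lambda)=B_1(u_\lambda^2,u_\lambda^2)=-c\lambda^5$ with $c>0$, and $|V_2(u_\lambda)|\leq C\,|u_\lambda|_*^2|u_\lambda|_2^2=O(\lambda^6)$ by \eqref{eq:estimatesGi} (the weight $\log(1+|x|)$ is bounded on the support of $u_\lambda$); hence $V_0(u_\lambda)=-c\lambda^5+O(\lambda^6)<0$ for $\lambda$ small, and by (iv) the rescaled function $t_{u_\lambda}u_\lambda$ lies in $\mathcal{N}$. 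The remaining assertions of (ii) are routine: if some $u\in\mathcal{N}$ had $\|u\|_a\leq\beta$, with $\beta$ as in Lemma \ref{lemma:Imountaingeometry}, then $\Phi'(u)u>0$, contradicting $\Phi'(u)u=0$, so $\|u\|_a>\beta$ on $\mathcal{N}$, which also exhibits $\mathcal{N}=G^{-1}(0)\cap\{\,\|\cdot\|_a\geq\beta\,\}$ as closed; and for $u\in\mathcal{N}$ one has $\Phi(u)=f_u(1)=\tfrac12\|u\|_a^2+\tfrac14V_0(u)=\tfrac14\|u\|_a^2\geq\tfrac14\beta^2>0$, so $\inf_{\mathcal{N}}\Phi>0$. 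Thus the main obstacle is the construction of an element with $V_0<0$; everything else is bookkeeping with the $4$-homogeneity of $V_0$, and the concentration works precisely because the Newtonian part $B_1$ is strictly negative and scales like $\lambda^5$ while the logarithmic part $B_2$ scales like $\lambda^6$.
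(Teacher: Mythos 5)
Your proof is correct, and its skeleton matches the paper's: the fiber-map computations for (iii)--(iv), Lemma \ref{lemma:Imountaingeometry} for boundedness away from $0$, closedness and the positive lower bound on $\Phi|_{\mathcal{N}}$, and the regular-value/Lagrange-multiplier argument for (i), which the paper merely labels as standard. The one genuine divergence is the nonemptiness step in (ii): the paper chooses $\delta>0$ so small that $K=K_1+K_2$ is negative for $|x-y|<\delta$ (the singular Newtonian part dominates the smooth part $K_2$ near the diagonal, by Theorem \ref{thm:asymtoticsGreen}), and then any nonzero test function supported in a sufficiently small ball has $V_0<0$ outright, with no limiting argument; you instead run the scaling family $u_\lambda(x)=\varphi(x/\lambda)$ and compare orders, $V_1(u_\lambda)=-c\lambda^5$ against $V_2(u_\lambda)=O(\lambda^6)$. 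Both are concentration arguments exploiting the same mechanism (the negative Newtonian interaction wins as the support shrinks); the paper's version is shorter, while yours avoids having to isolate the pointwise sign of $K$ near the diagonal and only uses the quantitative bounds \eqref{eq:estimatesGi}. A second, smaller difference is to your credit: by noting that $V_0$ is the diagonal of a bounded $4$-linear form, hence a smooth quartic, you actually justify that $\mathcal{N}$ is a $C^2$ (indeed smooth) submanifold, a point the paper leaves implicit since Lemma \ref{lemma:propertiesI} only records $C^2$ regularity of $\Phi$.
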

\begin{proof}
Part $(i)$ is a standard result. For $(ii)$, we start by proving that $\mathcal{N}$ is nonempty. We recall the asymptotic description \ref{thm:asymtoticsGreen}, so we may find $\delta >0$ small enough that $K_1(x) < 0$ for all $|x|<\delta$. Then, we take a nonzero test function $\varphi \in C^{\infty}_c(U_{\delta})$ and for this function it holds $ V_0(\varphi) = V_1(\varphi) < 0 < \| \varphi \|_a$. It follows now that the fiber map $t \mapsto \Phi(t\varphi)$ has a unique positive critical point, say at $t_0 > 0$, and then $t_0 \varphi \in \mathcal{N}$.

Now, we can see that $\mathcal{N}$ is bounded away from $0$, using Lemma \eqref{lemma:Imountaingeometry} to conclude that for all $u \in \mathcal{N}$ we have the inequality $\|u\|_a \geq \alpha$. From this fact, we conclude immediately that $\mathcal{N}$ is closed. Part $(iii)$ follows from the observation that if $u \in \mathcal{N}$, then $f_u''(1) = - 2\|u\|_a^2 \leq -2 \alpha < 0$; and the inverse statement is immediate. Finally, $(iv)$ follows from the definition of $\mathcal{N}$, since $tu \in \mathcal{N}$ if and only if $t^2\|u\|_a^2 = \|tu\|^2_a = -V_0(tu) = - t^4 V_0(u)$.
\end{proof}

Now, we introduce some notation related to the symmetrical setting. Let $O(2)$ be the orthogonal group in $\R^2$. We write its elements as $g_{A}$ where $A \in O(2)$ and consider the action of this group on $\R^3$ given by 
$$ g_{A}(x',x_3) = (Ax',x_3) \qquad x = (x',x_3) \in \R^3.$$
In this way, $O(2)$ acts isometrically on $L^p(\Omega)$ by setting
\begin{equation}\label{eq:action_O2}
	(g_A * u)(x)= u(g_A^{-1 } x) \qquad \text{for $u \in L^p(\Omega)$, $A \in O(2)$.}
\end{equation}
Then, we consider the $O(2)$-invariant subspace 
$$ X_r := \{ u \in X : g_A \ast u = u, \, \forall A \in O(2)\}. $$

Now, for the remainder of this paper we will assume that
\begin{equation}\label{eq:symmetries_a}
	a(Ax',x_3) = a(x', x_3) \quad \text{a.e. in } \Omega \text{ for all } A \in O(2).
\end{equation}

We will continue our analysis with the restriction $\Phi:X_r \to \R$, which by \eqref{eq:symmetries_a} is an $O(2)$-invariant functional. We require the following standard result to make sure that we recover critical points of the full functional $\Phi : X \to \R$, see \cite{palais_sym}

\begin{lemma}\label{lemma:symmetriccriticality}
	Let $u \in X$ be nontrivial critical point of $\Phi : {X_r} \to \R$, then $u$ is a nontrivial critical point of $\Phi:X \to \R$.
\end{lemma}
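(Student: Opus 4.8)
The statement is the principle of symmetric criticality of Palais, specialized to the group $O(2)$ acting on $X$. The plan is to verify the hypotheses of Palais's theorem and invoke it. The key structural fact is that $X$ is a Hilbert space (equipped with $\|\cdot\|_X$, which is equivalent to the natural norm inherited from $H^1_p(\Omega)$ together with the weighted term $|\cdot|_*$), and the $O(2)$-action $(g_A * u)(x) = u(g_A^{-1}x)$ is isometric on $X$. Indeed, the change of variables $x' \mapsto Ax'$ is measure-preserving, leaves $|\nabla u|^2$ invariant, leaves $a(x)u^2$ invariant by the symmetry assumption \eqref{eq:symmetries_a}, and leaves $|x| = |(x',x_3)|$ invariant so that $|u|_*$ is preserved as well. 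Hence $g_A$ maps $X$ isometrically onto $X$, and $X_r$ is precisely the fixed-point subspace $\Fix(O(2)) = \{u \in X : g_A * u = u \ \forall A \in O(2)\}$, which is a closed linear subspace.

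Next I would record that $\Phi : X \to \R$ is $O(2)$-invariant, i.e. $\Phi(g_A * u) = \Phi(u)$ for all $A \in O(2)$ and $u \in X$. The kinetic and potential parts are invariant by the computation just mentioned; for the nonlocal term one uses that $K(x,y) = K(x-y,0)$ (Corollary \ref{corollary:translation_K}) so that $K(x,y)$ depends on $x-y$ only, and since $g_A$ acts on the $x'$-coordinates by an orthogonal transformation, the double integral $B_0(u^2,u^2) = \int_\Omega\int_\Omega K(x,y)u^2(x)u^2(y)\,dx\,dy$ is unchanged under the simultaneous substitution $x \mapsto g_A x$, $y \mapsto g_A y$ — here one needs that $|g_A x - g_A y| = |A(x'-y')|^2 + |x_3-y_3|^2)^{1/2} = |x-y|$, so the kernel is genuinely invariant, not merely the leading terms. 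Since $\Phi$ is of class $C^2$ on $X$ by Lemma \ref{lemma:propertiesI}(ii), this invariance is enough.

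With these two ingredients — an isometric action of a (topological) group on the Hilbert space $X$ and a $C^1$ invariant functional — Palais's symmetric criticality principle \cite{palais_sym} applies directly: any critical point of the restriction $\Phi|_{X_r}$ is a critical point of $\Phi$ on the full space $X$. Concretely, for $u \in X_r$ with $(\Phi|_{X_r})'(u) = 0$, one decomposes the tangent space $X = X_r \oplus X_r^\perp$ orthogonally (the orthogonal complement being $O(2)$-invariant because the action is isometric), notes $\Phi'(u)$ vanishes on $X_r$ by hypothesis, and vanishes on $X_r^\perp$ because $\Phi'(u)$ is itself $O(2)$-fixed as an element of $X^* \cong X$ (by invariance of $\Phi$ and equivariance of the derivative) hence lies in $X_r$, which is orthogonal to $X_r^\perp$. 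Therefore $\Phi'(u) = 0$ in $X^*$, and nontriviality of $u$ is assumed, so $u$ is a nontrivial critical point of $\Phi : X \to \R$.

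The main obstacle — really the only nonroutine point — is confirming that the nonlocal kernel is exactly $O(2)$-invariant in both variables simultaneously, i.e. that $K(g_A x, g_A y) = K(x,y)$. This follows cleanly from the representation $K(x,y) = \tilde K(x-y)$ established in the proof of Theorem \ref{thm:asymtoticsGreen} (equation \eqref{eq:translation_K}) together with the rotational invariance in the $x'$-variables of each building block: the singular part $-\tfrac{1}{4\pi|x-y|}$ depends only on $|x-y|$, and $K_2 = \tilde K_2(x-y)$ was built from Fourier multipliers depending on $|\xi'|$ and $|\xi_3|$ alone, hence its inverse transform is radial in the planar variable. Granting this, all verifications reduce to measure-preserving changes of variables and the cited symmetry of $a$, and the conclusion is immediate.
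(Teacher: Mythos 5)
Your proposal is correct and follows the same route as the paper, which simply cites Palais's principle of symmetric criticality \cite{palais_sym} and leaves the verification as standard; you supply exactly that verification (isometry of the $O(2)$-action on $X$, invariance of $\Phi$ via $K(x,y)=\tilde K(x-y)$ with $\tilde K$ radial in the planar variable, and the Riesz/orthogonal-decomposition argument). No gaps; the kernel-invariance check you flag as the only nonroutine point is indeed the right thing to confirm and your argument for it is sound.
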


\section{Radial ground states and high energy solutions}\label{multiplicity}
In this section we describe a method to produce high energy solutions to \eqref{eq:choquard}. For the remaining of this section we set $\mathcal{N}_r := \mathcal{N} \cap X_r$ which is a $C^2$ submanifold of $X_0$. Moreover, $\mathcal{N}_r$ inherits a natural Riemannian structure from $X$ by restricting $\| \cdot \|_X$ to the tangent space $T\mathcal{N}_r$. We will denote the canonical dual norm over the cotangent space $T^*\mathcal{N}_r$ by $\| \cdot \|_{X^*}$.

Now, we recall the notion of a \textit{Palais-Smale sequence}. We say that $(u_n)_n \subset \mathcal{N}_r$ is a Palais-Smale sequence at $c \in \R$, $(PS)_c$-sequence for short, if
\begin{equation}\label{eq:ps}
	\Phi(u_n) \to c \qquad \| \Phi|_{\mathcal{N}_r}' \|_{X^*} \to 0.
\end{equation}
We say that $\Phi:\mathcal{N}_r \to \R$ has the \textit{Palais-Smale} property at $c$, $(PS)_c$-property for short, if any $(PS)_c$-sequence contains a subsequence that converges strongly in $X$.

We have the following
\begin{lemma}\label{lemma:H1boundedsequences}
	Let $(u_n)_n \subset X_r$ be bounded in $H^1(\Omega)$. Then, either $u_n \to 0$ in $L^s(\R^2)$ for all $s \in (2,6)$, or there is $u \in H^1(\Omega) \setminus \{0\}$ such that, $g_A \ast u = u$ for all $A \in O(2)$ and, up to a subsequence, we have
	\begin{align*}
		u_n \rightharpoonup u &\quad \text{weakly in } H^1(\Omega), \\
		u_n \to u &\quad \text{in } L^s_{loc}(\Omega) \quad \text{for } s \in (2,6),\\
		u_n \to u &\quad \text{a.e. in } \Omega.
	\end{align*}
\end{lemma}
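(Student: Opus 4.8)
The statement is a classical dichotomy result, essentially the radial version of Lions' vanishing lemma. The plan is to apply a version of P.-L. Lions' lemma adapted to the cylindrical symmetry of $\Omega = \R^2 \times (-\ell,\ell)$: since $(u_n)_n$ is bounded in $H^1(\Omega)$, either the quantity $\sup_{z \in \R^2} \int_{B_1(z) \times (-\ell,\ell)} u_n^2 \, dx \to 0$, in which case the standard Lions argument (interpolating with the Gagliardo-Nirenberg inequality on the cylinder) gives $u_n \to 0$ in $L^s(\Omega)$ for all $s \in (2,6)$, or this supremum stays bounded below along a subsequence by some $\delta > 0$. In the latter case, choose $z_n \in \R^2$ with $\int_{B_1(z_n)\times(-\ell,\ell)} u_n^2 \geq \delta/2$.

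The key point is now to \emph{relocate the mass to the axis using the $O(2)$-symmetry}. Because each $u_n$ is radial in $x'$, the $L^2$-mass of $u_n$ on the annular region $\{R_1 < |x'| < R_2\} \times (-\ell,\ell)$ controls, via the compactness of the embedding $H^1_{\mathrm{rad}} \hookrightarrow L^2$ away from the axis (a Strauss-type argument), the pointwise size of $u_n$ there. More precisely, for radial functions one has the decay estimate $|u_n(x)| \leq C |x'|^{-1/2} \|u_n\|_{H^1(\Omega)}$ for $|x'|$ large, so concentration of a fixed amount of mass in \emph{some} ball $B_1(z_n)\times(-\ell,\ell)$ forces $|z_n|$ to stay bounded: otherwise the $L^2$-mass in that ball would tend to zero. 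Hence $|z_n| \le M$ for all $n$, and the concentration ball is contained in a fixed bounded cylinder $U := B_{M+1}(0) \times (-\ell,\ell)$. Passing to a further subsequence, $u_n \rightharpoonup u$ weakly in $H^1(\Omega)$; the embedding $H^1(U) \hookrightarrow \hookrightarrow L^2(U)$ is compact, so $u_n \to u$ in $L^2(U)$, and since $\int_{B_1(z_n)\times(-\ell,\ell)} u_n^2 \geq \delta/2$ with all these balls inside $U$, we get $\int_U u^2 \geq \delta/2 > 0$, so $u \neq 0$. Local $L^s$-convergence for $s\in(2,6)$ follows from interpolation between the $L^2_{loc}$-convergence and the $H^1$-boundedness (which bounds $L^6_{loc}$), together with the Rellich-Kondrachov theorem on bounded subdomains; a.e.\ convergence follows along a further subsequence. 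Finally, the $O(2)$-invariance of $u$ is inherited from the weak convergence: the action \eqref{eq:action_O2} is isometric, hence weakly continuous, so $g_A * u = \text{w-lim}\, g_A * u_n = \text{w-lim}\, u_n = u$ for every $A \in O(2)$.

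The main obstacle is the relocation step: one must rule out that the concentration escapes to spatial infinity \emph{within the plane} (the cylinder is only infinite in the $x'$-directions, so ordinary Lions concentration could a priori produce $z_n$ with $|z_n|\to\infty$). This is precisely where radial symmetry is essential, and the argument is the cylindrical analogue of the classical compactness of $H^1_{\mathrm{rad}}(\R^N) \hookrightarrow L^s$ for $2 < s < 2^*$; the slab being bounded in $x_3$ causes no difficulty since one integrates out the $x_3$-variable first and applies the planar radial estimate. Everything else is routine: the first alternative is Lions' lemma verbatim, and the extraction of weak and local-strong limits is standard Sobolev compactness on a fixed bounded domain.
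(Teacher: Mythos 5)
Your overall strategy (Lions dichotomy plus the observation that planar radial symmetry forbids the concentration point from escaping to infinity in the $x'$-direction) is the right one, and the weak-limit/local-compactness/invariance bookkeeping at the end is fine. However, the key quantitative step is not correct as stated: the pointwise Strauss-type bound $|u_n(x)| \leq C|x'|^{-1/2}\|u_n\|_{H^1(\Omega)}$ does \emph{not} hold for functions on the slab $\Omega=\R^2\times(-\ell,\ell)$ that are radial only in $x'$. Such a function is effectively an $H^1$ function of the two variables $(|x'|,x_3)$, and $H^1$ in two variables does not embed into $L^\infty$; for instance a (truncated) $\log\log$-type singularity along a circle $\{|x'|=R,\ x_3=0\}$, rotated in $x'$, lies in $H^1(\Omega)$, is $O(2)$-invariant, and is essentially unbounded there. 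So the sentence ``the $L^2$-mass in that ball would tend to zero'' is not justified by the estimate you invoke, and this is exactly the step on which the whole second alternative rests.

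The conclusion you want is nevertheless true and can be repaired in two standard ways. Either apply the genuine 2D Strauss lemma to the radial function $v_n(x'):=\bigl(\int_{-\ell}^{\ell}u_n^2(x',x_3)\,dx_3\bigr)^{1/2}$, which satisfies $\|v_n\|_{H^1(\R^2)}\leq \|u_n\|_{H^1(\Omega)}$ and hence $v_n^2(x')\leq C|x'|^{-1}$, giving $\int_{B_1(z_n)\times(-\ell,-\ell+2\ell)}u_n^2\leq C|z_n|^{-1}\to 0$ in the integrated (not pointwise) sense you need; or argue as the paper does: if $|z_n|\to\infty$, rotate the concentration cylinders by angles $2\pi j/m$; by $O(2)$-invariance each rotated copy carries the same mass $\geq \delta/2$, and for large $n$ they are pairwise disjoint, so $|u_n|_2^2\geq m\delta/2+o(1)$ for every $m$, contradicting the $H^1$-bound. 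The paper's rotation/disjointness argument avoids any decay estimate altogether, while your Strauss route, once corrected via the slicing function $v_n$, gives an explicit quantitative decay rate; both are acceptable, but as written your proof has a genuine gap at this point.
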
	
\begin{proof}
	Let $x' \in \R^2$ and define $U(x') := \{ z \in \Omega : |x'-z'| < 1 \}$. Then set
	$$ \mu := \liminf_{n \in \mathbb{N}}\sup_{x' \in \R^2}\int_{U(x')} u_n^2(y)dy,$$
	If $\mu = 0$, then it follows from Lions' lemma, see \cite{l}, that $u_n \to 0$ in $L^s(\Omega)$ for all $s \in (2,6)$. Otherwise, $\mu > 0$ and thus there is a sequence of $x_n' \in \R^2$ such that
	\begin{equation}\label{eq:mu}
		\lim_{n \to \infty} \int_{U(x_n')} u_n^2(y) dy \to \mu.
	\end{equation}
	Then we find $u \in H^1(\Omega)$ such that, up to a subsequence, we have
	\begin{align*}
		u_n(\cdot - x_n) \rightharpoonup u &\quad \text{weakly in } H^1(\mathbb{R}^2), \\
		u_n(\cdot - x_n) \to u &\quad \text{in } L^s_{loc}(\mathbb{R}^2) \quad \text{for } s \in (2,6),\\
		u_n(\cdot - x_n) \to u &\quad \text{a.e in } \mathbb{R}^2,
	\end{align*}
	where $x_n := (x_n',0) \in \Omega$. Moreover, from \eqref{eq:mu} and the convergence in $L^s_{loc}$, it is easy to see that $u \neq 0$. It remains to show that we can set $x_n = 0$ for all $n \in \mathbb{N}$. The result is clear if we can show that $(x_n)_n$ is bounded. If this is not the case, passing to a subsequence, we may assume that $|x_n| \to \infty$. Then we let $A_m \in O(2)$ be a rotation of angle $2 \pi /m$ in the plane $\R^2$ and consider the sequences $x_n^j := A_m^j x_n$. Observe that, 
	$$ | x_n^j  - x_n^k | \to \infty \qquad \text{for } k,j = 0,...,m-1, \, k \neq j,$$
	and thus, for $n$ large enough and $j \neq k$, we have
	$$ U(x_n^j) \cap U(x_n^k) = \emptyset.$$
	Since $u \in X_0$ it follows that
	$$ \int_{U(x_n^k)}u_n^2(y) dy = \int_{U(x_n)}u_n^2(y) dy \to \mu \qquad k = 0,\ldots,m-1,$$
	Thus
	\begin{align*}
		|u_n(\cdot - x_n)|_2^2 = |u_n|_2^2 = \int_{\Omega} u_n^2(y) dy \geq \sum_{k=1}^m \int_{B_1(x_n^k)} u_n^2(y) dy = m \mu + o(1),
	\end{align*}
	for all $m \in \mathbb{N}$. This contradicts that $u_n(\cdot - x_n)$ is bounded in $H^1(\Omega)$. Then we may set $x_n = 0$ and from the pointwise convergence we obtain $g_{A} \ast u = u$ for all $A \in O(2)$. 
\end{proof}
Combining this result with Lemma \ref{lemma:B2lemma} we obtain
\begin{corollary}\label{cor:weakconvX}
	Let $(u_n)_n \subset X_r$ be a sequence bounded in $H^1(\Omega)$ and such that $V_2(u_n)$ is bounded. Then, passing to a subsequence, we have either $u_n \to 0$ in $L^s(\Omega)$ for all $s \in (2,6)$, or there is $u \in X_r$ such that $u_n \rightharpoonup u$ weakly in $X$.
\end{corollary}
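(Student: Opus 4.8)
The plan is to chain together the two preceding lemmas, after which only an elementary Hilbert-space argument remains. First I would apply Lemma~\ref{lemma:H1boundedsequences} to the $O(2)$-invariant, $H^1$-bounded sequence $(u_n)_n$. This immediately produces the dichotomy in the statement: either $u_n \to 0$ in $L^s(\Omega)$ for all $s \in (2,6)$ --- in which case the first alternative of the corollary holds and nothing more is needed --- or, after passing to a subsequence, there is a nonzero $u \in H^1(\Omega)$ with $g_A \ast u = u$ for all $A \in O(2)$, such that $u_n \rightharpoonup u$ weakly in $H^1(\Omega)$, $u_n \to u$ in $L^s_{loc}(\Omega)$ for $s \in (2,6)$, and $u_n \to u$ a.e.\ in $\Omega$. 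From here on we work within the second alternative.

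The key remaining point is to control the seminorm $|u_n|_*$, and for this I would invoke Lemma~\ref{lemma:B2lemma} with $(u_n)_n$ playing both roles, i.e.\ taking $v_n := u_n$. Its hypotheses are met: $(u_n)_n$ converges a.e.\ to $u \in L^2(\Omega)\setminus\{0\}$, it is bounded in $L^2(\Omega)$ because it is bounded in $H^1(\Omega)$, and $\sup_n B_2(u_n^2,u_n^2) = \sup_n V_2(u_n) < \infty$ by assumption. The lemma then furnishes $n_0 \in \mathbb{N}$ and $C>0$ with $|u_n|_* \leq C$ for all $n \geq n_0$, so that $(u_n)_{n\geq n_0}$ is bounded in the Hilbert space $X$.

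Finally, a bounded sequence in the Hilbert space $X$ has a weakly convergent subsequence; since the embedding $X \hookrightarrow H^1(\Omega)$ is continuous, any such weak $X$-limit is also a weak $H^1$-limit and hence, by uniqueness of the latter, must coincide with $u$. In particular $u \in X$, and combined with $g_A \ast u = u$ for all $A \in O(2)$ this gives $u \in X_r$. Since every subsequence of $(u_n)_{n\geq n_0}$ admits a further subsequence converging weakly in $X$ to the same limit $u$, the whole sequence satisfies $u_n \rightharpoonup u$ weakly in $X$, as claimed. The argument is essentially bookkeeping; the only genuinely delicate ingredient is the applicability of Lemma~\ref{lemma:B2lemma}, which hinges on the a.e.\ convergence to a \emph{nonzero} limit produced in the first step, and it is precisely this nontriviality that relies on the $O(2)$-symmetry of $X_r$, preventing the mass of $(u_n)_n$ from sliding off to spatial infinity.
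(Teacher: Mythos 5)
Your argument is correct and is exactly the route the paper takes: Lemma~\ref{lemma:H1boundedsequences} gives the dichotomy, and in the nonvanishing case Lemma~\ref{lemma:B2lemma} with $v_n=u_n$ (using the boundedness of $V_2(u_n)$ and the a.e.\ convergence to a nonzero limit) bounds $|u_n|_*$, so the sequence is bounded in $X$ and a weak $X$-limit must coincide with $u \in X_r$ by uniqueness of the weak $H^1$-limit. The only minor imprecision is that your final "whole sequence" claim should refer to the subsequence fixed after the first step, but since the corollary only asserts convergence up to a subsequence this is immaterial.
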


To prove that $\Phi$ satisfies the $(PS)_c$-property, we will need to borrow the following result from \cite[Lemma 2.6]{cw}, whose proof can be easily adapted to our setting
\begin{lemma}\label{lemma:weakconvergence}
	Let $(u_n)_n$, $(v_n)_n$, $(w_n)_n$ be bounded sequences in $X$ such that $u_n \rightharpoonup u$ weakly in $X$. Then, for every $z \in X$, we have $B_2(v_n w_n, z(u_n - u)) \rightarrow 0$ as $n \rightarrow \infty$.
\end{lemma}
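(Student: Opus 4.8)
The plan is to reduce the claim to a splitting of the domain $\Omega \times \Omega$ into a near-diagonal region $\{|x-y| < R\}$ and a far region $\{|x-y| \geq R\}$, handling each with a different mechanism, exactly as one does for $B_2$ in the estimates \eqref{eq:B2ineq} and \eqref{eq:B2ineq2}. Recall $B_2(v_n w_n, z(u_n-u)) = \int_\Omega \int_\Omega K_2(x,y)\, v_n(x)w_n(x)\, z(y)(u_n(y)-u(y))\, dx\, dy$. On the near-diagonal part, $K_2$ is bounded (since $K_2 \in C^\infty(\overline{\Omega}^2)$ and we only see it on a bounded neighborhood of the diagonal), so that integral is controlled by $C\,|v_n|_2\,|w_n|_2\,|z|_2\,|u_n-u|_2$; since $v_n,w_n,z$ are bounded in $X \hookrightarrow L^2(\Omega)$ and $u_n \to u$ in $L^2_{loc}$, one needs the convergence $u_n \to u$ in $L^2(\Omega)$, which is not guaranteed globally. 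So instead I would use the compact embedding $X \hookrightarrow L^s(\Omega)$ for $s \in [2,6)$ from Lemma \ref{lemma:propertiesI}(i): since $u_n \rightharpoonup u$ in $X$, along a subsequence $u_n \to u$ strongly in $L^{12/5}(\Omega)$, say, and combined with the Hardy–Littlewood–Sobolev-type bound \eqref{eq:estimatesGi} applied to the $K_1$-type piece (or directly Cauchy–Schwarz on the bounded near-diagonal kernel) this forces the near-diagonal contribution to $0$. Actually the cleanest route is: the whole sequence $(u_n-u)$ converges to $0$ in $L^s(\Omega)$ for every $s\in[2,6)$ by Lemma \ref{lemma:propertiesI}(i), so the near-diagonal term, bounded by $C\,|v_n|_2|w_n|_2|z|_2|u_n-u|_2$, tends to $0$.

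For the far region $\{|x-y| \geq R\}$ I would use \eqref{eq:Gasymptotic}: there $0 < K_2(x,y) \leq C_K \log(1+|x-y|) \leq C_K[\log(1+|x|) + \log(1+|y|)]$. Splitting the logarithm across the two factors and applying Cauchy–Schwarz as in \eqref{eq:B2ineq}, the far-region integral is bounded by
\begin{align*}
C_K\Bigl(|v_n|_* |w_n|_* \,|z|_2\, |u_n-u|_2 \;+\; |v_n|_2 |w_n|_2 \,|z|_*\, |u_n - u|_*\Bigr).
\end{align*}
Here I want the factor $|u_n-u|_2 \to 0$ in the first term — true by Lemma \ref{lemma:propertiesI}(i) again — while $|v_n|_*, |w_n|_*, |z|_2$ stay bounded because all three sequences are bounded in $X$. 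The second term is the delicate one: it contains $|u_n - u|_*$, and weak convergence $u_n \rightharpoonup u$ in $X$ does \emph{not} give $|u_n - u|_* \to 0$. This is the main obstacle. To get around it I would mimic the argument of \cite[Lemma 2.6]{cw}: rather than bounding $|u_n-u|_*$ by a constant, exploit that in the second term we may instead distribute the $\log(1+|y|)$ weight as $\log(1+|y|) \leq \log(1+|x-y|) + \log(1+|x|)$ isn't what's needed — instead, keep the $z(u_n-u)$ pair together and use weak convergence directly. Concretely, fix $z \in X$; the linear functional $h \mapsto \int_\Omega\int_\Omega K_2(x,y)\mathbf{1}_{\{|x-y|\geq R\}} v_n(x)w_n(x)\, z(y) h(y)\, dx\, dy$ is, for fixed $n$, a bounded functional on $X$ whose representing element $g_n(y) := z(y)\int_\Omega K_2(x,y)\mathbf{1}_{\{|x-y|\geq R\}}v_n(x)w_n(x)\,dx$ is bounded in the dual of $X$ uniformly in $n$ (by \eqref{eq:B2ineq} with $v_n, w_n$ bounded in $X$ and $z$ fixed).

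So the endgame is: write $B_2(v_nw_n, z(u_n-u))$ = (near-diagonal term, $\to 0$ by $L^2$-compactness) + $\langle g_n, u_n - u\rangle$ where $g_n$ is bounded in $X^*$. A uniform bound on $g_n$ alone is not enough to conclude $\langle g_n, u_n-u\rangle \to 0$ from $u_n \rightharpoonup u$; one needs $g_n$ to converge (weakly-$*$ or better) to something. Here is where the pointwise a.e.\ convergence $u_n \to u$ enters, together with the positivity of $K_2$ on $\{|x-y|\geq R\}$ and Fatou/dominated-convergence arguments analogous to those proving Lemma \ref{lemma:propertiesI}(iv) and Lemma \ref{lemma:B2lemma}: one shows $g_n \to g_\infty$ weakly-$*$ in $X^*$ where $g_\infty(y) = z(y)\int K_2(x,y)\mathbf{1}_{\{|x-y|\geq R\}}v_\infty(x)w_\infty(x)\,dx$ (with $v_\infty, w_\infty$ the weak limits along a further subsequence), and then $\langle g_n, u_n - u\rangle = \langle g_n - g_\infty, u_n\rangle + \langle g_\infty, u_n - u\rangle \to 0$: the second piece vanishes since $u_n \rightharpoonup u$ and $g_\infty \in X^*$, and the first since $g_n - g_\infty \to 0$ weakly-$*$ against the bounded sequence $u_n$ — this last step needs a little care and is really where one must invoke the $\log$-weighted dominated convergence exactly as in \cite[Lemma 2.6]{cw}, truncating $y$ to $\{|y| \leq M\}$ and controlling the tail $\{|y| > M\}$ uniformly via $\sup_n |u_n|_* < \infty$. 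Since the statement explicitly says the proof of the cited lemma adapts, I would present the above reduction and then say the remaining weak-$*$ convergence of $g_n$ follows verbatim from \cite{cw}, the only new ingredient being that $K_2$ here plays the role of the logarithmic kernel there, with the same sign and growth properties recorded in \eqref{eq:Gasymptotic} and Remark \ref{remark:derivativeaG}.
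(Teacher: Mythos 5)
The paper itself gives no written proof here: it simply cites \cite[Lemma 2.6]{cw} and asserts the argument adapts, and that argument is precisely the direct estimate you arrive at in your first two steps plus a truncation in the $y$-variable. Your reduction is sound: $K_2$ is bounded on $\{|x-y|<R\}$ and satisfies \eqref{eq:Gasymptotic} outside, the compact embedding of Lemma \ref{lemma:propertiesI}(i) (which includes $s=2$) gives $|u_n-u|_2\to 0$ for the whole sequence, and this kills every term except $C\,|v_n|_2|w_n|_2\int_\Omega \log(1+|y|)\,|z(y)|\,|u_n(y)-u(y)|\,dy$. Where your proposal goes astray is the dual-space detour you then take. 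As stated it does not close: you never establish the weak-$*$ convergence $g_n\to g_\infty$ in $X^*$ (this would need convergence of the products $v_nw_n$ in a suitable sense, which boundedness in $X$ does not provide), and even granting it, the pairing $\langle g_n-g_\infty,u_n\rangle$ of a weak-$*$ null sequence against a merely weakly convergent $u_n$ need not vanish --- this is exactly the same obstruction you started with, so the scheme is circular. The machinery of $g_n$, $g_\infty$, $v_\infty$, $w_\infty$ is also unnecessary.

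The delicate term is handled directly, using only that $z$ is a \emph{fixed} element of $X$: given $\varepsilon>0$ choose $M>0$ with $\int_{\{|y|>M\}}\log(1+|y|)\,z^2(y)\,dy<\varepsilon^2$. On $\{|y|\le M\}$ the weight is bounded by $\log(1+M)$, so that portion is at most $\log(1+M)\,|z|_2\,|u_n-u|_2\to 0$; on $\{|y|>M\}$ the Cauchy--Schwarz inequality with the weight $\log(1+|y|)$ gives the bound $\varepsilon\,|u_n-u|_*\le C\varepsilon$, where $C=\sup_n|u_n-u|_*<\infty$ by boundedness in $X$. Letting $n\to\infty$ and then $\varepsilon\to 0$ finishes the proof; no information on limits of $v_nw_n$ beyond boundedness of $|v_n|_2|w_n|_2$ is used. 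Note that the smallness on the tail comes from the finiteness of $|z|_*$, not from $\sup_n|u_n|_*$, which only supplies uniform boundedness. You do gesture at this truncation at the very end (``as in \cite{cw}''), but it should be applied to the term $|v_n|_2|w_n|_2\,|z|_*\,|u_n-u|_*$ directly rather than grafted onto the weak-$*$ argument; with that replacement your proof coincides with the adaptation of \cite[Lemma 2.6]{cw} the paper intends.
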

\begin{lemma}\label{lemma:PS}
The functional $\Phi:\mathcal{N}_r \to \R$ satisfies the $(PS)_c$ sequence for all $c>0$.
\end{lemma}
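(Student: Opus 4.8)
The plan is to start with a $(PS)_c$-sequence $(u_n)_n \subset \mathcal N_r$, i.e. $\Phi(u_n)\to c>0$ and $\|\Phi|_{\mathcal N_r}'(u_n)\|_{X^*}\to 0$, and first upgrade the second condition to $\Phi'(u_n)\to 0$ in $X^*$. This is the standard argument using Lagrange multipliers on the natural constraint: writing $\Phi'(u_n)=\lambda_n G'(u_n)$ modulo a small error, where $G(u):=\Phi'(u)u$, and testing against $u_n$ together with the fact that $G'(u_n)u_n=-2\|u_n\|_a^2\le -2\beta^2<0$ (by Lemma \ref{lemma:Imountaingeometry} and Lemma \ref{lemma:Nehariproperties}(ii), since $\mathcal N_r$ is bounded away from $0$), forces $\lambda_n\to 0$, hence $\Phi'(u_n)\to 0$.

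Next I would show $(u_n)_n$ is bounded in $X$. On $\mathcal N$ one has $\|u_n\|_a^2=-V_0(u_n)$, so $\Phi(u_n)=\tfrac14\|u_n\|_a^2$, and since $\Phi(u_n)\to c$ this already gives a uniform bound on $\|u_n\|_a$, hence on $\|u_n\|_{H^1(\Omega)}$. The bound on $V_2(u_n)$ then follows from $V_2(u_n)=V_0(u_n)-V_1(u_n)=-\|u_n\|_a^2-V_1(u_n)$ together with \eqref{eq:B1ineq} and the $H^1$-bound. Now apply Corollary \ref{cor:weakconvX}: either $u_n\to 0$ in $L^s(\Omega)$ for all $s\in(2,6)$, or there is $u\in X_r$ with $u_n\rightharpoonup u$ weakly in $X$. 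The first alternative is incompatible with $\Phi(u_n)\to c>0$: from \eqref{eq:estimatesGi}, $|V_1(u_n)|\le C|u_n|_{12/5}^4\to 0$, and $V_2(u_n)\ge -C|u_n|_2^4\to 0$ (by \eqref{eq:B2_lower_bound}), so $\|u_n\|_a^2=-V_0(u_n)\to 0$, contradicting $\|u_n\|_a\ge\beta$. Hence $u_n\rightharpoonup u\neq 0$ weakly in $X$, and along a subsequence $u_n\to u$ a.e. and in $L^s_{loc}(\Omega)$ for $s\in(2,6)$; by Lemma \ref{lemma:propertiesI}(i) the embedding $X\hookrightarrow L^s(\Omega)$ is compact for $s\in[2,6)$, so in fact $u_n\to u$ strongly in $L^s(\Omega)$ for all such $s$.

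It remains to promote weak convergence in $X$ to strong convergence. The idea is to test $\Phi'(u_n)-\Phi'(u)$ against $u_n-u$ and show it tends to $0$; since $\Phi'(u_n)\to 0$ in $X^*$ and $u_n-u$ is bounded in $X$, while $\Phi'(u)(u_n-u)\to 0$ by weak convergence, we get $(\Phi'(u_n)-\Phi'(u))(u_n-u)\to 0$. Expanding,
$$
(\Phi'(u_n)-\Phi'(u))(u_n-u) = \|u_n-u\|_a^2 + \big(V_0'(u_n)-V_0'(u)\big)(u_n-u),
$$
so it suffices to show the nonlinear term is $o(1)$; then $\|u_n-u\|_a\to 0$, which is equivalent to strong convergence in $H^1(\Omega)$, and combined with $|u_n-u|_*\to 0$ (obtained from the second part of Lemma \ref{lemma:B2lemma}, applied with $v_n:=u_n-u$, using that $V_2'(u_n)(u_n-u)\to 0$ and $|u_n-u|_2\to 0$) yields $u_n\to u$ strongly in $X$. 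For the nonlinear term, split $V_0'=V_1'+V_2'$. The $V_1'$ part is handled by Lemma \ref{lemma:propertiesI}(iii) ($V_1\in C^1(L^4(\Omega))$) together with the strong convergence $u_n\to u$ in $L^4(\Omega)$. The $V_2'$ part is exactly what Lemma \ref{lemma:weakconvergence} is designed for: writing $V_2'(v)(h)=4B_2(v^2,vh)$, the differences $\big(B_2(u_n^2,u_n(u_n-u))-B_2(u^2,u(u_n-u))\big)$ decompose into finitely many terms of the form $B_2(\,\text{(bounded in }X)\cdot\text{(bounded in }X),\ z(u_n-u)\,)$ with $z$ fixed, each of which vanishes by Lemma \ref{lemma:weakconvergence}.

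The main obstacle is the last step — controlling the $V_2'$ (logarithmic-kernel) term and, in particular, recovering convergence in the weighted norm $|\cdot|_*$ rather than just in $H^1$. The sign-changing, unbounded kernel $K_2$ prevents any naive compactness argument, so the proof genuinely relies on the algebraic bilinearity of $B_2$ to reduce to Lemma \ref{lemma:weakconvergence}, and on Lemma \ref{lemma:B2lemma} (whose hypothesis $u\neq 0$ is crucial, and is guaranteed here because the vanishing alternative was ruled out by $c>0$) to close the weighted norm. Everything else is a routine adaptation of the Nehari-manifold Palais-Smale machinery, with $O(2)$-symmetry supplying the compactness in Corollary \ref{cor:weakconvX} that replaces the translation compactness unavailable on the unbounded cylinder $\Omega$.
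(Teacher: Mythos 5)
Your overall skeleton (boundedness from the Nehari identity, the dichotomy of Corollary \ref{cor:weakconvX}, exclusion of vanishing via $c>0$, Lagrange multipliers to upgrade to $\Phi'(u_n)\to 0$, then strong convergence split into $B_1$ and $B_2$ parts) is the same as the paper's, but two steps fail as written. First, to rule out vanishing you claim $V_2(u_n)\ge -C|u_n|_2^4\to 0$. The vanishing alternative only gives $u_n\to 0$ in $L^s(\Omega)$ for $s\in(2,6)$; on $\mathcal N_r$ the $L^2$-norm is merely bounded, not infinitesimal, so $-C|u_n|_2^4$ does not tend to $0$ and \eqref{eq:B2_lower_bound} alone gives nothing. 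The step can be repaired, but only by letting the vanishing act inside $V_2$ itself: the near-diagonal part satisfies $\bigl|\int\int_{|x-y|<R}K_2u_n^2u_n^2\bigr|\le C\int\int_{|x-y|<R}u_n^2(x)u_n^2(y)\,dx\,dy\le C_R|u_n|_{12/5}^4\to 0$ (Young's inequality, or Lions' concentration function directly), while the far part is nonnegative since $K_2>0$ for $|x-y|\ge R$ by \eqref{eq:Gasymptotic}; hence $\liminf V_2(u_n)\ge 0$, contradicting $V_2(u_n)=-\|u_n\|_a^2+o(1)\to -4c$. (A related ordering issue: your multiplier argument bounds the error by $o(1)\|u_n\|_X$ and needs $G'(u_n)$ bounded in $X^*$, so it requires the $X$-bound on $(u_n)$, i.e. the bound on $|u_n|_*$, which only becomes available after the non-vanishing alternative via Lemma \ref{lemma:B2lemma}; the paper performs this step after extracting the weak limit.)

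Second, and more seriously, the $V_2'$ step is wrong: $B_2(u_n^2,u_n(u_n-u))-B_2(u^2,u(u_n-u))$ does \emph{not} decompose into finitely many terms of the form $B_2(v_nw_n,z(u_n-u))$ with $z$ fixed. Writing $u_n(u_n-u)=u(u_n-u)+(u_n-u)^2$ leaves the diagonal term $B_2(u_n^2,(u_n-u)^2)$, whose second argument is not $z(u_n-u)$ for any fixed $z$, so Lemma \ref{lemma:weakconvergence} does not apply to it; since you also need exactly this quantity to feed into Lemma \ref{lemma:B2lemma}, your derivation of $|u_n-u|_*\to 0$ is circular. This diagonal term is the real crux, and the paper treats it with a one-sided bound plus a sandwich: after removing the $B_1$ part by compactness and $B_2(u_n^2,u(u_n-u))=o(1)$ by Lemma \ref{lemma:weakconvergence}, one is left with $\|u_n-u\|_a^2+B_2(u_n^2,(u_n-u)^2)=o(1)$ (in the paper's version, $\|u_n\|_a^2-\|u\|_a^2+B_2(u_n^2,(u_n-u)^2)=o(1)$ with weak lower semicontinuity of the norm); since the first term is $\ge 0$ and the second is $\ge -C|u_n|_2^2|u_n-u|_2^2=o(1)$ by \eqref{eq:B2_lower_bound} and the compact embedding $X\hookrightarrow L^2(\Omega)$, both must separately tend to $0$. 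Only then does Lemma \ref{lemma:B2lemma}, with $v_n=u_n-u$ and $u_n\to u\neq 0$ a.e., yield $|u_n-u|_*\to 0$ and hence convergence in $X$. So the missing idea is precisely the sign argument for $B_2(u_n^2,(u_n-u)^2)$; bilinearity and Lemma \ref{lemma:weakconvergence} alone cannot deliver it.
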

\begin{proof}
Let $(u_n)_n \subset \mathcal{N}_r$ be a $(PS)_c$-sequence. Then, since $u_n \in \mathcal{N}$ 
$$0 = \Phi'(u_n)u_n = \| u_n \|_a^2 + V_0(u_n), $$
and this yields
\begin{equation}\label{eq:ukc}
	c + o(1) = \Phi(u_n) = \frac{\|u_n\|_a^2}{2} + \frac{V_0(u_n)}{4} = \frac{\|u_n\|_a^2}{4} + o(1) = - \frac{V_0(u_n)}{4} + o(1).
\end{equation}
Thus, $(u_n)_n$ is bounded in $H^1(\Omega)$ and $(V_0(u_n))_n$ is a bounded sequence. Further, from the estimate \eqref{eq:B1ineq} we conclude that both sequences $(V_i(u_n))_n$ are bounded. Now, from Corollary \ref{cor:weakconvX} we have that, either $u_n \to 0$ in $L^s(\Omega)$ for $s \in (2,6)$ or, passing to a subsequence, $ u_n \rightharpoonup u \neq 0 $ weakly in $X$. In the first case we see, by the compactness of the embedding $X \hookrightarrow L^{8/3}(\Omega)$, that
$$ 0 = \Phi'(u_n)u_n = \|u_n\|_{a}^2 + V_0(u_n) = \| u_n\|_X^2 + V_2(u_n) + o(1) \geq o(1), $$
so, we conclude that $\|u_k\|_X^2 = V_2(u_k) = o(1)$, which contradicts \eqref{eq:ukc}. Thereby, passing to a subsequence $u_n \rightharpoonup u$ weakly in $X$.

Next, we show that
$$ \| \Phi'(u_n) \|_{X^*} \to 0 \quad \text{ as } n \to \infty,$$
here we remark that we $\| \Phi'(u_n) \|_{X^*} \to 0$ is a statement about the full functional $\Phi : X_r \to \R$, whereas \eqref{eq:ps} holds for the restriction $\Phi:\mathcal{N}_r \to \R$.

Indeed, there are $\mu_n \in \R$ such that
$$ \Phi'(u_n)v = \langle \nabla \Phi(u_n),v \rangle_X + \mu_n J'(u_n)v,$$
where $J(u) = \| u\|_X^2 +V_0(u)$. Now, since $(u_n)_n$ is bounded in $X$, we can set $v = u_n$ to obtain
$$ 0 = \Phi'(u_n)u_n = o(1) + \mu_n J'(u_n)u_n = o(1) + \mu_n (2 \| u_n\|_X^2 + 4 V_0(u_n) ) = o(1) - 8c \mu_n.$$
It follows from this that $\mu_n = o(1)$ and this proves the claim. Now, from the weak convergence and \eqref{eq:estimatesGi} we obtain
\begin{align*}
	o(1) &= \Phi'(u_k)(u_k - u) = \int_{\Omega} a(x) u(x)(u_k-u)(x) + \nabla u(x) \cdot \nabla (u_k-u)(x) dx\\
	&\qquad + B_1(u_k^2,u_k(u_k-u)) + B_2(u_k^2,u_k(u_k-u)) \\
	&= \|u_k\|_a^2 - \|u\|_a^2 +B_2(u_k^2,u_k (u_k-u)) + o(1) = \|u_k\|_a^2 - \|u\|_a^2 +B_2(u_k^2,(u_k-u)^2) + o(1)\\
	&\geq \|u_k\|_a^2 - \|u\|_a^2 + o(1) \geq  o(1),
\end{align*}
where, in the third line we use Lemma \ref{lemma:weakconvergence} in the form
$$ B_2(u_k^2,u(u_k-u)) = o(1),$$
and in the last inequality we use \eqref{eq:B2_lower_bound} to bound $B_2(u_k^2,(u_k-u)^2)$ from below by $o(1)$.

Then, we see that $u_k \to u$ in $H^1(\Omega)$ and $B_2(u_k^2,(u_k-u)^2) \to 0$. The latter implies from Lemma \ref{lemma:B2lemma} that $u_k \to u$ in $X$ as well and this concludes the proof.
\end{proof}

Now we prove our main multiplicity result

\begin{proof}[Proof of Theorem \ref{thm:high_energy_radial}]
We define the sublevel and critical sets
$$\Phi^c := \{ u \in \mathcal{N}_r : \Phi(u) \leq c\} \qquad K_c := \{ u \in \mathcal{N}_r : \Phi(u) = c, \, \Phi'(u) = 0\}.$$

Now, we consider the following \textit{Ljusternik-Schnirelmann values} for the restriction $\Phi : \mathcal{N}_r \to \R$
$$ c_k := \inf\{ c > 0 : \gamma(\Phi^c) \geq k\},$$
where $\gamma$ denotes the \textit{Krasnoselskii genus}, see \cite{s}. Since $\Phi$ is bounded from below over $\mathcal{N}$, see Lemma \ref{lemma:Nehariproperties}, we conclude that $c_k \geq 0$

Now, we consider the family of sets
$$ \Sigma_k := \{ Z \subset \mathcal{N} : Z \text{ is symmetrical, compact and } \gamma(Z) \geq k\} $$
To see that this set is nonempty,  we can fix $r > 0$ small enough that $K(x,y) < 0$ for all $|x-y|<r$; this is made possible by the asymptotic description in Theorem \ref{thm:asymtoticsGreen}. Then we find a family $\{\varphi_j \}_{j = 1}^{k+1} \subset C^{\infty}_c(B_r(0)) \cap X_0$ of functions with pairwise disjoint supports. Observe that $V_0(\sum_j s_j \varphi_j) < 0$ for all convex combinations $\sum_j |s_j| = 1$, since any one of these is still compactly supported in $B_r(0)$. Now, define
$$S := \left\{ \sum_j s_j \varphi_j : \sum_j |s_j| = 1 \right\}.$$
Since this set is isomorphic to $\mathbb{S}^{k}$ we have $\gamma(S) = k$. Now define
$$ Z:= \left\{  t_u u \in \mathcal{N} : u \in S \right\},$$
where $t_u > 0$ is as in Lemma \ref{lemma:Nehariproperties}. It follows that $\gamma(Z) \geq \gamma(S) = k$, thereby $Z \in \Sigma_k$, and, since the set $Z \in \Sigma_k$ constructed above is compact, we see that 
$$ c_k \leq \sup_{u \in Z} \Phi(u)  < \infty.$$
Then, since $\Phi$ satisfies the $(PS)_c$-property for all $c>0$, arguing as in \cite[Chapter II, Theorem 4.2]{struwe}, see also \cite[Proposition 2.10]{ccw}, we see that $c_k$ is a critical point for all $k \in \mathbb{N}$.

Further, since $\Sigma_k \neq \emptyset$ for all $k \in \mathbb{N}$, we conclude that
$$ \lim_{c \to \infty} \gamma(\Phi^c) = \infty,$$
and following a standard argument we see that $c_k \to \infty$. For the ease of the reader we present a brief version here. Suppose by contradiction that $c_k \to c_\infty < \infty$ as $k \to \infty$. From Lemma \ref{lemma:PS}, it follows that $K_{c_\infty}$ is compact and then there is an open neighborhood $K_{c_\infty} \subset N \subset X_r$ such that $\gamma(N) < \infty$. Then, from the deformation lemma for Palais-Smale sequences, see \cite[Chapter II, Theorem 3.11]{struwe}, we obtain $\varepsilon > 0$ and an odd and continuous map $\eta : \mathcal{N}_r \to \mathcal{N}_r$ such that $\eta(\Phi^{c_\infty+\varepsilon} \setminus N) \subset \Phi^{c_\infty-\varepsilon}$ and $\eta(u) = u$ for all $u \notin \Phi^{-1}([c-2\varepsilon,c+2\varepsilon])$. Then, since $c_\infty - \varepsilon < c_k$ for $k$ large enough, we have 
$$\gamma( \eta(\Phi^{c_\infty+\varepsilon} \setminus N) ) \leq \gamma(\Phi^{c_\infty-\varepsilon}) < k,$$
and from the subadditivity and monotonicity of the genus, we conclude that
$$ \gamma(\Phi^{c_\infty + \varepsilon}) \leq  \gamma( \Phi^{c_\infty+\varepsilon} \setminus N ) + \gamma(N) \leq \gamma( \eta(\Phi^{c_\infty+\varepsilon} \setminus N) ) + \gamma(N) < k + \gamma(N) < \infty,$$
which yields a contradiction.

Thus, the values $c_k$ yield the solution pairs $\{\pm u_n\} \subset X_r$. To conclude, we observe from the definition of the Ljusternik-Schnirelmann values that
$$ c_1 = \inf_{\mathcal{N}_r} \Phi, $$
and, since $\mathcal{N}_r$ contains all critical points that are radially symmetric in the $x'$-variable, it follows that $\pm u_1$ is a radial ground state to \eqref{eq:choquard}. The regularity properties of the periodic extensions $\{\pm \overline{u}_n\}_n$ follow from Theorem \ref{thm:solsystem}.
\end{proof}

\section{Fully nontrivial solutions}\label{constant_potentials}
In this section, we focus on the structure arising when $a$ is constant along the $x_3$-variable. We prove that radial ground states of \eqref{eq:periodic_choquard} must be fully nontrivial for large $\ell$. Furthermore, we prove an analogue version of Theorem \ref{thm:high_energy_radial}, which produces fully nontrivial solutions for all $\ell > 0$ in this particular case.

From this point on we will assume at all times that $\partial_3 a(x) \equiv 0$, $\einf_{\Omega} a > 0$ and
$$ a(Ax',x_3) = a(x',x_3) \quad \text{a.e. in } \Omega \text{ for all } A \in O(2). $$
To simplify the notation whenever necessary, we write $f(x',x_3) = f(x')$, for a function $f:\R^3 \to \R$ which does not depend on the $x_3$-variable. 

In this case, we obtain solutions to \eqref{eq:SPsystem} from the planar Choquard equation
\begin{align}\label{eq:Chocplanar}
	-\Delta u + a(x') u + (\log|\cdot| \ast u^2)u = 0 \quad x' \in \R^2,
\end{align} 
by extending trivially to $\R^3$. The equation \eqref{eq:Chocplanar} has a variational structure similar to the periodical case \eqref{eq:choquard}. We only recall here the definition of energy functional associated to it
$$ \Psi(u) := \frac{1}{2} \int_{\R^2} a(x') u(x')^2 + |\nabla u(x')|^2 dx' + \frac{1}{4} \int_{\R^2}\int_{\R^2} \log|x'-y'|u^2(x')u^2(y')dy'dx', $$
defined in the space
$$ B := \left\{ u \in H^1(\R^2) : \int_{\R^2} \log(1+|x|)u^2(x) dx < \infty \right\}. $$
We consider the radial ground state energy level
$$\kappa:=\inf\{\Psi(u): B \setminus \{0\}, u \text{ is radially symmetric and } \Psi'(u)=0\} >0, $$
see \cite[Theorem 1.2]{ccw} for more details on this topic.

Then, we prove the following
\begin{lemma}\label{lemma:solconstant}
	Let $u \in X$ be a weak solution to \eqref{eq:choquard} such that $\partial_3 u =0$ in $\Omega$. Then the function $u(x',x_3)= u(x')$ is a solution to \eqref{eq:Chocplanar}.
\end{lemma}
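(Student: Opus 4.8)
The plan is to reduce matters to the already-established identity of Theorem~\ref{thm:green2d}, which collapses the three-dimensional periodic kernel to the planar logarithmic kernel, and then to test the weak formulation of \eqref{eq:choquard} against functions that are constant in $x_3$. First I would record that, since $\partial_3 u \equiv 0$, the function $u$ depends only on $x'$, so writing $u(x)=u(x')$ is unambiguous, and I would check the two-way correspondence between admissible test functions: on the one hand $u \in B$, since $u \in H^1_p(\Omega)$ with $\partial_3 u\equiv 0$ gives $\|u\|_{H^1(\Omega)}^2 = 2\ell\,\|u\|_{H^1(\R^2)}^2<\infty$, and $|x|\ge|x'|$ yields $\int_{\R^2}\log(1+|x'|)u^2(x')\,dx' \le \tfrac{1}{2\ell}|u|_*^2<\infty$; on the other hand, for any $\psi\in B$ the trivial extension $\varphi(x',x_3):=\psi(x')$ lies in $X$, because it is periodic in $x_3$, belongs to $H^1(\Omega)$, and since $|x_3|\le\ell$ one has $\log(1+|x|)\le C(1+\log(1+|x'|))$ on $\Omega$, so $|\varphi|_*<\infty$.

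Next, by Theorem~\ref{thm:green2d} together with the fact that $u^2$ is independent of $y_3$,
\[
\cK[u^2](x)=\frac{1}{2\ell}\int_\Omega \log|x'-y'|\,u^2(y')\,dy=\int_{\R^2}\log|x'-y'|\,u^2(y')\,dy'=(\log|\cdot|\ast u^2)(x')
\]
for a.e.\ $x\in\Omega$. I would then insert $\varphi(x',x_3)=\psi(x')$, $\psi\in B$ arbitrary, into the weak formulation of \eqref{eq:choquard}. Each of the three terms factors a $2\ell$ out of the $x_3$-integration: the gradient term becomes $2\ell\int_{\R^2}\nabla u\cdot\nabla\psi\,dx'$ since $\partial_3 u=\partial_3\varphi=0$; the linear term becomes $2\ell\int_{\R^2}a(x')u\psi\,dx'$ using $\partial_3 a\equiv 0$; and, after using the integrability behind Lemma~\ref{lemma:propertiesI}(ii) (so that Fubini applies and the double integral equals $\int_\Omega \cK[u^2](x)\,u(x)\,\varphi(x)\,dx$), the nonlocal term becomes $2\ell\int_{\R^2}(\log|\cdot|\ast u^2)(x')\,u(x')\,\psi(x')\,dx'$ by the displayed identity. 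Dividing through by $2\ell$ gives precisely the weak formulation of \eqref{eq:Chocplanar} tested against $\psi$; since $\psi\in B$ was arbitrary, $u$ is a weak solution of \eqref{eq:Chocplanar}, i.e.\ a critical point of $\Psi$ (and elliptic regularity, as in the proof of Theorem~\ref{thm:solsystem}, then upgrades it to a classical solution).

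The argument is essentially bookkeeping once Theorem~\ref{thm:green2d} is in hand, so there is no serious obstacle. The two points that deserve a line of care are (a) the membership checks $u\in B\Leftrightarrow \varphi=\psi\circ\pi_{x'}\in X$, which is the weighted-integrability computation above and relies on $|x_3|\le\ell$ being bounded, and (b) the interchange of integration order in the nonlocal term, which is already contained in the well-definedness and $C^2$-regularity of $B_0$ on $X$ coming from the estimates \eqref{eq:estimatesGi}. The genuine content has been done earlier: Theorem~\ref{thm:green2d} is what makes the planar logarithmic kernel appear in place of the three-dimensional periodic kernel $K$.
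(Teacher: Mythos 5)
Your proposal is correct and follows essentially the same route as the paper: invoke Theorem~\ref{thm:green2d} to collapse $\cK[u^2]$ to the planar logarithmic convolution, test the weak formulation of \eqref{eq:choquard} against trivial extensions of planar test functions, and factor out $2\ell$ from the $x_3$-integration. Your added membership checks ($u\in B$ and the trivial extension of $\psi\in B$ lying in $X$) are a slightly more careful version of what the paper does implicitly with $\psi\in C^\infty_c(\R^2)$, but the substance is identical.
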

\begin{proof}
Let $\varphi \in C^{\infty}_c(\R^2)$ and consider its trivial extension to $\R^3$. Then, we have $\varphi \in X$ and thus, from Theorem \ref{thm:green2d}
	\begin{align*}
		0 &=  \int_\Omega a(x)u(x) \varphi(x) + \nabla u(x) \cdot \nabla \varphi(x) dx + \int_{\Omega} \int_{\Omega} K(x,y)u^2(y)u(x)\varphi(x) dx dy,\\
		&= 2\ell \int_{\R^2}a(x')\tilde{u}(x')\tilde{\varphi}(x') + \nabla \tilde{u}(x') \cdot \tilde{\varphi}(x')dx'  + \int_{\Omega} \int_{\Omega} \frac{\log|x'-y'|\tilde{u}^2(y')}{2\ell} u(x')\varphi(x') dy' dx'\\
		&= 2\ell \int_{\R^2}a(x')\tilde{u}(x')\tilde{\varphi}(x') + \nabla \tilde{u}(x') \cdot \tilde{\varphi}(x')dx'  +2\ell \int_{\R^2}  \int_{\R^2} \log|x'-y'|  \tilde{u}^2(y')\tilde{u}(x')\tilde{\varphi}(x') dx' dy'
	\end{align*}
	Then, it follows that $u(x',x_3) = u(x')$ is a weak solution to the equation \eqref{eq:Chocplanar}.
\end{proof}
To make the dependence on $\ell$ explicit in the following proof, we write $\Omega_\ell := \R^2 \times (-\ell,\ell)$, and $K(\cdot,\cdot;\ell)$ for the Green function of \eqref{eq:poisson_per} on $\Omega_\ell$. For the energy functional, we set
$$ \Phi_\ell (u) = \frac{1}{2} \int_{\Omega_\ell} a(x) u^2 + |\nabla u|^2 + \frac{1}{4} \int_{\Omega_\ell} \int_{\Omega_\ell} K(x,y;\ell)u^2(x)u^2(y)dxdy, $$
and we set $\mathcal{N}_\ell$ for the Nehari manifold associated to $\Phi_\ell$.
\begin{proof}[Proof of Theorem \ref{thm:fully_nontrivial}]
	Let $\varphi \in C^{\infty}_c(\R^3) \cap X_r$ be fixed and $\ell_* >0$ large enough that $\text{supp}(\varphi) \subset \Omega_\ell$ for $\ell > \ell_*$. Then, applying Theorem \ref{thm:Green3dim} we have 
	\begin{align*}
		\int_{\Omega_\ell} \int_{\Omega_\ell} K(x,y;\ell)\varphi^2(x)\varphi^2(y)dxdy \to  -\frac{1}{4 \pi} \int_{\R^3} \int_{\R^3}  \frac{\varphi^2(x) \varphi^2(y)}{|x-y|}dx dy < 0
	\end{align*}
	Now, recall from Lemma \ref{lemma:Nehariproperties} that $t_\ell \varphi \in \mathcal{N}_\ell$ if and only if
	$$ t_\ell^{-2} = -\frac{1}{\|\varphi\|_a^2} \int_{\Omega_a} \int_{\Omega_\ell} K(x,y;\ell)u^2(x)u^2(y)dxdy.$$
	So we have
	$$ t_\ell^{-2} \to \frac{1}{4 \pi \|\varphi\|_a^2} \int_{\R^3} \int_{\R^3}  \frac{\varphi^2(x) \varphi^2(y)}{|x-y|}dx dy =: \frac{\mu}{4 \pi \| \varphi\|_a^2} >0 $$
	From this we obtain
	\begin{align*}
		\Phi_\ell(t_\ell \varphi) =  \frac{1}{4} \|t_\ell \varphi\|_a^2 \to \pi \frac{\|\varphi\|_a^4}{\mu} >0,
	\end{align*}
	as $\ell \to \infty$. Then we may find $\ell_*>0$ large enough that
	\begin{equation}\label{eq:upper_bound}
		\inf_{\mathcal{N}_\ell \cap X_r} \Phi_\ell \leq \Phi_\ell (t_\ell \varphi) \leq \frac{2 \pi \|\varphi\|_a^4}{\mu} \quad \text{for } \ell > \ell_*. 
	\end{equation}
	Now, for $\ell > \ell_*$, we let $u_g \neq 0$ be a radial ground state, whose existence is guaranteed by Theorem \ref{thm:high_energy_radial}. If $u_g$ satisfies $\partial_3 u_g = 0$, then, by Lemma \ref{lemma:solconstant} then $u_g$ yields a solution to \eqref{eq:Chocplanar} which implies
	\begin{align*}
		\Phi_\ell(u_g) &= \frac{1}{2} \int_{\Omega_\ell} a(x) u_g^2(x) + |\nabla u_g(x)|^2 \, dx + \frac{1}{4} \int_{\Omega_\ell} \int_{\Omega_\ell} K(x,y;\ell)u_g^2(x)u_g^2(y)dxdy \\
		&\quad = \frac{2\ell}{2} \int_{\R^2} a(x') u_g(x')^2 + |\nabla u_g(x')|^2 dx' + \frac{2\ell}{4} \int_{\R^2}\int_{\R^2} \log|x'-y'|u_g^2(y')u_g(x')dy'dx'\\
		&\quad= 2\ell \Psi(u_g) \geq 2\ell \kappa.
	\end{align*}
	This yields a contradiction with \eqref{eq:upper_bound} by choosing $\ell_*$ large enough that $\ell_* \kappa > \pi \|\varphi\|_a^4/\mu $. Then $\partial_3 u_g \neq 0$ for $\ell > \ell_*$ as claimed.
\end{proof}

To close this section, we observe that, in this case, out of all the solution pairs provided by Theorem \ref{thm:high_energy_radial}, we can only guarantee that $\pm u_1$ is a fully nontrivial solution, as we proved above.

This can be remedied by slightly changing our setting. Consider the isometric transformation $\inv : H^1_p(\Omega) \to H^1_p(\Omega)$ given by
\begin{equation}\label{eq:action_inv}
	(\inv \ast u)(x) = -T_{(0,\ell)} u (x)=-\overline{u}(x',x_3-\ell)  \qquad x \in \Omega,
\end{equation}
where $T$ is the translation operator \eqref{eq:translation_op}. Observe that, since $u \in H^1_p(\Omega)$ then $\inv^2 = \id_X$. Then let $G$ be the group generated by $O(2)$ and $\inv$ which acts on $X$ according to \eqref{eq:action_O2} and \eqref{eq:action_inv} and consider the subspace of $G$-invariant functions
$$ X_G := \{ u \in X : g \ast u = u \quad \text{for all } g \in G\}.$$
We conclude with the following
\begin{proof}[Proof of Theorem \ref{thm:high_energy_nontrivial}]
	Observe that if $\partial_3 a \equiv 0$ then the functional $\Phi$ is $G$-invariant and a critical point $u \in X_G$ of the functional $\Phi: X \to \R$ is a fully nontrivial solution to \eqref{eq:choquard}, since
	$\overline{u}(x',x_3- \ell) = -u(x',x_3)$ a.e. in $\Omega$. Moreover, the analogous version of the principle of symmetric criticality, Lemma \ref{lemma:symmetriccriticality}, holds for the $G$-action. Then, we reproduce the proofs in Section \ref{multiplicity} to obtain the solutions pairs $\{\pm u_n\}_n$ in the $G$-invariant space $X_G$, where $\Phi(\pm u_1) = c_G$. The regularity of the extensions $\{ \pm \overline{u}_n\}_n$ is obtained from Theorem \ref{thm:solsystem}.
\end{proof}

\textbf{Acknowledgments}. This research was done with the full support of DAAD-\textit{Deutscher Akademischer Austauschdienst} (Germany) and the author wishes to express his gratitude with this institution. He would also like to thank Prof. Dr. Tobias Weth for the helpful discussions that made this work possible.

\medskip

\begin{flushleft}
	\textbf{Omar Cabrera Chavez}\\
	Institut für Mathematik\\
	Goethe Universität Frankfurt am Main\\
	D-60054 Frankfurt am Main, Germany\\
	\texttt{cabrera@math.uni-frankfurt.de} 
\end{flushleft}

\end{document}